\newtheorem{theorem}{Theorem}[section]
\newtheorem{proposition}[theorem]{Proposition}
\newtheorem{lemma}[theorem]{Lemma}
\newtheorem{corollary}[theorem] {Corollary}
\newtheorem*{claim*}{Claim}
\theoremstyle{remark}
\newtheorem{remark}[theorem]{Remark}
\newtheorem*{terminology}{Terminology}
\theoremstyle{definition}
\newtheorem{definition}[theorem]{Definition}
\newtheorem{notation}[theorem]{Notation}
\newcommand{\tn}[1]{\textnormal{#1}}
\newcommand{\lmod}[1]{#1\textnormal{-Mod}}
\newcommand{\Hom}{\textnormal{Hom}}
\newcommand{\rar}{\rightarrow}
\newcommand{\id}{\textnormal{id}}
\newcommand{\smsq}{{\mathbin{\vcenter{\hbox{\scalebox{0.45}{$\bigstar$}}}}}}
\newcommand{\vna}{\mathbf{vNa}}
\newcommand{\cvna}{\mathbf{CvNa}}
\newcommand{\cofu}[1]{\mathop{\boxtimes}_{#1}}
\newcommand{\tens}[1]{\mathop{*}\limits_{#1}}
\newcommand{\Ltwomap}{\mathfrak{s}}
\newcommand{\op}{\tn{op}}
\newcommand{\pull}[1]{#1^\smsq}
\newcommand{\push}[1]{#1_\smsq}
\newcommand{\vn}[1]{{#1}}
\newcommand{\res}[1]{#1^{\tikz[baseline=-1]{
\draw (0,0) -- (90:.07) (0,0) -- (162:.07) (0,0) -- (234:.07) (0,0) -- (306:.07)(0,0) -- (378:.07);
}}}
\newcommand{\vnres}[1]{\res{\vn{#1}}}
\newcommand{\ind}[1]{#1_{\tikz[baseline=-1]{
\draw (0,0) -- (90:.07) (0,0) -- (162:.07) (0,0) -- (234:.07) (0,0) -- (306:.07)(0,0) -- (378:.07);
}}}
\newcommand{\vnind}[1]{\ind{\vn{#1}}}
\newcommand{\ms}{(X,\mu)}
\newcommand{\vnmap}{\vn{f}\colon B \rar A}
\newcommand{\srmu}{\sqrt{\mu}}
\newcommand{\homtocond}[1]{#1^2}
\newcommand{\Wstarcat}{W^*\tn{-}\mathbf{Cat}}
\newcommand{\symWcat}{W^*\tn{-}\mathbf{SymTensCat}}
\newcommand{\xRightarrow}[2][]{\ext@arrow 0359\Rightarrowfill@{#1}{#2}}
\title{A three-functor formalism for commutative von Neumann algebras}
\author{Andr\'e G. Henriques and Thomas A. Wasserman}   
\begin{document}

\begin{abstract} 
A three-functor formalism is the half of a six-functor formalism that supports the projection and base change formulas. In this paper, we provide a three-functor formalism for commutative von Neumann algebras and their modules. Using the Gelfand-Naimark theorem, this gives rise to a three-functor formalism for measure spaces and measurable bundles of Hilbert spaces. We use this to prove Fell absorption for unitary representations of measure groupoids.

The three-functor formalism for commutative von Neumann algebras takes values in $\mathrm{W}^*$-categories, and we discuss in what sense it is a unitary three-functor formalism.
\end{abstract}


\maketitle
\setcounter{tocdepth}{1}
\tableofcontents

\section{Introduction}

Let $\mathcal C$ be a `category of spaces' (such as the category of varieties over a field, or the category locally compact Hausdorff spaces), and let us assume that we are given for every object $X\in\mathcal C$ a `category of sheaves' on $X$ (typically some kind of derived category), denoted $\mathcal D_X$.
A \emph{six-functor formalism} assigns to every morphism $f:X\to Y$ in $\mathcal C$ a pair of adjoint functors $f^*:\mathcal D_Y\to \mathcal D_X$ and $f_*:\mathcal D_X\to \mathcal D_Y$, and to certain morphisms $f:X\to Y$ in $\mathcal C$ a pair of adjoint 
functors $f_!:\mathcal D_X\to \mathcal D_Y$ and $f^!:\mathcal D_Y\to \mathcal D_X$.
For every $X\in \mathcal C$, the category $\mathcal D_X$ is moreover equipped with a tensor product $\otimes$ that is preserved by each $f^*$, and has an inner-hom $\underline\Hom(-,-)$.
These functors satisfy compatibility conditions known as projection and base change.
The `six functors' of Grothendiek are $f^*$, $f_!$, $\otimes$, along with their right adjoints $f_*$, $f^!$, $\underline{\Hom}$.

When only $f^*$, $f_!$, $\otimes$ are present, this is called a  \emph{three-functor formalism}. 
The goal of this paper is to establish a three-functor formalism for the category of measure spaces and measurable maps (the opposite of the category of commutative von Neumann algebras).
The category $\mathcal D_X$ associated to a measure space $X$ is the category of measurable bundles of Hilbert spaces over $X$.
Equivalently, $\mathcal D_X$ is the category of representations of the commutative von Neumann algebra $L^\infty(X)$.

\subsection{Six-functor formalisms -- short historical overview.}
The formalism of six functors was originally introduced by Grothendieck and collaborators in the context of \'etale $\ell$-adic sheaves \cite{Grothendieck1977}, building on the development of \'etale cohomology \cite{Deligne1976}. By now it is recognised to exist in many contexts, among which: sheaves on locally compact Hausdorff spaces \cite{Leray1950,Borel1957,Deligne1976,Iversen1986, Kashiwara1990,Volpe2021}, $D$-modules \cite{Bernstein2007,Scholze2022}, quasi-coherent sheaves \cite{Lipman2009}, and motives \cite{Ayoub2007,Cisinski2019}.

Despite this ubiquity, the abstract notion of a six-functor formalism has only recently crystallised. A discussion of aspects of abstract six-functor formalisms first appears in \cite{Fausk2003,Joshua2003}. Formulations in terms of categories of correspondences were more recently discussed in \cite{Hoermann2018} and \cite{Gaitsgory2017}.
At last, general construction techniques of six-functor formalisms were studied and used in \cite{Liu2012,Mann2022}, and later expanded in \cite{Scholze2022,Heyer2024}.

As explained in \cite{Scholze2022}, a three-functor formalism is a symmetric monoidal functor out of a category of correspondences (or spans) in $\mathcal{C}$.\footnote{We will see later that our setup unfortunately doesn't allow for this particular way of packaging the definition. See Remark~\ref{rem 2.5}.}
This also captures the compatibility constraints between the three functors.
A six-functor formalism is a three-functor formalism in which the three functors $f^*$, $f_!$, $\otimes$ admit right adjoints.

\subsection{Measure spaces and commutative von Neumann algebras}

Here and below, a \emph{measure space} will refer to a pair $(X,[\mu])$, where $X$ is a set equipped with a $\sigma$-algebra, and $[\mu]$ is an equivalence class of measures on $X$ where two measures are deemed equivalent if they are absolutely continuous with respect to each other (they share the same null-sets).
All measure spaces will be assumed to be 
\emph{standard measure spaces}, meaning they have at most countably many atoms, and their non-atomic part is either empty, or isomorphic to $\mathbb{R}$.
A \emph{measurable map} $f:X\to Y$ between measure spaces is an equivalence class of measurable functions from $X$ to $Y$, where two functions $X\to Y$ are deemed equivalent if they agree outside of a measure zero set.

Given a measure space $X$, we write $L^\infty (X)$ for the associated commutative von Neumann algebra.
The Gelfand-Naimark theorem for measure spaces states that
the assignment $X\mapsto L^\infty(X)$ is an equivalence of categories between the category of standard measure spaces and measurable maps, and the opposite of the category of separable\footnote{A von Neumann algebra is called `separable' if it can be faithfully represented on a separable Hilbert space.} commutative von Neumann algebras and normal $*$-homomorphisms.

Unfortunately,
the correspondence provided by the Gelfand-Naimark theorem only works well in the presence of separability assumptions.\footnote{See \cite{Pavlov2022} for the current most general version of the Gelfand-Naimark theorem. The notion of measure space used in loc.~cit.~is however not well developed yet.}
In this paper, we construct our three-functor formalism purely in the language of commutative von Neumann algebras, which allows us to avoid any separability assumptions.

\subsection{Three-functor formalism for measure spaces}\label{sec:threeffmeasure}
We briefly describe our three-functor formalism from the point of view of measure spaces. (Later in this paper, we will be working purely in the language of commutative von Neumann algebras.)

Given a measure space $X$, 
the corresponding `category of sheaves' $\mathcal D_X$ is the category of measurable bundles of Hilbert spaces over $X$ (also called measurable fields of Hilbert spaces) and measurable bounded linear maps. 
This category is symmetric monoidal under the operation $\otimes_X$ of fiberwise tensor product of Hilbert spaces. 

The functor
\[
H\mapsto
L^2\big(X,H\otimes \Omega^{\frac{1}{2}}_X\big)
\]
which sends a measurable bundle $H$ over $X$ to the space of $L^2$ sections of $H\,{\otimes}\,\Omega^{\frac{1}{2}}_X$ provides an equivalence of categories between measurable\vspace{-.7mm} bundles over $X$ and normal representations of $L^\infty(X)$.
Here, $\Omega^{\frac{1}{2}}_X$ denotes the \emph{bundle of half-densities} on $X$, which we explain below.

Given a measure space $(X,[\mu])$, let $\underline{\mathbb C}_X$ denote the trivial line bundle over $X$.
For every $p\in \mathbb R$, there exists a canonical measurable line bundle $\Omega^p_X$ on $X$ called the bundle of $p$-densities. A measure $\mu$ on $X$ in the equivalence class $[\mu]$ provides a trivialisation of this bundle, denoted $(d\mu)^p: \underline{\mathbb C}_X \to \Omega^p_X$, and two such trivialisations are related by $(d\nu)^p = (d\mu)^p \circ (\tfrac{d\nu}{d\mu})^p$, where
$\tfrac{d\nu}{d\mu}$ is the Radon–Nikodym derivative.
For every $q\in \mathbb R$, the assignment $f\mapsto |f|^2$ defines a map from the space of sections of $\Omega_X^q$ to the space of sections of $\Omega_X^{2q}$.
Along with the canonical integration map $\int:\Gamma(X,\Omega^1)\to \mathbb C$,
this yields a measure-free description of the $L^2$-space of $X$:
\[
L^2(X) = \Big
\{f\in\Gamma\big(X,\Omega^{\frac{1}{2}}\big)\,:\,\int |f(x)|^2 <\infty\Big\}.
\]
If $H$ is a measurable bundle of Hilbert spaces over $X$, then the associated representation of $L^\infty(X)$ is given by the space of $L^2$ sections of 
$H\otimes \Omega^{\frac{1}{2}}$:
\[
L^2\big(X,H\otimes \Omega^{\frac{1}{2}}\big) := \Big\{f\in\Gamma\big(X,H\otimes \Omega^{\frac{1}{2}}\big)\,:\,\int \|f(x)\|^2 <\infty\Big\}.
\]

Given a measurable map $f\colon X\rar Y$, we now describe the associated pullback and pushforward functors. For reasons that will become apparent in the next section, we choose to denote them $\pull{f}$ and $\push{f}$ as opposed to the more standard $f^*$ and $f_!$.
The pullback functor $\pull{f}:\mathcal D_Y\to \mathcal D_X$ is easy to describe in the bundle perspective: it is given by pulling back measurable bundles over $Y$ along the measurable map~$f$.  This contravariantly corresponds to induction from $L^\infty(Y)$ to $L^\infty(X)$ (see \S\ref{sec:the three functors} for details of that construction). On the other hand, the pushforward  $\push{f}:\mathcal D_X\to \mathcal D_Y$ is easier to describe in the $L^\infty(X)$-module perspective: it corresponds to restriction of scalars along the homomorphism $L^\infty(Y)\to L^\infty(X)$.
On the side of measurable bundles, $\push{f}$ takes a measurable bundle $H$ on $X$ to the measurable bundle on $Y$
whose fiber over $y\in Y$ is given by
\[
L^2\big(X_y,H|_{X_y}\otimes \Omega_{X_y}^{\frac{1}{2}}\big).
\]
Here, $X_y:=f^{-1}(y)$ denotes the fiber of $X$ over $y$.

\subsection{Curiosities of our three-functor formalism}

The three-functor formalism 
constructed in this paper 
does not extend to a six-functor formalism in the sense of \cite{Scholze2022}.
Nevertheless, it behaves in many ways as a six-functor formalism: its covariant functors $\push{f}$ behave simultaneously like $f_*$ and $f_!$, and its contravariant functors $ \pull{f}$  behave simultaneously like $f^*$ and $f^!$. Let us explain:

A feature of six-functor formalisms which typically does not occur in a three-functor formalism is that each category $\mathcal D_X$ comes equipped with a (Verdier) duality functor
$\mathbb D_X:\mathcal D_X\to \mathcal D_X^{op}$
satisfying
\[
f_*(\mathbb D_X\mathcal F) \cong 
\mathbb D_Y(f_!\mathcal F)
\qquad
\text{and}
\qquad
f^!(\mathbb D_Y\mathcal G) \cong 
\mathbb D_X(f^*\mathcal G).
\]
Peculiarly, our three-functor formalism does support duality functors $\mathbb D$ as above, but with isomorphisms
\[
\push{f}(\mathbb D_X M) \cong 
\mathbb D_Y(\push{f} M)
\qquad
\text{and}
\qquad
\pull{f}(\mathbb D_YN) \cong 
\mathbb D_X(\pull{f}N).
\]
Another factoid which supports the claim that $\push{f}$ behaves like both $f_*$ and $f_!$, and that $ \pull{f}$  behaves like both $f^*$ and $f^!$, is that 
the functors $\push{f}$ and $\pull{f}$ are `adjoint' in the sense of \cite[Def~4.14]{Henriques2024}.
This is an exotic notion of adjoint which only makes sense for functor between $\mathrm{W}^*$-categories. It is ambidextrous, and does not agree with the usual notion of adjoint functor from category theory.

There is also a sense in which the functor $\pull{f}$ is `half-way' between $f^*$ and $f^!$, and $\push{f}$ is `half-way' between $f_*$ and $f_!$ (and the monoidal structure $\boxtimes$ on $\mathcal D_X$ is half-way between $\otimes$ and $\underline{\Hom}$). One sees this as follows:

If $p:X\to *$ is the map to the terminal object then, in a usual six-functor formalism, the object $p_*p^*\mathbb I$ behaves like some kind of space of `functions on X'. The unit of the adjunction $p_* \dashv p^*$ then provides a map
$\mathbb I\to p_*p^*\mathbb I$ which we may interpret as the constant function 1.
Similarly, $p_!p^!\mathbb I$ behaves like a space of `densities on X', and the counit of the adjunction $p^! \dashv p_!$ provides a map $p_!p^!\mathbb I \to \mathbb I$
which we may interpret as integrating a density:
\begin{equation}
\label{eq: function vs densities}
\begin{split}
p_*p^*\mathbb I \,= \text{`functions on $X$'}\,\\
\,p_!\;\!p^!\;\!\mathbb I \,=\, \text{`densities on $X$'}.
\end{split}
\end{equation}
Our three-functor formalism supports neither 
$\mathbb I\to \push{p}\pull{p}\mathbb I$,
nor $\push{p}\pull{p}\mathbb I\to \mathbb I$.
Instead, $\push{p}\pull{p}\mathbb I=L^2(X)$ is the space of $L^2$ sections of the bundle of half-densities on the measure space $X$:
\[
\,\,\push{p}\pull{p}\mathbb I \,=\, \text{half-densities on }X,
\]
which is `exactly halfway' between the two statements in \eqref{eq: function vs densities}.

\subsection{Outline}
This paper is organised as follows. In Section~\ref{sec:threeff} we give the definition of a three-functor formalism. In Section~\ref{sec:commvn} we discuss preliminairies on commutative von Neumann algebras. We then set up the three-functor formalism in Section~\ref{sec4}. We end the paper by giving an application of the three-functor formalism of measure spaces to prove Fell absorption for representation of measure groupoids in Section~\ref{sec:fellabs}.
\bigskip

{\small \noindent{\it Open access:} For the purpose of Open Access, the authors have applied a CC BY public copyright
licence to any Author Accepted Manuscript (AAM) version arising from this submission.}

\section{Three-functor formalisms}\label{sec:threeff}

A three-functor formalism is defined with respect to a reference ``category of spaces'' $\mathcal{C}$. 
For us, the relevant category of spaces $\mathcal{C}$ will be be the category of measure spaces and measurable maps. More precisely, it will be the opposite of the category of commutative von Neumann algebras.
In the later sections, everything will be phrased in the language of commutative von Neumann algebras.

\subsection{Definition of three-functor formalisms}
We begin by spelling out the extra structures that our category of spaces $\mathcal C$ needs to be equipped with:

\begin{definition}
    A \emph{category with exceptional morphisms and distinguished squares} is a category $\mathcal{C}$ equipped with:
\begin{itemize}
    \item A collection $\mathcal{E}$ of morphisms of $\mathcal{C}$ called \emph{exceptional morphisms}, which is closed under composition and contains all isomorphisms (so, in particular, $\mathcal{E}$ is a wide subcategory of $ \mathcal{C}$).
    \item
    A collection $\mathcal{S}$ of commuting squares
\begin{equation}\label{eq:distinguishedsquare}
	\begin{tikzcd}
		W \arrow[d,"\bar{g}"] \arrow[r,"\bar{f}"] &Y\arrow[d,"g"] \\
		X \arrow[r,"f"] & Z
	\end{tikzcd},
\end{equation}
with $g,\bar{g}\in \mathcal{E}$, 
called \emph{distinguished squares} which is closed under the operation of gluing two squares along an edge, and has the property that squares with $f=\bar{f}=\id $ or $g=\bar{g}=\id$ are in $\mathcal{S}$.
We also require that if $f\in\mathcal E$, then
the square obtained by swapping the roles of $X$ and $Y$ also be distinguished.
\end{itemize}
\end{definition}

\begin{remark}
All morphisms will be exceptional in our paper. We include the distinction in our definition to ensure that our definition of three-functor formalism accommodates examples from algebraic geometry.
\end{remark}

The instance of the above definition
that will be relevant to this paper is the category $\mathcal C$ of measure spaces and measurable maps
(or rather, the opposite of the category of commutative von Neumann algebras), with $\mathcal E=\mathcal C$,
and the distinguished squares given by the pullbacks of measure spaces.
The reader should be warned that pullbacks of measure spaces do not satisfy the usual universal property of pullbacks: they are therefore \emph{not} pullbacks in the usual sense of category theory.\medskip

\begin{definition}\label{def:threeffdata}
Let $\mathcal C$ be a category with exceptional morphisms and distinguished squares.
A \emph{three-functor formalism} on $\mathcal C$ consists of:
	\begin{itemize} [wide=10pt, leftmargin=5mm]
		\item A functor $\mathcal{C}^\tn{op}\rar \mathbf{SymMonCat}$.\footnote{This is a really a pseudo-functor: we do not require an equality $\pull{g} \pull{f}= (fg)^\smsq$, but rather the data of a specified monoidal natural transformation $\pull{g} \pull{f}\cong (fg)^\smsq$, subject to its own coherences.}
We write $(\mathcal{D}_X,\otimes, \mathbb{I})$ for the value of that functor on an object $X\in \mathcal{C}$, and write $\pull{f}:\mathcal{D}_Y\to \mathcal{D}_X$
for the value of that functor on a morphism $f:X\to Y$. The symmetric monoidal functor $\pull{f}$ is called the \emph{pullback}.\medskip   
		\item A functor $\mathcal{E} \rar \mathbf{Cat}$ on the subcategory of exceptional morphisms which maps $X\in \mathcal{C}$ to (the underlying category of) $\mathcal{D}_X$. The value of that functor on a morphism $f:X\to Y$ is denoted by $\push{f}:\mathcal D_X\to \mathcal D_Y$, and is called the \emph{exceptional pushforward}.\medskip
	\end{itemize}
along with:
	\begin{itemize}[wide=10pt, leftmargin=5mm]
		\item (\emph{projection}) a natural isomorphism $$\push{f}(-)\otimes - \,\,\xRightarrow{\cong}\,\, \push{f}\left(- \otimes \pull{f}(-)\right)$$ for every morphism $f\in \mathcal E$.\medskip
		\item (\emph{base change}) a natural isomorphism $$\pull{f} \push{g} \,\,\xRightarrow{\cong}\,\, \push{\bar{g}}\pull{\bar{f}}$$ for every distinguished square~\eqref{eq:distinguishedsquare}.
	\end{itemize}

\begin{remark}
	The `three functors' are the functors $\pull{f}$, $\push{f}$, and $\otimes$.
\end{remark}

\noindent
This data is subject to the coherence conditions 1)--10) listed below. \medskip

1) Compatibility of projection formula with identity maps.\\
For $M,N\in \mathcal D_X$, we have:
\[
\begin{tikzcd}
    \push{\id}M\otimes N \arrow[r,"\cong"]\arrow[d,"\cong"] & M \otimes N\arrow[d,"\cong"]\\
    \push{\id}(M \otimes \pull{\id}N) \arrow[r,"\cong"]& \push{\id}(M\otimes N)
\end{tikzcd}.
\]

2) Compatibility of projection formula with composition of maps.\\
Given composable maps $X\stackrel g \to Y\stackrel f \to Z$ and objects $M\in\mathcal D_X$, $N\in\mathcal D_Z$, we have:
\[
\begin{tikzcd}
    \push{f}\push{g}M\otimes N \arrow[r,"\cong"]\arrow[d,"\cong"]&
    \push{f}\big(\push{g}M \otimes \pull{f}N\big) 
    \arrow[r,"\cong"]&
    \push{f}\push{g}\big(M\otimes \pull{g}\pull{f}N\big)\arrow[d,"\cong"]\\
    \push{(fg)}M\otimes N \arrow[rr,"\cong"]&&
    \push{(fg)}\big(M\otimes \pull{(fg)}N\big)
\end{tikzcd}.
\]

3) Compatibility of projection formula with identity objects.\\
For $f:X\to Y$ a map, and $M\in\mathcal D_X$, we have:
\[
\begin{tikzcd}
    \push{f}M\otimes \mathbb{I} \arrow[r,"\cong"]\arrow[d,"\cong"]& \push{f}M\arrow[d,"\cong"]\\
    \push{f}(M\otimes \pull{f}\mathbb{I})\arrow[r,"\cong"]& \push{f}(M \otimes \mathbb{I})
\end{tikzcd}.
\]

4) Compatibility of projection formula with tensor product.\\
Given a map $f:X\to Y$, and objects $M\in\mathcal D_X$ and  $N,P\in\mathcal D_Y$, we have:
\[
\begin{tikzcd}
    \push{f}M\otimes N \otimes P \arrow[r,"\cong"]\arrow[d,"\cong"]& \push{f}\big(M \otimes \pull{f}N \big)\otimes P \arrow[d,"\cong"]\\
    \push{f}\big(M \otimes \pull{f}(N\otimes P)\big) \arrow[r,"\cong"]& \push{f}\big(M \otimes \pull{f}N \otimes \pull{f}P \big)
\end{tikzcd}.
\]
where we have suppressed the associators from the diagram. 

5), 6) Compatibility of base change with identity maps:
\[
\begin{tikzcd}
    \pull{f} \push{\id} \arrow[r,"\cong"]\arrow[dr,"\cong"']&\push{\id}\pull{f}  \arrow[d,"\cong"]\\ & \pull{f}
\end{tikzcd}\qquad
\begin{tikzcd}
    \pull{\id} \push{f} \arrow[r,"\cong"]\arrow[dr,"\cong"']& \push{f}\pull{\id} \arrow[d,"\cong"]\\ & \push{f}
\end{tikzcd}.
\]

7) Compatibility of base change with horizontal composition of distinguished squares.
For every pair of horizontally composable distinguished squares
\[
\hspace{-1cm}
\begin{tikzcd}[baseline=0]
    \, \arrow[r,"\bar{f}_2"]\arrow[d,"\bar{\bar{g}}"]&\, \arrow[r,"\bar{f}_1"]\arrow[d,"\bar{g}"]& \, \arrow[d,"g"]\\
    \, \arrow[r,"f_2"] &\, \arrow[r,"f_1"] &\,
\end{tikzcd}
\,\,\mbox{ we have}\,\,\,\,\,\,\,\,\,\,
\begin{tikzcd}
\pull{f_2{}}\pull{f_1{}}\push{g}\arrow[d,"\cong"]\arrow[r,"\cong"]& \pull{f_2{}}\push{\bar{g}}\pull{\bar{f}_1{}} \arrow[r,"\cong"] & \push{\bar{\bar{g}}}\pull{\bar{f}_2{}} \pull{\bar{f}_1{}}\arrow[d,"\cong"]\\
    \pull{(f_1f_2)} \push{g} \arrow[rr,"\cong"]& & \push{\bar{\bar{g}}}\pull{(\bar{f}_1\bar{f}_2)}
\end{tikzcd}.
\hspace{-1.9cm}
\begin{tikzpicture}
\useasboundingbox (-2.5,0) rectangle +(-2.4,.1);
    \node[scale=0.75]{$\begin{matrix}
			\begin{tikzpicture}[baseline=-7pt]
				\draw[line width=0.7pt,arrows = {-Stealth[length=6pt, inset=5pt,width=6.5pt]}] (-0.35,-0.35)--(-0.6,-0.35);
				\draw[line width=0.7pt,arrows = {-Stealth[length=6pt, inset=5pt,width=6.5pt]}] (0,0)--(0,-0.3);
				\draw[line width=0.7pt,arrows = {-Stealth[length=6pt, inset=5pt,width=6.5pt]}] (-0.05,-0.35)--(-0.35,-0.35);
			\end{tikzpicture} &
				\begin{tikzpicture}[baseline=-7pt]
				\draw[line width=0.7pt,arrows = {-Stealth[length=6pt, inset=5pt,width=6.5pt]}] (-0.35,-0.35)--(-0.6,-0.35);
				\draw[line width=0.7pt,arrows = {-Stealth[length=6pt, inset=5pt,width=6.5pt]}] (0,0)--(-0.35,0);
				\draw[line width=0.7pt,arrows = {-Stealth[length=6pt, inset=5pt,width=6.5pt]}] (-0.35,0)--(-0.35,-0.35);
			\end{tikzpicture}
			& \begin{tikzpicture}[baseline=-7pt]
				\draw[line width=0.7pt,arrows = {-Stealth[length=6pt, inset=5pt,width=6.5pt]}] (-0.6,0)--(-0.6,-0.35);
				\draw[line width=0.7pt,arrows = {-Stealth[length=6pt, inset=5pt,width=6.5pt]}] (0,0)--(-0.35,0);
				\draw[line width=0.7pt,arrows = {-Stealth[length=6pt, inset=5pt,width=6.5pt]}] (-0.35,0)--(-0.6,0);
			\end{tikzpicture}\\	
			\begin{tikzpicture}[baseline=-7pt]
				\draw[line width=0.7pt,arrows = {-Stealth[length=6pt, inset=5pt,width=6.5pt]}] (0,0)--(0,-0.3);
				\draw[line width=0.7pt,arrows = {-Stealth[length=6pt, inset=5pt,width=6.5pt]}] (-0.05,-0.35)--(-0.6,-0.35);
			\end{tikzpicture} &  &
			\begin{tikzpicture}[baseline=-7pt]
				\draw[line width=0.7pt,arrows = {-Stealth[length=6pt, inset=5pt,width=6.5pt]}] (0,0)--(-0.6,0);
				\draw[line width=0.7pt,arrows = {-Stealth[length=6pt, inset=5pt,width=6.5pt]}] (-0.6,0)--(-0.6,-0.35);
			\end{tikzpicture}
    \end{matrix}$};\end{tikzpicture}
\]

8) Compatibility of base change with vertical composition of distinguished squares.
For every pair of vertically composable distinguished squares
\[
\hspace{-.2cm}
\begin{tikzcd}[baseline=0]
  \,  \arrow[r,"\bar{\bar{f}}"']\arrow[d,"\bar{g}_2"]&\arrow[d,"g_2"]\,\\ \,\arrow[r,"\bar{f}"'] \arrow[d,"\bar{g}_1"] & \, \arrow[d,"g_1"]\\ \, \arrow[r,"f"']& \,
\end{tikzcd}\!\!\!\!
\,\,\,\,\,\,\,\,\,\,\,\,\mbox{ we have}\,\,\,\,\,\,\,\,\,\,
\begin{tikzcd}
\pull{f}\push{g_1{}}\push{g_2{}}\arrow[r,"\cong"]\arrow[d,"\cong"]& \push{\bar{g}_1{}}\pull{\bar{f}}\push{g_2{}}\arrow[r,"\cong"] &\push{\bar{g}_1{}}\push{\bar{g}_2{}} \pull{\bar{\bar{f}}} \arrow[d,"\cong"]\\
\pull{f}\push{(g_1g_2)} \arrow[rr,"\cong"] & & \push{(\bar{g}_1\bar{g}_2)}\pull{\bar{\bar{f}}}
\end{tikzcd}.
\hspace{-1.9cm}
\begin{tikzpicture}
\useasboundingbox (-2.5,0) rectangle +(-2.4,.1);
    \node[scale=0.75]{$\begin{matrix}
			\begin{tikzpicture}[baseline=-7pt]
				\draw[line width=0.7pt,arrows = {-Stealth[length=6pt, inset=5pt,width=6.5pt]}] (0,0.3)--(0,0);
				\draw[line width=0.7pt,arrows = {-Stealth[length=6pt, inset=5pt,width=6.5pt]}] (0,0)--(0,-0.3);
				\draw[line width=0.7pt,arrows = {-Stealth[length=6pt, inset=5pt,width=6.5pt]}] (-0.05,-0.35)--(-0.35,-0.35);
			\end{tikzpicture} &
				\begin{tikzpicture}[baseline=-7pt]
				\draw[line width=0.7pt,arrows = {-Stealth[length=6pt, inset=5pt,width=6.5pt]}] (0,0.3)--(0,0);
				\draw[line width=0.7pt,arrows = {-Stealth[length=6pt, inset=5pt,width=6.5pt]}] (0,0)--(-0.35,0);
				\draw[line width=0.7pt,arrows = {-Stealth[length=6pt, inset=5pt,width=6.5pt]}] (-0.35,0)--(-0.35,-0.35);
			\end{tikzpicture}
			& 	\begin{tikzpicture}[baseline=-7pt]
				\draw[line width=0.7pt,arrows = {-Stealth[length=6pt, inset=5pt,width=6.5pt]}] (0,0.3)--(-0.35,0.3);
				\draw[line width=0.7pt,arrows = {-Stealth[length=6pt, inset=5pt,width=6.5pt]}] (-0.35,0.3)--(-0.35,0);
				\draw[line width=0.7pt,arrows = {-Stealth[length=6pt, inset=5pt,width=6.5pt]}] (-0.35,0)--(-0.35,-0.3);
			\end{tikzpicture} \\	
			\begin{tikzpicture}[baseline=-7pt]
			\draw[line width=0.7pt,arrows = {-Stealth[length=6pt, inset=5pt,width=6.5pt]}] (0,0.3)--(0,-0.3);
			\draw[line width=0.7pt,arrows = {-Stealth[length=6pt, inset=5pt,width=6.5pt]}] (-0.05,-0.35)--(-0.35,-0.35);
		\end{tikzpicture}&  &
				\begin{tikzpicture}[baseline=-7pt]
				\draw[line width=0.7pt,arrows = {-Stealth[length=6pt, inset=5pt,width=6.5pt]}] (0,0.3)--(-0.35,0.3);
				\draw[line width=0.7pt,arrows = {-Stealth[length=6pt, inset=5pt,width=6.5pt]}] (-0.35,0.3)--(-0.35,-0.3);
			\end{tikzpicture} 
\end{matrix}$};\end{tikzpicture}
\]

9) Compatibility between the projection formula and the base change. For every distinguished square \eqref{eq:distinguishedsquare},
and objects $M\in\mathcal D_Y$ and  $N \in \mathcal D_Z$, we have:

\begin{equation}\label{eq:bcandprojcohone}
	\begin{tikzcd}[column sep=small]
	\pull{f}\big(\push{g}M\otimes N\big)\arrow[r,"\cong"]\arrow[d,"\cong"]&\arrow[r,"\cong"]	\pull{f} \push{g}M\otimes \pull{f}N&\arrow[d,"\cong"]	\push{\bar{g}}\pull{\bar{f}}M\otimes \pull{f}N\\
	\pull{f} \push{g}(M\otimes \pull{g}N)\arrow[d,"\cong"]& &\arrow[d,"\cong"]\push{\bar{g}}\big(\pull{\bar{f}}M\otimes \pull{\bar{g}} \pull{f}N\big)\\ 
	\push{\bar{g}}\pull{\bar{f}}\big(M\otimes \pull{g}N\big)\arrow[r,"\cong"]& \push{\bar{g}}\big(\pull{\bar{f}}M\otimes \pull{\bar{f}} \pull{g}N\big)\arrow[r,"\cong"]&\push{\bar{g}}\big(\pull{\bar{f}}M\otimes (g\bar{f})^\smsq N\big)
	\end{tikzcd}
\hspace{-2.2cm}
\begin{tikzpicture}
\useasboundingbox (-2.81,0) rectangle +(-2.8,.01);
    \node[scale=.5]{$
        \begin{matrix}
			\begin{tikzpicture}[baseline=-7pt]
				\filldraw (0,-0.6) circle (0.05);
				\draw[line width=0.7pt,arrows = {-Stealth[length=6pt, inset=5pt,width=6.5pt]}] (0,0)--(0,-0.5);
				\draw[line width=0.7pt,arrows = {-Stealth[length=6pt, inset=5pt,width=6.5pt]}] (-0.1,-0.55)--(-0.6,-0.55);
			\end{tikzpicture}&
			\begin{tikzpicture}[baseline=-7pt]
				\draw[line width=0.7pt,arrows = {-Stealth[length=6pt, inset=5pt,width=6.5pt]}] (0,0)--(0,-0.5);
				\draw[line width=0.7pt,arrows = {-Stealth[length=6pt, inset=5pt,width=6.5pt]}] (0,-0.5)--(-0.6,-0.5);
				\draw[line width=0.7pt,arrows = {-Stealth[length=6pt, inset=5pt,width=6.5pt]}] (0.05,-0.63)--(-0.6,-0.63);
			\end{tikzpicture}&
			\begin{tikzpicture}[baseline=-7pt]
				\draw[line width=0.7pt,arrows = {-Stealth[length=6pt, inset=5pt,width=6.5pt]}] (0,0)--(-0.6,0);
				\draw[line width=0.7pt,arrows = {-Stealth[length=6pt, inset=5pt,width=6.5pt]}] (-0.6,0)--(-0.6,-0.5);
				\draw[line width=0.7pt,arrows = {-Stealth[length=6pt, inset=5pt,width=6.5pt]}] (0.05,-0.63)--(-0.6,-0.63);
			\end{tikzpicture}\\
			\begin{tikzpicture}[baseline=-7pt]
				\filldraw (0,0) circle (0.05);
				\draw[line width=0.7pt,arrows = {-Stealth[length=6pt, inset=5pt,width=6.5pt]}] (0.1,-0.55)--(0.1,0)--(-0.1,0)--(-0.1,-0.55);
				\draw[line width=0.7pt,arrows = {-Stealth[length=6pt, inset=5pt,width=6.5pt]}]  (-0.1,-0.55)--(-0.6,-0.55);
			\end{tikzpicture}&&
			\begin{tikzpicture}[baseline=-7pt]
				\draw[line width=0.7pt,arrows = {-Stealth[length=6pt, inset=5pt,width=6.5pt]}] (0,0)--(-0.6,0);
				\draw[line width=0.7pt,arrows = {-Stealth[length=6pt, inset=5pt,width=6.5pt]}] (0.05,-0.6)--(-0.6,-0.6);
				\draw[line width=0.7pt,arrows = {-Stealth[length=6pt, inset=5pt,width=6.5pt]}] (-0.6,-0.6)--(-0.6,0)--(-0.8,0)--(-0.8,-0.65);
			\end{tikzpicture}\\
			\begin{tikzpicture}[baseline=-7pt]
				\filldraw (0,0.65) circle (0.05);
				\draw[line width=0.7pt,arrows = {-Stealth[length=6pt, inset=5pt,width=6.5pt]}] (0,0)--(0,0.6);
				\draw[line width=0.7pt,arrows = {-Stealth[length=6pt, inset=5pt,width=6.5pt]}] (-0.05,0.6)--(-0.6,0.6)--(-0.6,0);
			\end{tikzpicture}&
			\begin{tikzpicture}[baseline=-7pt]
				\draw[line width=0.7pt,arrows = {-Stealth[length=6pt, inset=5pt,width=6.5pt]}] (0,0)--(0,0.55);
				\draw[line width=0.7pt,arrows = {-Stealth[length=6pt, inset=5pt,width=6.5pt]}] (0,0.55)--(-0.6,0.55);
				\draw[line width=0.7pt,arrows = {-Stealth[length=6pt, inset=5pt,width=6.5pt]}] (0,0.65)--(-0.6,0.65);
				\draw[line width=0.7pt,arrows = {-Stealth[length=6pt, inset=5pt,width=6.5pt]}] (-0.65,0.5)--(-0.65,0);
			\end{tikzpicture}&
			\begin{tikzpicture}[baseline=-7pt]
				\draw[line width=0.7pt,arrows = {-Stealth[length=6pt, inset=5pt,width=6.5pt]}] (0,0)--(0,0.55)--(-0.6,0.55);
				\draw[line width=0.7pt,arrows = {-Stealth[length=6pt, inset=5pt,width=6.5pt]}] (0,0.65)--(-0.6,0.65);
				\draw[line width=0.7pt,arrows = {-Stealth[length=6pt, inset=5pt,width=6.5pt]}] (-0.65,0.5)--(-0.65,0);
			\end{tikzpicture},
	\end{matrix}$};
\end{tikzpicture}
\end{equation}
using that $g\bar{f}=f\bar{g}$.

10) Compatibility of the horizontal and vertical projection formulas inside a distinguished square. 
If all the arrows of a distinguished square \eqref{eq:distinguishedsquare} are in $\mathcal E$, then for all $M\in\mathcal D_X$ and  $N \in \mathcal D_Y$,
we have:
\begin{equation}\label{eq:bcandprojcohtwo}
	\begin{tikzcd}[column sep=small]
	\push{f}M\otimes \push{g}N\arrow[r,"\cong"]\arrow[d,"\cong"]& 	\push{g}\big(\pull{g} \push{f}M\otimes N\big)\arrow[r,"\cong"]& 	\push{g}\big(\push{\bar{f}}\pull{\bar{g}}M\otimes N\big)\arrow[d,"\cong"]\\
		\push{f} \big(M\otimes \pull{f} \push{g}N\big)\arrow[d,"\cong"]& &	\push{g}\push{\bar{f}}\big(\pull{\bar{g}}M\otimes  \pull{\bar{f}}N\big)\arrow[d,"\cong"]\\ 
	\big(M\otimes \push{\bar{g}} \pull{\bar{f}}N\big)\arrow[r,"\cong"]&\push{f}\push{\bar{g}}\big(\pull{\bar{g}}M\otimes \pull{\bar{f}}N\big)\arrow[r,"\cong"]&(f\bar{g})_\smsq\big(\pull{\bar{g}}M\otimes \pull{\bar{f}}N\big)
	\end{tikzcd}.
\hspace{-2.2cm}
\begin{tikzpicture}[baseline=0pt]
\useasboundingbox (-2.81,0) rectangle +(-2.8,.01);
    \node[scale=0.5]{$
		\begin{matrix}
		\begin{tikzpicture}[baseline=-7pt]
				\draw[line width=0.7pt,arrows = {-Stealth[length=6pt, inset=5pt,width=6.5pt]}] (0,0)--(0,-0.5);
				\draw[line width=0.7pt,arrows = {-Stealth[length=6pt, inset=5pt,width=6.5pt]}] (-0.6,-0.55)--(-0.1,-0.55);
			\end{tikzpicture}&
			\begin{tikzpicture}[baseline=-7pt]
				\filldraw (0.1,0) circle (0.05);
				\draw[line width=0.7pt,arrows = {-Stealth[length=6pt, inset=5pt,width=6.5pt]}] (0,-0.5)--(0,0)--(0.2,0)--(0.2,-0.55);
				\draw[line width=0.7pt,arrows = {-Stealth[length=6pt, inset=5pt,width=6.5pt]}] (-0.6,-0.5)--(0,-0.5);
			\end{tikzpicture}&
			\begin{tikzpicture}[baseline=-7pt]
				\filldraw (0.1,0) circle (0.05);
				\draw[line width=0.7pt,arrows = {-Stealth[length=6pt, inset=5pt,width=6.5pt]}] (-0.5,0)--(0,0);
				\draw[line width=0.7pt,arrows = {-Stealth[length=6pt, inset=5pt,width=6.5pt]}] (0.05,-0.1)--(0.05,-0.55);
				\draw[line width=0.7pt,arrows = {-Stealth[length=6pt, inset=5pt,width=6.5pt]}] (-0.5,-0.5)--(-0.5,0);
			\end{tikzpicture}\\
			\begin{tikzpicture}[baseline=-7pt]
				\filldraw (-0.6,-0.625) circle (0.05);
				\draw[line width=0.7pt,arrows = {-Stealth[length=6pt, inset=5pt,width=6.5pt]}] (0,0)--(0,-0.55);
				\draw[line width=0.7pt,arrows = {-Stealth[length=6pt, inset=5pt,width=6.5pt]}] (0,-0.55)--(-0.6,-0.55)--(-0.6,-0.7)--(0,-0.7);
			\end{tikzpicture}&&
			\begin{tikzpicture}[baseline=-7pt]
				\draw[line width=0.7pt,arrows = {-Stealth[length=6pt, inset=5pt,width=6.5pt]}] (0.1,0.1)--(-0.5,0.1)--(-0.5,0)--(0.1,0);
				\draw[line width=0.7pt,arrows = {-Stealth[length=6pt, inset=5pt,width=6.5pt]}] (0.1,0)--(0.1,-0.5);
				\draw[line width=0.7pt,arrows = {-Stealth[length=6pt, inset=5pt,width=6.5pt]}] (-0.5,-0.5)--(-0.5,0);
			\end{tikzpicture}\\
			\begin{tikzpicture}[baseline=-20pt]
				\filldraw (-0.6,-0.65) circle (0.05);
				\draw[line width=0.7pt,arrows = {-Stealth[length=6pt, inset=5pt,width=6.5pt]}] (0,0)--(-0.6,0);
				\draw[line width=0.7pt,arrows = {-Stealth[length=6pt, inset=5pt,width=6.5pt]}] (-0.6,0)--(-0.6,-0.55);
				\draw[line width=0.7pt,arrows = {-Stealth[length=6pt, inset=5pt,width=6.5pt]}]  (-0.55,-0.575)--(0,-0.575);
			\end{tikzpicture}&
			\begin{tikzpicture}[baseline=-4pt]
				\draw[line width=0.7pt,arrows = {-Stealth[length=6pt, inset=5pt,width=6.5pt]}] (-0.6,0)--(0,0);
				\draw[line width=0.7pt,arrows = {-Stealth[length=6pt, inset=5pt,width=6.5pt]}] (0,0.6)--(-0.6,0.6);
				\draw[line width=0.7pt,arrows = {-Stealth[length=6pt, inset=5pt,width=6.5pt]}] (-0.7,0)--(-0.7,0.6)--(-0.6,0.6)--(-0.6,0);
			\end{tikzpicture}&
			\begin{tikzpicture}[baseline=-4pt]
				\draw[line width=0.7pt,arrows = {-Stealth[length=6pt, inset=5pt,width=6.5pt]}] (0,0.6)--(-0.6,0.6);
				\draw[line width=0.7pt,arrows = {-Stealth[length=6pt, inset=5pt,width=6.5pt]}] (-0.7,0)--(-0.7,0.6)--(-0.6,0.6)--(-0.6,0)--(0,0);
			\end{tikzpicture}
	\end{matrix}$};
\end{tikzpicture}
\end{equation}

\noindent
This finishes the definition of a three-functor formalism.
\end{definition}

In the case that is relevant for the present paper, there will be no distinction between morphisms and exceptional morphisms.
Correspondingly, we will just write `pushforward' instead of `exceptional pushforward'.

\begin{remark}\label{rem 2.5}
	In categorically better behaved situations, there is a more concise definition of a three-functor formalism as a functor on a category of correspondences, see \cite{Scholze2022}. In our setup, this approach is however not applicable as it relies for example on the diagonal map $X\rar X\times X$ to produce the monoidal structure on $\mathcal D_X$. But the product of measure spaces (which is not a categorical product) does not admit a diagonal.
\end{remark}

\subsection{Variants and enhancements of three-functor formalisms}
\label{sec:Variants and enhancements of three-functor formalisms}
The example of a three-functor formalism studied in this paper (measure spaces and measurable maps) admits extra structure not yet included in the above definition: each category $\mathcal D_X$ comes equipped with a contravariant involution $\mathbb D_X$. This should be thought of as similar to 
the map $\mathcal F \mapsto \mathbb D_X\mathcal F:=\underline{\Hom}(\mathcal F,\omega_X)$ which exists in any \emph{six}-functor formalism (here, the dualising object $\omega_X\in\mathcal D_X$ is given by $\omega_X=p^!\mathbb I$, for $p:X\to *$, and $\mathcal F\in\mathcal D_X$ is any object). 

In a six-functor formalism, we always have natural isomorphisms (see for example \cite[Section A.5]{Cisinski2019})
\[
f_*(\mathbb D_X\mathcal F) \cong 
\mathbb D_Y(f_!\mathcal F)
\qquad
\text{and}
\qquad
f^!(\mathbb D_Y\mathcal G) \cong 
\mathbb D_X(f^*\mathcal G).
\]
Our three-functor formalism has the interesting property that it supports an involutive duality functor which comes with isomorphisms
\[
\push{f}(\mathbb D_X M) \cong 
\mathbb D_Y(\push{f} M)
\qquad
\text{and}
\qquad
\pull{f}(\mathbb D_YN) \cong 
\mathbb D_X(\pull{f}N).
\]
In that sense, the functor $\pull{f}$ plays the role of both $f^*$ and $f^!$ (which is why we didn't denote it $f^*$), and similarly $\push{f}$ plays the role of both $f_*$ and $f_!$.
We call the resulting structure an \emph{involutive three-functor formalism}:

\begin{definition}\label{def:inv3ff}
    An \emph{involutive three-functor formalism} is a three-functor formalism $(\mathcal{C},\mathcal{E},\mathcal{S}, \pull{f}, \push{f}, \otimes)$ together with an involutive \emph{duality functor}
\[
\mathbb D_X\colon \mathcal{D}_X \longrightarrow\mathcal{D}_X^{op}
\]
    for every $X\in \mathcal{C}$ (when no confusion can arise we will write $\mathbb{D}$ in place of $\mathbb{D}_X$).
The duality functor is involutive in the sense that it comes equipped with an invertible natural transformation $\varphi_M:M \to \mathbb D^2 M$ for all $M\in\mathcal D_X$, satisfying $ \varphi_{\mathbb D M}=\mathbb D(\varphi_M)^{-1}$.
It is also compatible with the tensor structure on $\mathcal D_X$, meaning that there are natural isomorphisms $r:\mathbb I\to \mathbb D \mathbb I$ and $\nu_{M,N}:\mathbb DM\,\otimes\, \mathbb DN \to \mathbb D(M\otimes N)$ satisfying $\varphi_{\mathbb I}=\mathbb D(r)^{-1} \circ r$
and $\varphi_{M\otimes N} =
\mathbb D(\nu_{M,N})^{-1}
\circ
\nu_{\mathbb DM,\mathbb DN}
\circ (\varphi_M\otimes \varphi_N)$.

We additionally require that for every morphism $f\colon X \rar Y$ in $\mathcal{C}$ we have natural isomorphisms
\begin{equation}
\label{eq: map xi}
\begin{split}
\xi_M \,&\colon \push{f}(\mathbb{D}_XM) \xrightarrow{\,\,\cong\,\,} \mathbb{D}_Y\push{f}(M)
\\
\zeta_N \,&\colon \pull{f}(\mathbb{D}_YN) \xrightarrow{\,\,\cong\,\,} \mathbb{D}_X\pull{f}(N)
\end{split}
\end{equation}
satisfying
$\xi_{\mathbb{D}_XM}=\mathbb{D}_Y(\xi_{M})\circ \varphi_{\push{f}(M)}\circ 
\push{f}(\varphi_{M})^{-1}
$ for every $M\in \mathcal{D}_X$, and 
$\zeta_{\mathbb{D}_YN}=\mathbb{D}_X(\zeta_{N})\circ \varphi_{\push{f}(N)}\circ 
\push{f}(\varphi_{N})^{-1}
$
for every $N\in \mathcal{D}_Y$.
The map $\zeta$ is furthermore required to be monoidal in the sense that\vspace{-.7mm}
$\zeta_{{\mathbb I}_Y}= \mathbb D(i)\circ r_X \circ i \circ \pull{f}(r_Y)^{-1}$ where $i$ is the isomorphism $i \colon \pull{f}(\mathbb{I}_Y)\xrightarrow{\cong} {\mathbb I}_X$, and $\zeta_{M \otimes N}= \mathbb{D}_X(\mu_{M,N })\circ \nu_{\pull{f}M,\pull{f}N}\circ (\zeta_M \otimes \zeta_N)\circ\mu_{\mathbb{D}_YM,\mathbb{D}_YN}\circ\pull{f}(\nu_{M,N})^{-1}$
where $\mu$ is the isomorphism $\mu_{M,N}:\pull{f}(M\otimes N) \to \pull{f}(M)\otimes \pull{f}(N)$.

Finally, the duality needs to be compatible with projection and base change in the sense that the following two diagrams commute:
\[
\begin{tikzpicture}[yscale=1.1]
\node (A) at (0,2) {$\push{f} (\mathbb D_X M) \otimes \mathbb D_Y N$};
\node (B) at (4,2) {$\push{f} (\mathbb D_X M \otimes \pull{f}(\mathbb D_Y N))$};
\node (C) at (8,2) {$\push{f} (\mathbb D_X M \otimes \mathbb D_X(\pull{f} N))$};
\node (D) at (8,1) {$\push{f} (\mathbb D_X (M \otimes \pull{f} N))$};
\node (E) at (8,0) {$\mathbb D_Y(\push{f} (M \otimes \pull{f} N))$};
\node (F) at (0,0) {$\mathbb D_Y(\push{f} M \otimes N)$};
\node (G) at (0,1) {$\mathbb D_Y(\push{f} M) \otimes \mathbb D_YN$};
\draw[->] (A) -- (B);
\draw[->] (B) -- (C);
\draw[->] (C) -- (D);
\draw[->] (D) -- (E);
\draw[->] (F) -- (E);
\draw[->] (G) -- (F);
\draw[->] (A) -- (G);
\end{tikzpicture}
\]
for $f\colon X \rar Y $, $M\in \mathcal{D}_X$, $N\in \mathcal{D}_Y$, and
\[
\begin{tikzcd}
    \pull{g} \push{f} \mathbb{D}_X \arrow[r]\arrow[d]& \push{\bar{f}}\pull{\bar{g}}\mathbb{D}_X\arrow[r]&   \push{\bar{f}}\mathbb{D}_{W} \pull{\bar{g}}\arrow[d]\\
    \pull{g}\mathbb{D}_{Z}\push{f}\arrow[r] &
    \mathbb{D}_Y \pull{g}\push{f}\arrow[r]&
    \mathbb{D}_Y \push{\bar f}\pull{\bar g}
\end{tikzcd}
\]
where $f$, $g$, $\bar f$, $\bar g$ are as in \eqref{eq:distinguishedsquare}.
This finishes Definition~\ref{def:inv3ff}.
\end{definition}

Recall that a category $\mathcal D$ is called a \emph{dagger category} if it comes equipped with maps $\dagger:\Hom_{\mathcal D}(X,Y)\to \Hom_{\mathcal D}(Y,X)$ that are compatible with composition of morphisms, and satisfy $f^{\dagger\dagger}=f$. If the category is $\mathbb C$-linear, we typically also require the operation $f\mapsto f^\dagger$ be be antilinear.
An isomorphism $u:X\to Y$ in a dagger category is called \emph{unitary} if $u^{-1}=u^\dagger$.

If the categories $\mathcal D_X$ that appear in a three-functor formalism are dagger categories (which they are in our example of interest), and if the functors $\pull{f}$, $\push{f}$ and $-\otimes M$ are all dagger functors, then we call our involutive three-functor formalism a bi-involutive three-functor formalism:\footnote{See \cite[Section 4.1]{Henriques2023} for more about (bi-)involutive categories and functors between them.}

\begin{definition}\label{def: biinv 3ff}
A \emph{bi-involutive three-functor formalism} is an involutive three-functor formalism where each category $\mathcal D_X$ is a dagger category, the functors $\pull{f}$, $\push{f}$, $\otimes$, $\mathbb D$ are dagger functors, and all the natural isomorphisms which appear in the various layers of the definition are unitary.
\end{definition}

In a {six}-functor formalism the functor $f^*$, $f_!$ and $M\otimes -$ have right adjoints. The three-functor formalism studied in this paper does not extend to a six-functor formalism.
    Nevertheless, $\pull{f}$, $\push{f}$ and $M\otimes -$ do have `adjoints' in the sense of \cite[Def~4.14]{Henriques2024}: this is a notion of adjoint functor that only makes sense for $\mathrm{W}^*$-categories, and which is not the usual notion from category theory.
    (In that theory, there is no distinction between left and right adjoints.)    
With this exotic notion of adjoint, the functors $\pull{f}$ and $\push{f}$ are each other's adjoints, and $-\otimes M$ is adjoint to $-\otimes \mathbb{D} M$.
This observation adds weight to the argument that $\pull{f}$ plays the role of both $f^*$ and $f^!$ , and that $\push{f}$ plays the role of both $f_*$ and $f_!$.
The role of $\underline{\Hom}(-,-)$ is then played by $\mathbb{D} (-)\otimes -$.

\begin{remark}
If one defines 
$\underline{\Hom}(M,N):=\mathbb{D}M\otimes N$, and $\omega_X:=\pull{p}\mathbb I$ for $p:X\to *$ (i.e.~$\omega_X=\mathbb I$), then the formula 
$\mathbb D_XM =\underline{\Hom}_X(M,\omega_X)$ holds true.
\end{remark}

The overall structure that our three-functor formalism possesses is what we call a \emph{unitary three-functor formalism}. It is a bi-involutive three-functor formalism with values in $\mathrm{W}^*$-categories, such that $(\pull{f},\push{f})$ and $(-\otimes M,-\otimes \mathbb{D} M)$ are adjoint pairs in the sense of \cite{Henriques2024}.

\section{Commutative von Neumann algebras and their modules}\label{sec:commvn}

Our three-functor formalism has the opposite of the category of commutative von Neumann algebras as its underlying category of spaces. It assigns to a von Neumann algebra $A$ its category of modules, and the three functors are induction, restriction, and fusion (the latter also known as \emph{Connes fusion} 
\cite{Wassermann1995}).

\subsection{The category of modules}
If $A$ is a commutative von Neumann algebra,
the category 
of $A$-modules and bounded $A$-linear maps  is a complete $\mathrm{W}^*$-tensor category in the sense of \cite{Henriques2024}. Its tensor product is given by the operation $-\boxtimes_A-$ of \emph{fusion} over $A$, which we now describe (standard references include \cite{Sauvageot1983,Connes1994}).

Let $A$ be a von Neumann algebra, not necessarily commutative, 
let $M$ be a right $A$-module, and let $N$ be a left $A$-module. The fusion $M\cofu{A} N$ of $M$ and $N$ is the Hilbert space completion of $\Hom_A(L^2 A, M)\otimes_A N$
with respect to the inner product
\begin{equation}\label{e:innerprodonfusion}
    \langle
\phi_1\otimes n_1,\phi_2\otimes n_2
\rangle:=\langle (\phi_2^*\phi_1)
n_1,n_2
\rangle,
\end{equation} 
where $\phi_2^*\phi_1:L^2A\to L^2A$ is identified with an element of $A$, and $L^2A$ is Haagerup's standard form of the von Neumann algebra $A$ \cite{Haagerup}.
Fusion can be alternatively described as a completion of $\Hom_A(L^2 A, M)\otimes_A L^2A\otimes_A \Hom_A(L^2A,N)$, or a completion of $M\otimes_A \Hom_A(L^2A,N)$. 
The operation of fusion over $A$ is associate and unital, with unit provided by $L^2A$.

In the later sections,
we will make heavy usage of the following result:
\begin{lemma} [{\cite[Cor 3.25]{Henriques2024}}]\label{lem:mainlemma}
	Let $A$ and $B$ be von Neumann algebras. Then
	\begin{enumerate}
		\item a functor $F\colon \lmod{A}\rightarrow \lmod{B}$ is determined by and equivalent to the data (up to canonical unitary isomorphism) of the object $F(L^2A)\in \lmod{B}$ together with the action of $A$ on $F(L^2A)$ by $B$-linear endomorphisms;
		\item a natural transformation $\eta\colon F \Rightarrow G$ between functors $F, G\colon \lmod{A}\rar \lmod{B}$ is determined by and equivalent to the data of the $A$-equivariant morphism $\eta_{L^2A}:F(L^2A)\rar G(L^2A)$ in $\lmod{B}$.
	\end{enumerate} 
\end{lemma}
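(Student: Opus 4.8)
The plan is to prove this as an Eilenberg--Watts--type theorem for von Neumann algebras: functors $\lmod{A}\rar\lmod{B}$ are identified, via fusion, with $B$-$A$ bimodules, and both assertions are then read off from that identification. The only property of the functors I will use beyond functoriality is that they preserve the $W^*$-structure of the module categories---that is, they commute with amplification (tensoring a module with an auxiliary Hilbert space) and with the splitting of projections (dagger-idempotents). This is precisely what is required of a morphism in the ambient category of $W^*$-categories.

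First I would extract the data from a functor $F$. The object is $F(L^2A)$. For the action, recall that the right regular representation of $A$ on $L^2A$ commutes with the left action and, by Tomita--Takesaki theory, identifies $A^{\op}$ with $\End_A(L^2A)$, the commutant of the left action. Applying $F$ to these $A$-linear endomorphisms yields a normal homomorphism $A^{\op}\rar\End_B(F(L^2A))$, i.e.\ a right action of $A$ on $F(L^2A)$ by $B$-linear endomorphisms. This makes $F(L^2A)$ a $B$-$A$ bimodule, which is the asserted data.

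The heart of part~(1) is the reconstruction formula $F(M)\cong F(L^2A)\cofu{A}M$, natural in $M\in\lmod{A}$, where the fusion uses the right $A$-action just produced. For $M=L^2A$ it is the unitality of fusion, $F(L^2A)\cofu{A}L^2A\cong F(L^2A)$. The generation step is the crux: every normal left $A$-module is a direct summand, cut out by an $A$-linear projection, of an amplification $L^2A\otimes H$ of the standard form. Both $F(-)$ and $F(L^2A)\cofu{A}(-)$ carry amplifications to amplifications and split projections---the first because $F$ preserves the $W^*$-structure, the second because fusion does so in each variable---and they agree on $L^2A$; hence they agree on all of $\lmod{A}$. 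Naturality is checked on morphisms between amplifications of $L^2A$, which are matrices over $\End_A(L^2A)$ and are therefore transported by both functors through the extracted action, and then extends to all morphisms by continuity. Conversely, any $B$-$A$ bimodule $X$ defines the functor $X\cofu{A}(-)$, whose value on $L^2A$ is $X$ with its given action; so the two constructions are mutually inverse up to canonical unitary isomorphism, proving part~(1).

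For part~(2), the component $\eta_{L^2A}$ of a natural transformation $\eta\colon F\Rightarrow G$ is $A$-equivariant because naturality of $\eta$ with respect to the endomorphisms in $\End_A(L^2A)\cong A^{\op}$ says exactly that $\eta_{L^2A}$ intertwines the two extracted actions. Conversely, transporting along the isomorphisms of part~(1), an $A$-equivariant $\phi\colon F(L^2A)\rar G(L^2A)$ induces $\phi\cofu{A}\id$, whose component at $L^2A$ is again $\phi$; these assignments are mutually inverse because a natural transformation between functors $X\cofu{A}(-)$ and $Y\cofu{A}(-)$ is determined by its value on the generator $L^2A$, naturality and continuity forcing every other component. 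The main obstacle I anticipate is making the generation step self-contained and precise: one must justify that $L^2A$ generates $\lmod{A}$ under amplification and splitting of projections, and verify that the reconstruction isomorphism is genuinely natural and unitary rather than merely a pointwise isomorphism. This is exactly where normality and continuity of the functors are indispensable, since the amplifying Hilbert space $H$ may be infinite-dimensional; once this structural input is in place, extracting the data, checking equivariance, and assembling the inverse constructions are all formal.
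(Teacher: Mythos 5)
Your argument is essentially sound, but be aware that the paper contains no proof of this statement to compare against: Lemma~\ref{lem:mainlemma} is imported verbatim from \cite[Cor~3.25]{Henriques2024}, and the paper only records afterwards the recovery formulas ${}_BF(M)\cong {}_BF(L^2A)\boxtimes_A M$ and $\eta_M=\eta_{L^2A}\boxtimes\id_M$ --- which are exactly the reconstruction mechanism you propose, so your route is the natural unpacking of the citation. In the cited source the statement is obtained as a corollary of the completion machinery developed there ($\lmod{A}$ is exhibited as the W*-completion of the one-object W*-category with endomorphism algebra $A$, so that W*-functors out of $\lmod{A}$ correspond to objects of $\lmod{B}$ equipped with a normal $A$-action), whereas you run the Eilenberg--Watts argument directly; your ``generation step'' --- every normal $A$-module is an $A$-linear corner of some amplification $L^2A\otimes H$ --- is precisely the content of that completion theorem, so the mathematical core is the same, and what your version buys is self-containedness at the cost of verifying by hand that the reconstruction unitary is canonical (independent of the chosen isometry $M\hookrightarrow L^2A\otimes H$; this follows from naturality with respect to the isometries and is routine, but it is indeed the crux you flag). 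Two points of precision, which you handle correctly but should make fully explicit in a write-up: first, ``functor'' in the lemma must mean a morphism of W*-categories (a normal dagger functor), since for bare $\mathbb{C}$-linear functors the statement is false, and normality is also what makes the extracted action $A^{\op}\cong\End_A(L^2A)\to\End_B(F(L^2A))$ a normal $*$-homomorphism rather than merely multiplicative; second, ``commutes with amplification'' is not a hypothesis one can impose on a functor directly --- $L^2A\otimes H$ should be treated as an orthogonal direct sum of copies of $L^2A$, encoded by isometries $u_i$ with $\sum_i u_iu_i^*=1$ converging appropriately, and its preservation up to canonical unitary is then a consequence of normality and the dagger structure, not an extra assumption; similarly, your identification of $\Hom_A(L^2A\otimes H,\,L^2A\otimes K)$ with ``matrices over $\End_A(L^2A)$'' requires passing to the weak closure of the finitely supported matrices, which is exactly where the ``extends by continuity'' step does real work.
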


\noindent Given two von Neumann algebras $A$ and $B$ as above,
one can recover a functor $F:\lmod{A}\rightarrow \lmod{B}$ from its value on the single object $L^2A$  by the formula ${}_BF(M)={}_BF(L^2A)\boxtimes_A M$. Similarly, given $F, G\colon \lmod{A}\rar \lmod{B}$ as above, one can recover a natural transformation $\eta\colon F \Rightarrow G$ from $\eta_{L^2A}$ by
\begin{equation}\label{eq: nat transf -- generator argument}
\eta_M=\eta_{L^2A}\boxtimes \id_M\colon {}_BF(L^2A)\boxtimes_{A} M\rar {}_BG(L^2A)\boxtimes_{A} M .
\end{equation}

\subsection{The fibre product of von Neumann algebras}
In a typical three-functor formalism, the base change formula is formulated with respect to pullback squares.
Interestingly, in our category of spaces (which is the opposite of the category of commutative von Neumann algebras), our distinguished squares are \emph{not} pullbacks, even though that category does admit pullbacks.
The distinguished squares are instead the \emph{fibre-product squares} of commutative von Neumann algebras.
The notion of fibre-product of two von Neumann algebras over a commutative von Neumann algebra goes back to Sauvageot \cite{Sauvageot1985}, and was later generalised 
to the case when the base von Neumann algebras is not commutative (\cite[\S2.3]{Enock200} and \cite{Timmermann2012}). 

\begin{definition}\label{def:vnfibreprod}
	Let $\vn{f}\colon C\rightarrow A$ and $\vn{g}\colon C\rightarrow B$ be maps between commutative von Neumann algebras. Then the \emph{fibre product $A \ast_{C} B$} is the von Neumann algebra generated by $A$ and $B$ on $ L^2 A \cofu{C} L^2 B$. 
\end{definition}
The fibre product contains the algebraic tensor product $A\odot_C B$ as a dense subspace.
\begin{remark}
	When $C=L^\infty\ms$ is the von Neumann algebra associated to a standard measure space $\ms$, a map from $C$ to a commutative von Neumann algebra $A$ is equivalent to a measurable bundle $\underline{A} \rar X$ of commutative von Neumann algebras. Given another map $C \rar B$ into a commutative von Neumann algebra $B$, the fibre product $A \ast_{C} B$ agrees with the fibre-wise spatial tensor product of the associated bundles of von Neumann algebras:
	$$
	A \tens{C} B = \int^\oplus_{x\in X} A_x \boxtimes B_x d\mu(x).
	$$
	For more details, see \cite{Sauvageot1985}.
\end{remark}

Given maps $\vn{f}\colon C \rar A$ and $\vn{g}\colon C \rar B$ of commutative von Neumann algebras, we get corresponding maps $ A\rar A \ast_{C} B$ and $ B \rar A \ast_{C}B$. In that situation, we refer to the resulting commutative square
\begin{equation}\label{eq:tenssquare}
	\begin{tikzcd}
		A\tens{C} B &B \arrow[l,"\bar{\vn{f}}"]\\A\arrow[u,"\bar{\vn{g}}"]&	C \arrow[l,"\vn{f}"]\arrow[u,"\vn{g}"]
	\end{tikzcd}
\end{equation}
as a \emph{fibre product square}. Fibre product squares will be the distinguished squares for our three-functor formalism (cf. Definition~\ref{def:threeffdata}).
The fibre product satisfies no universal property that we are aware of.

In the next section, we will construct several structure maps for the fiber product, and extablish some of their properties.
The most important such map is a certain isomorphism
\begin{equation}\label{eq: Ltwomap}
\Ltwomap : L^2 A \cofu{C} L^2 B\longrightarrow L^2(A \ast_{C} B)
\end{equation}
constructed in Proposition~\ref{prop:tensstandard} below.
The other structure maps that will be relevant for us express the fact that 
the fibre product is unital and associative:
\begin{equation}\label{eq: u and a for fiber product}
A \tens{C} C \cong A
\qquad\text{ and }\qquad 
	(A\tens{E}B)\tens{F}C \cong A\tens{E}(B\tens{F}C).
\end{equation}
The isomorphisms \eqref{eq: u and a for fiber product}
satisfy the usual triangle and pentagon axioms familiar from the theory of monoidal categories \cite[Proposition 9.2.8]{Timmermann2008}.

\subsection{The isomorphism $L^2 A \cofu{C} L^2 B\rightarrow L^2(A \ast_{C} B)$.
}\label{sec:condexp}
As a first step towards describing the isomorphism \eqref{eq: Ltwomap},
we develop a convenient way of denoting elements of $ L^2 A \cofu{C} L^2 B$ (see Notation~\ref{notation:fusionofLtwos} further down).
Recall that a linear map $\vnmap$ between von Neumann algebras is called \emph{positive} if it sends positive elements to positive elements.\footnote{If $A$ and $B$ are commutative von Neumann algebras, a positive map $\vnmap$ is automatically completely positive, meaning the induced maps $M_n(B)\to M_n(A)$ are also positive.}
Given $\vnmap$ as above, a \emph{non-normalised conditional expectation} is a positive $B$-bilinear normal map $\phi\colon A \rar B$.
We denote the cone of non-normalised conditional expectations by $\Omega(A,B)$. 

\begin{terminology}
Conditional expectations (i.e.~non-normalised conditional expectations that satisfy the normalisation condition $\phi(1)=1$) will not play any role in this paper. We therefore allow ourselves to use the term \emph{``conditional expectation''} for elements of $\Omega(A,B)$.
Similarly, we will use the term \emph{``state''} for elements of $\Omega(A,\mathbb{C})$. 
\end{terminology}

We denote by $\Hom^\tn{+}_B(L^2B, L^2A) \subset \Hom_B(L^2B, L^2A)$ the cone of positive maps, by which we mean those $B$-linear maps that map $L^2_+B$ to $L^2_+A$.
Since $L^2 B$ is spanned by its positive cone, such a map is entirely determined by its restriction to $L^2_+B$.
The space $\Hom_B(L^2B, L^2A)$ carries a $B$-valued inner product given by
$$
\langle \eta, \nu \rangle_B := \eta^*\nu\in B.
$$

Elements of $L^2_+A$ are in bijection with states on $A$. The bijection is denoted
\begin{equation}\label{eq:posconeposstate}
	\homtocond{(\;\!\cdot \;\!)}:L^2_+A \,\to\, \Omega(A,\mathbb{C}),
\end{equation}
and takes a vector $\xi\in L^2_+A$ to the state $\xi^2$ given by $\homtocond{\xi}(a):=\langle \xi, a \xi\rangle$. The inverse of this map is denoted $\phi\mapsto \sqrt{\phi}$. The above bijection generalises as follows:

\begin{lemma}\label{lem:homtocondexp}
	Let $B\rar A$ be a map of commutative von Neumann algebras. Then there is a bijection
\begin{equation}\label{eq: eta square}
\Hom^\tn{+}_B(L^2B, L^2A) \xrightarrow{\cong} \Omega(A,B)
\end{equation}
denoted $\eta\mapsto \homtocond{\eta}$. It sends $\eta\in \Hom^\tn{+}_B(L^2B, L^2A)$ to the map
$
	\homtocond{\eta}  \colon a \mapsto \langle \eta, a \eta \rangle_B.
$ 
\end{lemma}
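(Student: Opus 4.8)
The plan is to check that the proposed map is well defined, prove injectivity by reducing to scalar states, and then obtain surjectivity by disintegrating everything over $B$ and invoking the scalar bijection~\eqref{eq:posconeposstate} fibrewise.

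\textbf{Well-definedness.} For $\eta\in\Hom^\tn{+}_B(L^2B,L^2A)$ the formula $\homtocond{\eta}(a)=\langle\eta,a\eta\rangle_B=\eta^*a\eta$ lands in $\End_B(L^2B)=B$ (the last identity because $B$ is commutative and $L^2B$ is its standard form, so the $B$-linear endomorphisms are exactly $B'=B$). It is then routine that $\homtocond{\eta}$ is normal (a composite of normal maps cut down by $\eta$ and $\eta^*$), is $B$-bilinear (using $B$-linearity of $\eta$, $\eta^*$ and commutativity of the image of $B$ in $A$), and is positive: writing $a=b^*b$ gives $\homtocond{\eta}(a)=(b\eta)^*(b\eta)\ge 0$. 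Hence $\homtocond{\eta}\in\Omega(A,B)$.

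\textbf{Injectivity.} Let $\eta$ be positive and let $\psi\in\Omega(B,\mathbb C)$ be a state with associated positive vector $\sqrt\psi\in L^2_+B$. A direct computation gives, for every $a\in A$,
\[
\homtocond{(\eta\sqrt\psi)}(a)=\langle \eta\sqrt\psi, a\,\eta\sqrt\psi\rangle=\langle\sqrt\psi,\homtocond{\eta}(a)\sqrt\psi\rangle=\psi\big(\homtocond{\eta}(a)\big),
\]
so $\homtocond{(\eta\sqrt\psi)}=\psi\circ\homtocond{\eta}$. Since $\eta$ maps $L^2_+B$ into $L^2_+A$, the vector $\eta\sqrt\psi$ is positive, and the scalar bijection~\eqref{eq:posconeposstate} forces $\eta\sqrt\psi=\sqrt{\psi\circ\homtocond{\eta}}$. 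Thus $\eta$ is determined by $\homtocond{\eta}$ on the positive cone $L^2_+B$, and hence on its linear span, which is all of $L^2B$. This proves injectivity and simultaneously pins down the prospective inverse: $\sqrt\phi$ must send $\sqrt\psi\mapsto\sqrt{\psi\circ\phi}$.

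\textbf{Surjectivity.} This is the heart of the matter, and I would establish it by disintegration. Writing $B\cong L^\infty(Y,\nu)$ and $A\cong L^\infty(X,\mu)$, the map $B\to A$ corresponds to a measurable map $p\colon X\to Y$; fix a disintegration $\mu=\int_Y\mu_y\,d\nu(y)$ along $p$, giving $L^2B=\int^\oplus_Y\mathbb C\,d\nu(y)$ and $L^2A=\int^\oplus_Y L^2(X_y)\,d\nu(y)$ as $B$-modules. Under this decomposition an element of $\Hom^\tn{+}_B(L^2B,L^2A)$ is an essentially bounded measurable field $y\mapsto\xi_y\in L^2_+(X_y)$ (with $\xi_y=\eta(\sqrt\psi)$ read off fibrewise), and an element of $\Omega(A,B)$ is an essentially bounded measurable field $y\mapsto\phi_y\in\Omega(L^\infty(X_y),\mathbb C)$; under these identifications $\eta\mapsto\homtocond{\eta}$ becomes the fibrewise scalar assignment $\xi_y\mapsto\homtocond{(\xi_y)}$. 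Given $\phi\in\Omega(A,B)$ with field $\{\phi_y\}$, I define $\sqrt\phi$ to be the field $y\mapsto\sqrt{\phi_y}$ obtained by applying the inverse scalar bijection fibrewise. Measurability of the field follows from measurability of the scalar correspondence $\phi_y\leftrightarrow\sqrt{\phi_y}$, and essential boundedness from $\|\sqrt{\phi_y}\|^2=\phi_y(1)=\phi(1)(y)\in L^\infty(Y)$; hence $\sqrt\phi\in\Hom^\tn{+}_B(L^2B,L^2A)$ with $\homtocond{(\sqrt\phi)}=\phi$ fibrewise, and therefore globally.

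\textbf{Main obstacle.} The only genuine difficulty is the last step. At the global level the assignment $\phi\mapsto\sqrt\phi$ is badly nonlinear—already in the scalar case $\sqrt{\psi_1}+\sqrt{\psi_2}\ne\sqrt{\psi_1+\psi_2}$—so linearity and boundedness of the constructed $\sqrt\phi$ are invisible directly; it is exactly the passage to fibres that turns $\sqrt\phi$ into a manifestly bounded measurable field of vectors and makes these properties automatic. The technical cost is thus the disintegration bookkeeping: verifying that $\Hom^\tn{+}_B(L^2B,L^2A)$ and $\Omega(A,B)$ decompose into measurable fields as claimed, and that measurability and essential boundedness survive the assembly of the fibrewise square roots.
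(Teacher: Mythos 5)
Your well-definedness and injectivity arguments are sound, and they hold in full generality: the identity $\homtocond{(\eta\sqrt\psi)}=\psi\circ\homtocond{\eta}$ combined with the scalar bijection \eqref{eq:posconeposstate} does determine $\eta$ on $L^2_+B$ and hence on its span $L^2B$, and it correctly pins down what the inverse must be. (Your formula $\sqrt\phi\colon\sqrt\psi\mapsto\sqrt{\psi\circ\phi}$ agrees, on positive vectors, with the paper's inverse, which on the dense subspace $B\sqrt\mu$ is $b\sqrt\mu\mapsto b\sqrt{\mu\phi}$.)

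The genuine gap is in surjectivity. The lemma carries no separability hypothesis --- indeed, avoiding separability assumptions is an explicitly stated goal of the paper --- but every ingredient of your disintegration argument lives only in the standard/separable setting: identifying the homomorphism $B\to A$ with a point map $p\colon X\to Y$ invokes the Gelfand--Naimark theorem for \emph{standard} measure spaces, and the disintegration $\mu=\int_Y\mu_y\,d\nu(y)$, the direct-integral decomposition $L^2A=\int_Y^\oplus L^2(X_y)\,d\nu(y)$, and the theorem that $B$-linear operators are decomposable fields all fail or are unavailable for general (localizable, possibly non-separable) commutative von Neumann algebras. So as written, your argument proves the statement only for separable algebras. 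Even granting separability, two structural claims are asserted rather than proved and are exactly where the work lies: that every $\phi\in\Omega(A,B)$ disintegrates as an essentially bounded measurable field of fibre functionals $\phi_y$, and that $y\mapsto\sqrt{\phi_y}$ is again a measurable field.

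The paper's route sidesteps all of this: it decomposes $B=\bigoplus B_i$ (with a matching decomposition $A=\bigoplus A_i$) so that each summand admits a faithful state $\mu$ --- a reduction valid for arbitrary von Neumann algebras --- and then defines $\sqrt\phi$ directly as the closure of the densely defined map $b\sqrt\mu\mapsto b\sqrt{\mu\phi}$, checks independence of the choice of $\mu$, and verifies $\homtocond{\sqrt{\phi}}=\phi$ and $\sqrt{\homtocond{\eta}}=\eta$ by short computations using the adjoint formula $\sqrt{\phi}^*(a\sqrt{\mu\phi})=\phi(a)\sqrt{\mu}$. Note also that your ``main obstacle'' is milder than you suggest: linearity of $\sqrt\phi$ is built into this dense-domain definition (it is linear in $b$, not in the state), and boundedness is the one-line estimate $\|b\sqrt{\mu\phi}\|^2=\mu\big(b^*b\,\phi(1)\big)\le\|\phi(1)\|\,\|b\sqrt\mu\|^2$ --- no passage to fibres is needed. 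To repair your proof, either add a separability hypothesis (weakening the lemma) or replace the disintegration step by the direct-sum-plus-faithful-state construction above.
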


\begin{proof}
We give the proof in \S\ref{sec:proofoflem}.
\end{proof}

For $\phi\in\Omega(A,B)$, we denote by $\phi\mapsto \sqrt\phi$ the inverse of \eqref{eq: eta square}.
Before proceeding,
we record the statement of
Lemma~\ref{last lemma}, whose proof also appears in \S\ref{sec:proofoflem}:
\begin{equation}\label{last eq}
\sqrt{\phi}^* a \sqrt{\phi} = \phi(a).
\end{equation}

\begin{notation}\label{notation:fusionofLtwos}
Let $A\leftarrow C\rar B$ be homomorphisms between commutative von Neumann algebras.
If $\mu\in\Omega(C,\mathbb C)$ is a state on $C$, and if $\phi\in \Omega(A,C)$ and $\psi\in \Omega(B,C$) are conditional expectations, we write $$\sqrt{\phi}\cofu{\mu}\sqrt{\psi}\in L^2(A)\cofu{C}L^2(B)$$ for the element of corresponding to 
	$
	\sqrt{\phi}\otimes \sqrt{\mu \psi} \in \Hom_C(L^2(C), L^2(A))\otimes_{C} L^2(B).
	$ 
Such elements
span $L^2(A)\cofu{C}L^2(B)$.
\end{notation}
    
With the above notation, we get the following convenient description of the unitor $L^2(A) \cofu{B} L^2B \rar L^2A$, as
    \begin{equation}\label{e:formofunitor}
    \sqrt{\varphi}\underset{\mu}\boxtimes \sqrt{\psi}\mapsto
\sqrt{\psi}\sqrt{\mu\varphi},
    \end{equation}
where the $\sqrt{\psi}$ 
in the right hand side refers to the usual square root (as opposed to the inverse of \eqref{eq: eta square})
of the positive element of $B$ corresponding to $\psi:L^2B\to L^2B$.\bigskip

We now address the main goal of this section, which is the construction of the isomorphism \eqref{eq: Ltwomap}:

\begin{proposition}\label{prop:tensstandard}
Let $C\rar A$ and $C\rar B$ be maps of commutative von Neumann algebras. Then the map $\Ltwomap\colon 	L^2(A)\cofu{C}L^2(B) \rar  L^2(A\ast_{C}B)$ defined by
	$$
	\Ltwomap\colon 	\sqrt{\phi} \cofu{\mu}\sqrt{\psi} \mapsto 	\sqrt{\mu (\phi \otimes \psi)}
	$$
	is an isometric isomorphism (i.e.~unitary).
\end{proposition}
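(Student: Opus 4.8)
The plan is to verify that $\Ltwomap$ is a well-defined isometry on the spanning set of Notation~\ref{notation:fusionofLtwos}, and then that it is surjective; since an isometry with dense image between Hilbert spaces is unitary, this suffices. Note first that $D:=A\tens{C}B$ is again commutative, being the fibrewise spatial tensor product of commutative algebras, so $L^2 D$ carries the familiar description in which $L^2_+ D$ is in bijection with states on $D$ via \eqref{eq:posconeposstate}. The vector $\sqrt{\mu(\phi\otimes\psi)}$ appearing in the formula is exactly the positive vector attached to the state $\mu\circ(\phi\otimes\psi)$, where $\phi\otimes\psi\colon D\to C$ is the conditional expectation extending $a\otimes b\mapsto \phi(a)\psi(b)$ on the dense subalgebra $A\odot_C B$. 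Because the vectors $\sqrt{\phi}\cofu{\mu}\sqrt{\psi}$ span but are linearly dependent, well-definedness of $\Ltwomap$ is not automatic; however, it follows once the isometry property is known, since agreement of the two Gram matrices forces every linear relation among the source vectors to be carried to the corresponding relation among their images.

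The heart of the proof is therefore the single inner-product identity
\[
\big\langle \sqrt{\mu_1(\phi_1\otimes\psi_1)},\,\sqrt{\mu_2(\phi_2\otimes\psi_2)}\big\rangle_{L^2 D} = \big\langle \sqrt{\phi_1}\cofu{\mu_1}\sqrt{\psi_1},\,\sqrt{\phi_2}\cofu{\mu_2}\sqrt{\psi_2}\big\rangle_{L^2 A\cofu{C}L^2 B}
\]
for all $\phi_i\in\Omega(A,C)$, $\psi_i\in\Omega(B,C)$ and states $\mu_i\in\Omega(C,\mathbb C)$. Using Notation~\ref{notation:fusionofLtwos} and the fusion inner product \eqref{e:innerprodonfusion}, the right-hand side unfolds as $\langle (\sqrt{\phi_2}^*\sqrt{\phi_1})\,\sqrt{\mu_1\psi_1},\,\sqrt{\mu_2\psi_2}\rangle_{L^2 B}$, where $\sqrt{\phi_2}^*\sqrt{\phi_1}\in C$ is the $C$-valued inner product of the two positive maps $\sqrt{\phi_1},\sqrt{\phi_2}\in\Hom^{\tn{+}}_C(L^2 C,L^2 A)$ supplied by Lemma~\ref{lem:homtocondexp}. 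Both sides are then pairings $\langle\sqrt{\rho_1},\sqrt{\rho_2}\rangle$ of positive vectors attached to states, and the identity amounts to saying that passing to the fibre product multiplies these ``fidelity-type'' pairings fibrewise.

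I would establish this factorisation along the disintegration picture of the remark following Definition~\ref{def:vnfibreprod} (or, staying purely algebraic, by fixing a common dominating state $\lambda\in\Omega(C,\mathbb C)$ and expressing every square root relative to $\sqrt\lambda$): writing $C=L^\infty(Z)$ and $D=\int^\oplus_{z} A_z\boxtimes B_z\,d\lambda(z)$, the state $\mu_i\circ(\phi_i\otimes\psi_i)$ disintegrates into the product of the fibrewise densities of $\phi_i$ and $\psi_i$, weighted by $d\mu_i/d\lambda$. The key identity \eqref{last eq}, together with \eqref{eq:posconeposstate}, identifies $\sqrt{\phi_2}^*\sqrt{\phi_1}$ with the fibrewise pairing $z\mapsto\int_{A_z}\sqrt{\rho^{(1)}_z\rho^{(2)}_z}$ of the $A$-densities, so that the left-hand pairing factors, by Fubini and the multiplicativity of the fidelity inner product on product densities, as the product of the fibrewise $A$-pairing with the $L^2 B$-pairing of $\sqrt{\mu_1\psi_1}$ and $\sqrt{\mu_2\psi_2}$ — which is exactly the right-hand side. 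I expect this fibrewise factorisation to be the main obstacle, since it is where the definition of the fibre product as a fibrewise spatial tensor product must be married to the $L^2$-inner products, and where one must check independence of the auxiliary state $\lambda$.

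Granting the identity, $\Ltwomap$ is a well-defined isometry onto the closed span of the vectors $\sqrt{\mu(\phi\otimes\psi)}$. For surjectivity it remains to show these are total in $L^2 D$. As $L^2_+ D$ spans $L^2 D$ and is in bijection with states on $D$, it suffices that states of the form $\mu\circ(\phi\otimes\psi)$ are plentiful; this again follows from the disintegration, since finite linear combinations of the fibrewise products $\sqrt{d\mu/d\lambda}\cdot\sqrt{\rho^A}\cdot\sqrt{\rho^B}$ are dense in the fibred space $L^2 D$ because simple tensors are total there. Hence $\Ltwomap$ has dense image, and being an isometry between Hilbert spaces it is unitary, as claimed.
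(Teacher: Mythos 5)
Your top-level strategy is sound: checking that $\Ltwomap$ preserves inner products on the spanning set of Notation~\ref{notation:fusionofLtwos} does simultaneously give well-definedness (by the Gram-matrix argument) and isometry, and density of the image then yields unitarity. The gap is that the entire mathematical content of the proposition is concentrated in your ``single inner-product identity,'' and your argument for it does not go through at the stated level of generality. You propose to verify the identity by disintegrating over $C=L^\infty(Z)$ and invoking fibrewise multiplicativity of the Hellinger-type pairing, but the direct-integral picture $A\tens{C}B=\int^\oplus_z A_z\boxtimes B_z\,d\lambda(z)$ is only available when $C$ comes from a standard measure space, i.e.\ under separability hypotheses --- and the proposition (deliberately, as this is the point of the paper's von Neumann algebraic setup) assumes none. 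Your parenthetical algebraic alternative, ``fix a common dominating state $\lambda\in\Omega(C,\mathbb C)$,'' fails for the same reason: a general commutative von Neumann algebra admits no faithful normal state, and even granting one, your computation still needs to express the \emph{conditional expectations} $\phi_1,\phi_2$ (and $\psi_1,\psi_2$) relative to common references, which is a Radon--Nikodym-type problem you have not addressed. Concretely, the mixed pairing $\sqrt{\phi_2}^{\,*}\sqrt{\phi_1}\in C$ cannot be evaluated from Lemma~\ref{last lemma}, which only computes $\sqrt{\phi}^{\,*}a\sqrt{\phi}=\phi(a)$ for a \emph{single} $\phi$; your identification of $\sqrt{\phi_2}^{\,*}\sqrt{\phi_1}$ with a fibrewise fidelity integral, the factorisation ``by Fubini,'' and the independence of the auxiliary state $\lambda$ are all asserted rather than proved --- and you flag this yourself as ``the main obstacle.''

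The paper's proof shows how to avoid the obstacle entirely. It first reduces, by direct-sum decompositions $C=\bigoplus C_i$, $A_i=\bigoplus A_{ij}$, to the case where $C$ admits a faithful state $\mu$ and $A,B$ admit faithful conditional expectations $\phi,\psi$ onto $C$. It then works relative to this \emph{single} reference triple: every spanning vector is written as $(a\sqrt{\phi})\cofu{\mu}(b\sqrt{\psi})$, so all inner products involve only one $\phi$ and one $\psi$, and the isometry of $(a\sqrt{\phi})\cofu{\mu}(b\sqrt{\psi})\mapsto a\otimes b\,\sqrt{\mu(\phi\otimes\psi)}$ is a three-line computation from \eqref{e:innerprodonfusion} and \eqref{last eq}; surjectivity is immediate because the map restricts to a bijection between the dense copies of $A\odot_C B$ on both sides; finally one checks this reference-dependent map agrees with the reference-free formula $\Ltwomap$ by comparing the \emph{states} $\homtocond{(\,\cdot\,)}$ of the two positive vectors $\sqrt{\mu'(\phi'\otimes\psi')}$ and $a\otimes cb\,\sqrt{\mu(\phi\otimes\psi)}$, which is again a one-line evaluation on $x\otimes y$. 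If you want to salvage your write-up, perform the same direct-sum reduction first, then substitute $\sqrt{\phi_i}=a_i\sqrt{\phi}$, $\sqrt{\psi_i}=b_i\sqrt{\psi}$, $\sqrt{\mu_i}=c_i\sqrt{\mu}$ into your key identity: the troublesome mixed pairing becomes $\sqrt{\phi}^{\,*}a_2^*a_1\sqrt{\phi}=\phi(a_2^*a_1)$ by \eqref{last eq}, and your Gram-matrix scheme then closes without any disintegration.
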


\begin{proof}
Write $C$ as $C=\bigoplus C_i$ where each $C_i$ admits a faithful state, and consider the corresponding decompositions $A=\bigoplus A_i$ and $B=\bigoplus B_i$, with maps $C_i \rar A_i$ and $C_i \rar B_i$. We can further decompose $A_i=\bigoplus A_{ij}$ where each $A_{ij}$ admits a faithful conditional expectation $A_{ij} \rar C_i$, and similarly for $B_i$.
This reduces the statement of the proposition to the special case when $C$ admits faithful states, and $A$ and $B$ admit faithful conditional expectations to $C$.\footnote{When the von Neumann algebras $A$, $B$, and $C$ are representable on separable Hilbert spaces, then faithful conditional expectations and states always exist, and there is no need to consider direct sum decompositions.}

Let $\phi:A\to C$ and $\psi:B\to C$ be faithful conditional expectations, and let $\mu:C\to \mathbb C$ be a faithful state, so that
$\sqrt{\phi} \cofu{\mu}\sqrt{\psi}\in L^2(A)\cofu{C}L^2(B)$. Then $\mu \phi$ and $\mu \psi$ are faithful states on $A$ and $B$ respectively, and $\mu(\phi \otimes\psi)$ is a faithful state on $A \ast_{ C} B$.
We first define a map
\begin{align*}
     \Ltwomap_{\phi,\mu,\psi}\colon& L^2(A)\cofu{C}L^2(B) \rar  L^2(A\tens{C}B)\\
    & (a\sqrt{\phi})\cofu{\mu}(b\sqrt{\psi})\mapsto a\otimes b \sqrt{\mu (\phi \otimes \psi)}
\end{align*}
which a priori depends on $\phi$, $\psi$, and $\mu$.
We claim that $\Ltwomap_{\phi,\mu,\psi}$ is a unitary isomorphism, and that it agrees with $\Ltwomap$.

To see that $ \Ltwomap_{\phi,\mu,\psi}$ is isometric, observe that
$$
\big\|\Ltwomap_{\phi,\mu,\psi}\big((a\sqrt{\phi})\cofu{\mu}(b\sqrt{\psi})\big)\big\|^2=\mu (\phi(a^*a)\psi(b^*b)),
$$
and
\begin{align*}
\big\|(a\sqrt{\phi})\cofu{\mu}(b&\sqrt{\psi})\big\|^2 =
\big\langle
a\sqrt{\phi}\otimes b\sqrt{\mu\psi},a\sqrt{\phi}\otimes b\sqrt{\mu\psi}
\big\rangle
\\[-1mm]
&\quad\,\,\,\,\stackrel{\eqref{e:innerprodonfusion}}=
\big\langle
(\sqrt{\phi}^*a^*a\sqrt{\phi}) b\sqrt{\mu\psi}, b\sqrt{\mu\psi}
\big\rangle
\\&\stackrel{\eqref{last eq}}=
\big\langle
(\phi(a^*a) b\sqrt{\mu\psi}, b\sqrt{\mu\psi}
\big\rangle
=
\mu\psi(\phi(a^*a)b^*b)=\mu (\phi(a^*a)\psi(b^*b)),
\end{align*}
where the last equality uses the $C$-linearity of $\psi$. The map $\Ltwomap_{\phi,\mu,\psi}$ is unitary as it induces a bijection between the dense subsets $A\odot_C B$ on both sides.

To see that $ \Ltwomap_{\phi,\mu,\psi}$ agrees with $\Ltwomap$, it suffices to consider the values of both maps on vectors of the form $\sqrt{\phi'} \cofu{\mu'}\sqrt{\psi'}$ where $\sqrt{\phi'}=a\sqrt{\phi}$, $\sqrt{\psi'}=b\sqrt{\psi}$ and $\sqrt{\mu'}=c\sqrt{\mu}$, and $a$, $b$ and $c$ are positive elements of $A$, $B$ and $C$, respectively. We have
$$
\Ltwomap(\sqrt{\phi'} \cofu{\mu'}\sqrt{\psi'})=\sqrt{\mu'(\phi' \otimes \psi')},
$$
while $ \Ltwomap_{\phi,\mu,\psi}$ sends 
$$
\sqrt{\phi'} \cofu{\mu'}\sqrt{\psi'}=a\sqrt{\phi} \cofu{\mu}cb\sqrt{\psi}\quad\text{to}\quad  a\otimes cb \sqrt{\mu(\phi \otimes \psi)}.
$$
As both $\sqrt{\mu'(\phi' \otimes \psi')}$ and $a\otimes cb \sqrt{\mu(\phi \otimes \psi)}$ are positive elements of $L^2(A \ast_{C}B)$, to see that they are equal, it is sufficient to check that the corresponding states on $A \ast_{C}B$ agree, namely:
\[
\big(a\otimes cb \sqrt{\mu(\phi \otimes \psi)}\;\!\big)^2=\mu'(\phi' \otimes \psi').
\]
Indeed, for $x\otimes y \in A \ast_{C}B$, one checks:
    \begin{align*}
    \!\!&\!\!\big(a\otimes cb \sqrt{\mu(\phi \otimes \psi)}\;\!\big)^2(x\otimes y)=
        \mu(\phi \otimes \psi)\big( (x\otimes y)(a\otimes cb)^2\big) \\
    \!\!&\!\!=\mu(\phi(x a^2)\psi(y c^2b^2))
        =\mu(\phi'(x)\psi'(y)c^2)
         =\mu'(\phi'(x)\psi'(y))=
         \big(\mu'(\phi' \otimes \psi')\big)(x\otimes y).\qedhere
    \end{align*}
\end{proof}

\begin{remark}
Proposition~\ref{prop:tensstandard} also holds when $A$ and $B$ are arbitrary von Neumann algebras and $C$ is a commutative von Neumann algebra that maps to their centres. The proof is however significantly more involved. When $C$ is not commutative, the fiber product $A \ast_{C} B$ is still defined (by \cite{Timmermann2012}), but we are not aware of any statement that would be analogous to the one in Proposition~\ref{prop:tensstandard}, and that could hold in that context.
\end{remark}

The unitor \eqref{eq: u and a for fiber product}
of the fibre product is induced by the unitor $L^2A \cofu{C} L^2 C \cong L^2A$ for fusion.
These two unitors are compatible with the map $\Ltwomap$ in the following sense:

\begin{lemma}\label{lem: unitor = unitor}
Let $C\rar A$ be a map of commutative von Neumann algebras, and let $u:A\ast_{C}C\to A$ be the isomorphism in \eqref{eq: u and a for fiber product}.
Then the composite
\begin{equation}\label{eq: unitor = unitor}
L^2(A)\cofu{C}L^2(C) \xrightarrow{\,\,\,\Ltwomap\,\,\,}  L^2(A\tens{C}C)
\xrightarrow{L^2(u)} L^2(A)
\end{equation}
is the unitor for fusion.
\end{lemma}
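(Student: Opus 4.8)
The plan is to verify the claimed equality on the spanning elements $\sqrt{\phi}\cofu{\mu}\sqrt{\psi}$ described in Notation~\ref{notation:fusionofLtwos}, where now $B=C$, so $\psi\in\Omega(C,C)$. Since both sides of \eqref{eq: unitor = unitor} are bounded linear maps between Hilbert spaces, and such elements span $L^2(A)\cofu{C}L^2(C)$, it suffices to check agreement on them. First I would compute the left-hand composite: by Proposition~\ref{prop:tensstandard}, the map $\Ltwomap$ sends $\sqrt{\phi}\cofu{\mu}\sqrt{\psi}$ to $\sqrt{\mu(\phi\otimes\psi)}\in L^2(A\ast_C C)$, and then I would apply $L^2(u)$, where $u:A\ast_C C\to A$ is the unitor isomorphism of \eqref{eq: u and a for fiber product}. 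The key observation is that under the identification $A\ast_C C\cong A$, the element $x\otimes y$ (for $x\in A$, $y\in C$) corresponds to $x\cdot\vn{f}(y)$ where $C$ acts on $A$ via the structure map, so the state $\mu(\phi\otimes\psi)$ pushes forward to a concrete state on $A$, and I would identify its square root.

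For the right-hand side, I would use the explicit description of the fusion unitor recorded in \eqref{e:formofunitor}, which (in the case at hand, with the roles of the algebras matched to the lemma) sends $\sqrt{\phi}\cofu{\mu}\sqrt{\psi}$ to $\sqrt{\psi}\sqrt{\mu\phi}$, where $\sqrt{\psi}$ denotes the ordinary square root of the positive element of $C$ corresponding to $\psi$. I would then push this through the action of $C$ on $A$ and compare. The strategy for the comparison is the same one used in the proof of Proposition~\ref{prop:tensstandard}: since both candidate outputs are positive elements of $L^2(A)$, it is enough to check that the associated states on $A$ (via the bijection \eqref{eq:posconeposstate}) coincide. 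That is, I would compute $(\text{LHS output})^2$ and $(\text{RHS output})^2$ as states on $A$, evaluate each on an arbitrary $a\in A$ using \eqref{last eq} and the $C$-bilinearity of the conditional expectations, and verify they agree.

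The main obstacle, and the step requiring care, will be tracking the various square-root conventions and getting the bookkeeping of the $C$-action right across the two different descriptions of the unitors. The fusion unitor \eqref{e:formofunitor} mixes the ordinary square root of $\psi$ with the conditional-expectation square root of $\phi$, whereas the $\Ltwomap$-then-$L^2(u)$ route produces a single square root $\sqrt{\mu(\phi\otimes\psi)}$ that must be unpacked through $u$; reconciling these amounts to the identity that, for $a,a'\in A$ and with $\psi$ viewed as an element of $C$, one has $\mu(\phi\otimes\psi)\big((a')^*a\big)=\mu\big(\phi((a')^*a)\,\psi\big)$ when $A\ast_C C$ is identified with $A$. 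Once this identification is made explicit, reducing to states via \eqref{eq:posconeposstate} makes the final verification a short computation rather than a delicate completion argument. I expect no analytic difficulty here, precisely because the reduction to positive elements lets me avoid any subtleties about the Hilbert space completions.
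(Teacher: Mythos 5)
Your proposal is correct and takes essentially the same route as the paper's proof: both verify the identity on the spanning vectors $\sqrt{\phi}\cofu{\mu}\sqrt{\psi}$, use Proposition~\ref{prop:tensstandard} to compute $\Ltwomap$, unpack $u$ as $a\otimes x\mapsto ax$ so that $\mu(\phi\otimes\psi)$ corresponds under $u$ to the state $a\mapsto\mu(\psi\,\phi(a))$ on $A$, and compare with the formula \eqref{e:formofunitor} for the fusion unitor. Your comparison of the associated states of the two positive vectors via \eqref{eq:posconeposstate} is merely a slightly more explicit rendering of the paper's closing identity $\sqrt{\mu\psi\phi}=\sqrt{\psi}\sqrt{\mu\phi}$, so there is no substantive difference.
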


\begin{proof}
The map $u$ sends $a\otimes x\in A\ast_{C}C$ to $ax\in A$, and therefore pulls back $\mu\psi\varphi:A\to \mathbb{C}$ to 
$\mu(\varphi\otimes \psi):A\ast_{C}C\to \mathbb{C}$
for every $\varphi\in \Omega(A,C)$, $\psi\in\Omega(C,C)=C_+$, and $\mu\in\Omega(C,\mathbb C)$.
The map \eqref{eq: unitor = unitor} thus sends
$$\sqrt{\varphi}\underset{\mu}\boxtimes \sqrt{\psi}\mapsto
\sqrt{\mu(\varphi\otimes \psi)}
\mapsto \sqrt{\mu\psi\varphi} = \sqrt{\psi}\sqrt{\mu\varphi}.
$$
This agrees with the expression \eqref{e:formofunitor} for the unitor of fusion.
\end{proof}

We review the associativity isomorphism of the fiber product of commutative von Neumann algebras (\cite[Proposition 9.2.8]{Timmermann2008} treats this without the commutativity assumption):

\begin{lemma}\label{lem:fibreprodassoc}
	Let 
	\begin{equation}\label{eq: W}
		\begin{tikzcd}[row sep=tiny, column sep=small]
			B_1  	& 	& B_2  	& 	& B_3 \\
			&A_1\arrow[lu]\arrow[ru]& &\arrow[lu]A_2\arrow[ur]&
		\end{tikzcd}
	\end{equation}
    be a diagram of commutative von Neumann algebras. Then there is a canonical isomorphism
	\begin{equation}\label{eq: 3 = 3}
	(B_1\tens{A_1}B_2)\tens{A_2}B_3 \cong B_1\tens{A_1}(B_2\tens{A_2}B_3).
	\end{equation}
\end{lemma}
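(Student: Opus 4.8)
The plan is to deduce the associativity of the fibre product from the associativity of Connes fusion, transported through the unitary $\Ltwomap$ of Proposition~\ref{prop:tensstandard}. Recall that $B_1\tens{A_1}B_2$ is by definition the von Neumann algebra generated by (the images of) $B_1$ and $B_2$ acting on $L^2B_1\cofu{A_1}L^2B_2$, and the same holds for each iterated fibre product. Consequently, both sides of \eqref{eq: 3 = 3} are von Neumann algebras generated by $B_1$, $B_2$, and $B_3$. It therefore suffices to produce a unitary
\[
U\colon L^2\big((B_1\tens{A_1}B_2)\tens{A_2}B_3\big)\longrightarrow L^2\big(B_1\tens{A_1}(B_2\tens{A_2}B_3)\big)
\]
that intertwines the action of each generating algebra $B_i$ on the two sides; conjugation by such a $U$ then carries the first fibre product onto the second.

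First I would use $\Ltwomap$ to rewrite both Hilbert spaces as triple fusions. On the left, applying $\Ltwomap^{-1}$ to the outer fibre product (with base $A_2$) and then $\Ltwomap^{-1}\cofu{A_2}\id$ to the inner one gives an identification
\[
L^2\big((B_1\tens{A_1}B_2)\tens{A_2}B_3\big)\;\cong\;\big(L^2B_1\cofu{A_1}L^2B_2\big)\cofu{A_2}L^2B_3,
\]
and symmetrically the right-hand side is identified with $L^2B_1\cofu{A_1}\big(L^2B_2\cofu{A_2}L^2B_3\big)$. For the partial maps $\Ltwomap^{-1}\cofu{A_2}\id$ and $\id\cofu{A_1}\Ltwomap$ to be defined I need $\Ltwomap$ to be $A_2$-linear (respectively $A_1$-linear), which follows from the equivariance built into its construction in Proposition~\ref{prop:tensstandard}: $\Ltwomap$ intertwines the generating actions, and here the relevant $A_2$- and $A_1$-actions factor through $A_2\rar B_2$ and $A_1\rar B_2$. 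Note also that $L^2B_2$ is an $A_1$-$A_2$-bimodule whose two module structures commute, since $A_1$ and $A_2$ both map into the abelian algebra $B_2$; this is exactly the input needed to form the triple fusion.

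The middle step is the associator of Connes fusion,
\[
\alpha\colon \big(L^2B_1\cofu{A_1}L^2B_2\big)\cofu{A_2}L^2B_3\xrightarrow{\ \cong\ }L^2B_1\cofu{A_1}\big(L^2B_2\cofu{A_2}L^2B_3\big),
\]
a standard unitary for bimodules over the (here distinct) algebras $A_1$ and $A_2$ \cite{Sauvageot1983,Connes1994}. Composing the two $\Ltwomap$-identifications with $\alpha$ produces the desired unitary $U$. The main work, and the step I expect to be the real obstacle, is the equivariance bookkeeping: checking that $U$ matches the action of each $B_i$ on the left with its action on the right. Each $B_i$ acts through its own tensor factor $L^2B_i$; the associator $\alpha$ is equivariant for all three of these actions by its naturality; and each instance of $\Ltwomap$ is equivariant for the generating algebras by Proposition~\ref{prop:tensstandard}. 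Assembling these equivariances shows that $U$ conjugates the $B_i$-action on the left to the $B_i$-action on the right for each $i$. Since both fibre products are generated by $B_1$, $B_2$, $B_3$, conjugation by $U$ yields the isomorphism \eqref{eq: 3 = 3}.

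Finally, I would note that the isomorphism so constructed is canonical (independent of the auxiliary decompositions and choices of conditional expectation used to describe $\Ltwomap$), and that the coherence axioms referenced after \eqref{eq: u and a for fiber product} follow formally: the pentagon reduces to the pentagon coherence of the fusion associator $\alpha$, while the triangle identity follows by combining this with the compatibility between $\Ltwomap$ and the unitor established in Lemma~\ref{lem: unitor = unitor}.
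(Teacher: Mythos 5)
Your proposal is correct and takes essentially the same route as the paper: the paper's proof likewise uses the unitary $\Ltwomap$ of Proposition~\ref{prop:tensstandard} to identify each side of \eqref{eq: 3 = 3} with the von Neumann algebra generated by $B_1$, $B_2$, $B_3$ acting on the triple fusion $L^2(B_1)\cofu{A_1}L^2(B_2)\cofu{A_2}L^2(B_3)$, concluding by symmetry. The equivariance and associator bookkeeping you carry out explicitly is exactly what the paper's terser argument leaves implicit.
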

\begin{proof}
    By definition, the left hand side of \eqref{eq: 3 = 3} is generated by $B_1\ast_{A_1}B_2$ and $B_3$ on $L^2(B_1\ast_{A_1}B_2)\cofu{A_2}L^2(B_3)$.
    By Proposition~\ref{prop:tensstandard},
    this is the same as the algebra generated by 
    $B_1$, $B_2$ and $B_3$ acting on $L^2(B_1)\cofu{A_1}L^2(B_2)\cofu{A_2}L^2(B_3)$. 
By symmetry, this agrees with the right hand side of \eqref{eq: 3 = 3}.
\end{proof}

The associators of the fibre product and fusion are compatible, as we will show in Lemma~\ref{lem:sass}. The first step towards this is:
\begin{lemma}\label{lemma B <-A -> B -> B}
Given commutative von Neumann algebras
$B_1 \leftarrow A \rightarrow B_2 \rightarrow B_3$, the diagram
    \begin{center}
        \begin{tikzcd}[column sep = small]
        L^2B_1 \underset{A}{\boxtimes}L^2 B_3 \arrow[r,"\Ltwomap"]\arrow[d,"\cong"]& L^2(B_1 \underset{A}{\ast} B_3)\arrow[dd,"\cong"]\\
        \big(L^2B_1 \underset{A}{\boxtimes}L^2 B_2\big) \underset{B_2}{\boxtimes}L^2 B_3\arrow[d,"\Ltwomap"]&\\
         L^2(B_1 \underset{A}{\ast} B_2)\underset{B_2}{\boxtimes}L^2 B_3\arrow[r,"\Ltwomap"]&  L^2(B_1 \underset{A}{\ast} B_2 \underset{B_2}{\ast} B_3)
        \end{tikzcd}
    \end{center}
    commutes.
\end{lemma}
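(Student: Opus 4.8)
The plan is to verify the commutativity of this square by evaluating both composite maps on spanning elements of the form $\sqrt{\phi}\cofu{\mu}\sqrt{\psi}$ and tracking where they land. The left vertical isomorphism is an associator for fusion, the right vertical isomorphism is an associator for the fibre product (Lemma~\ref{lem:fibreprodassoc}), and all three horizontal maps are instances of the isometry $\Ltwomap$ from Proposition~\ref{prop:tensstandard}. Since every arrow in the diagram is an isometric isomorphism (unitaries compose to unitaries), it suffices to check that the two composites agree on a spanning set, and for this the explicit formula $\Ltwomap\colon\sqrt\phi\cofu{\mu}\sqrt\psi\mapsto\sqrt{\mu(\phi\otimes\psi)}$ is the key computational tool.

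First I would fix a state $\mu\in\Omega(A,\mathbb C)$ together with conditional expectations $\phi_1\in\Omega(B_1,A)$ and $\phi_3\in\Omega(B_3,A)$, so that $\sqrt{\phi_1}\cofu{\mu}\sqrt{\phi_3}$ is a typical spanning element of $L^2B_1\cofu{A}L^2B_3$. Going along the top of the square, $\Ltwomap$ sends this to $\sqrt{\mu(\phi_1\otimes\phi_3)}\in L^2(B_1\ast_A B_3)$, and the right vertical associator identifies $B_1\ast_A B_3$ as a subalgebra of $B_1\ast_A B_2\ast_{B_2}B_3$ (via the composite $A\to B_2\to B_3$ being used to re-express things), so I would track the image of the state $\mu(\phi_1\otimes\phi_3)$ under this inclusion. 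The main bookkeeping point is to pick a compatible conditional expectation for the $B_2$-factor: because $B_3$ sits over $B_2$, one uses $\psi\in\Omega(B_3,B_2)$ and a state-valued decomposition along $A\to B_2$ so that the triple-fusion element on the bottom right is $\sqrt{\mu(\phi_1\otimes\phi_2\otimes\psi)}$ for an appropriate $\phi_2$; the content is that $\phi_3$ factors as $\phi_2'\circ\psi$-type data, matching the two routes.

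Going along the left-and-bottom path, the left vertical fusion-associator rewrites $\sqrt{\phi_1}\cofu{\mu}\sqrt{\phi_3}$ as an element of $(L^2B_1\cofu{A}L^2B_2)\cofu{B_2}L^2B_3$, inserting the intermediate $B_2$ by the standard unit/associativity manipulation for fusion over $B_2$; then two applications of $\Ltwomap$ first produce $L^2(B_1\ast_A B_2)\cofu{B_2}L^2B_3$ and then the triple fibre product. At each stage I would apply the explicit formula for $\Ltwomap$ and the description of the fusion associator on spanning vectors, so that both composites are expressed as a single vector $\sqrt{(\text{explicit state})}$ in $L^2(B_1\ast_A B_2\ast_{B_2}B_3)$. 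The verification then reduces to checking that the two resulting states on $B_1\ast_A B_2\ast_{B_2}B_3$ coincide, which one does by evaluating on elementary tensors $x\otimes y\otimes z$ and using $C$-linearity of the conditional expectations and the relation $\sqrt\phi^*a\sqrt\phi=\phi(a)$ from \eqref{last eq}, exactly as in the proof of Proposition~\ref{prop:tensstandard}.

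The main obstacle I anticipate is purely notational rather than conceptual: one must be careful about which conditional expectations and states are used to name the spanning vectors on each of the five corners, since the same abstract element $\sqrt{\phi_1}\cofu{\mu}\sqrt{\phi_3}$ acquires different presentations after passing through the fusion associator (which inserts the $B_2$-factor). The honest work is to verify that the associator for fusion, when translated through $\Ltwomap$, matches the associator for the fibre product from Lemma~\ref{lem:fibreprodassoc}; but since both associators were constructed from the very same identification of $L^2$-spaces $L^2(B_1)\cofu{A}L^2(B_2)\cofu{B_2}L^2(B_3)$ (see the proof of Lemma~\ref{lem:fibreprodassoc}), this compatibility is essentially built in, and the diagram commutes by the uniqueness of unitary isomorphisms agreeing on a dense spanning set.
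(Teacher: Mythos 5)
Your proposal is correct and follows essentially the same route as the paper's proof: both verify the square on a spanning set using the explicit formula $\Ltwomap(\sqrt{\phi}\cofu{\mu}\sqrt{\psi})=\sqrt{\mu(\phi\otimes\psi)}$, with the key bookkeeping step being exactly the one you identify — factoring a dense set of conditional expectations $B_3\to A$ through $B_2$ as $\nu\psi$ with $\nu\in\Omega(B_2,A)$ faithful (after the usual direct-sum reduction) and $\psi\in\Omega(B_3,B_2)$, whereupon both routes land on $\sqrt{\mu\nu(\phi\otimes\id\otimes\psi)}$ using $A$-linearity of $\nu$. The only point where the paper is more careful than your sketch is that it makes this reduction and density statement explicit and then computes both composites outright, rather than appealing, as your last paragraph does, to the compatibility being ``built in'' to the construction of the fibre-product associator.
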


\begin{proof}
As before, we can reduce to the case when $A \rar B_2$ admits a faithful conditional expectation $\nu \in \Omega(B_2,A)$. Then there is a dense set of conditional expectations $B_3 \rar A$ that can be written as $\nu \psi$, for $\psi\in \Omega(B_3, B_2)$. This allows us to represent a dense set of elements of $L^2B_1 {\boxtimes}_A L^2 B_3$ as $\sqrt{\phi} \boxtimes_\mu \sqrt{\nu \psi}$, where $\phi\in \Omega(B_1,A)$ and $\mu \in \Omega(A,\mathbb{C})$. The route along the top of the diagram then becomes:
    $
    \sqrt{\phi} \boxtimes_\mu \sqrt{\nu \psi}\mapsto \sqrt{\mu(\phi \otimes \nu \psi)} \mapsto \sqrt{\mu(\phi \otimes \nu (\id \otimes \psi)) } = \sqrt{\mu \nu (\phi \otimes \id \otimes \psi)}$,
    where in the last equality we used that $\nu$ is $A$-linear.
    
    The route along the bottom requires a little more care. The first map down is a combination of a unitor and an associator. Viewing $L^2B_1 \boxtimes_AL^2 B_2 \boxtimes_{B_2}L^2B_3$ as the completion of $
    \Hom_{A}(L^2A,L^2B_1)
    \otimes_A L^2 B_2 \otimes_{B_2}\Hom_{B_2}(L^2B_2,L^2B_3)$, the first downwards map is given by
    $$
    \sqrt{\phi} \boxtimes_\mu \sqrt{\nu \psi}
    \mapsto
    \big(\underbrace{(\sqrt{\mu} \mapsto \sqrt{\mu \phi})\otimes\sqrt{\mu \nu}}_{\textstyle=\sqrt{\phi}\boxtimes_\mu \sqrt{\nu}}\otimes (\sqrt{\mu\nu}\mapsto \sqrt{\mu \nu \psi})\big).
    $$
    The second map down replaces $\sqrt{\phi}\boxtimes_\mu \sqrt{\nu}$ by $\sqrt{\mu(\phi \otimes\nu)}$, which we can rewrite as $\sqrt{\mu\nu(\phi \otimes \id)}$ because $\nu$ is $A$-linear.
    So, overall, the composition of the two downward maps with the bottom map reads:
    \begin{gather*}
    \sqrt{\phi} \boxtimes_\mu \sqrt{\nu \psi}
    \\[-2mm]\rotatebox{-90}{$\mapsto$}\\
    \big(\sqrt{\phi}\boxtimes_\mu \sqrt{\nu}\big)\otimes \big(\sqrt{\mu\nu}\mapsto \sqrt{\mu \nu \psi}\big)
    \\[-2mm]\rotatebox{-90}{$\mapsto$}\\
    \sqrt{\mu(\phi \otimes\nu)}\otimes \big(\sqrt{\mu\nu}\mapsto \sqrt{\mu \nu \psi}\big)
    \\[-2mm]\rotatebox{-90}{$=$}\\
    \sqrt{\phi\otimes \id}\boxtimes_{\mu \nu} \sqrt{\psi}
    \\[-2mm]\rotatebox{-90}{$\mapsto$}\\
    \sqrt{\mu \nu (\phi \otimes \id \otimes \psi)}.\qedhere
    \end{gather*}
\end{proof}

We finish this section by showing that
the associator \eqref{eq: 3 = 3} for the fiber product is compatible with the map $\Ltwomap$ from Proposition~\ref{prop:tensstandard}:

\begin{lemma}\label{lem:sass}
	For $A_1,A_2, B_1,B_2,B_3$ as in \eqref{eq: W}, the following diagram commutes:
    \begin{center}
		\begin{tikzcd}[column sep=small]
			L^2(B_1)\underset{A_1}\boxtimes\big(L^2(B_2)\underset{A_2}\boxtimes L^2(B_3)\big)\arrow[r,"\Ltwomap\,"]\arrow[d,"\cong"] &L^2(B_1)\underset{A_1}\boxtimes	L^2(B_2\tens{A_2}B_3)\arrow[r,"\Ltwomap"]&,L^2(B_1 \tens{A_1}\big(B_2 \tens{A_2}B_3)\big)\arrow[d,"\cong"]\\
            \big(L^2(B_1)\underset{A_1}\boxtimes L^2(B_2)\big)\underset{A_2}\boxtimes L^2(B_3) \arrow[r,"\Ltwomap"]&L^2(B_1\tens{A_1}B_2)\underset{A_2}\boxtimes L^2(B_3)  \arrow[r,"\Ltwomap\,"]& L^2\big((B_1 \tens{A_1}B_2) \tens{A_2}B_3\big) 
		\end{tikzcd}
	\end{center}
\end{lemma}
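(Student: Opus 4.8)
The plan is to verify this pentagon by an explicit computation on a dense spanning set, in the same spirit as Proposition~\ref{prop:tensstandard} and Lemma~\ref{lemma B <-A -> B -> B}. First I would invoke the direct-sum reduction used in the proof of Proposition~\ref{prop:tensstandard} to assume that $A_1$ and $A_2$ carry faithful states and that the maps $A_1\rar B_1$, $A_1\rar B_2$, $A_2\rar B_2$, $A_2\rar B_3$ admit faithful conditional expectations. The key organising device is the balanced description of the triple fusion $L^2(B_1)\cofu{A_1}L^2(B_2)\cofu{A_2}L^2(B_3)$ as a completion of $\Hom_{A_1}(L^2A_1,L^2B_1)\otimes_{A_1}L^2(B_2)\otimes_{A_2}\Hom_{A_2}(L^2A_2,L^2B_3)$, in which the middle copy of $L^2(B_2)$ is an $A_1$--$A_2$-bimodule. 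In this description the fusion associator (the left-hand vertical map) is the canonical identification, so a dense set of vectors can be written uniformly as $\sqrt{\phi_1}\otimes\xi\otimes\sqrt{\phi_3}$ with $\phi_1\in\Omega(B_1,A_1)$, $\phi_3\in\Omega(B_3,A_2)$ and $\xi\in L^2_+(B_2)$, independently of the bracketing.

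Next I would chase such a vector around both routes. For the top route one reads $\xi\otimes\sqrt{\phi_3}$ as an element of $L^2(B_2)\cofu{A_2}L^2(B_3)$, applies $\Ltwomap$, then fuses with $L^2(B_1)$ over $A_1$ and applies $\Ltwomap$ again; for the bottom route one first reads $\sqrt{\phi_1}\otimes\xi$ as an element of $L^2(B_1)\cofu{A_1}L^2(B_2)$ and proceeds symmetrically. Using the formula $\sqrt{\phi}\cofu{\mu}\sqrt{\psi}\mapsto\sqrt{\mu(\phi\otimes\psi)}$ from Proposition~\ref{prop:tensstandard} at each stage, both routes land on the square root of one and the same state built from $\phi_1$, $\xi^2$ and $\phi_3$, but a priori living on the two different triple fibre products $(B_1\tens{A_1}B_2)\tens{A_2}B_3$ and $B_1\tens{A_1}(B_2\tens{A_2}B_3)$. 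The right-hand vertical map is the fibre-product associator of Lemma~\ref{lem:fibreprodassoc}, which is induced on $L^2$ by the $*$-isomorphism $(x\otimes y)\otimes z\mapsto x\otimes(y\otimes z)$ and hence matches these two states tautologically. Since the vectors in question are positive, equality of the associated states forces equality of the vectors, exactly as at the end of the proof of Proposition~\ref{prop:tensstandard}.

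The real content, and the step I expect to be the main obstacle, is that to apply the formula for $\Ltwomap$ one must convert the balanced description locally into the form of Notation~\ref{notation:fusionofLtwos} (that is, pass between $\Hom_{A}(L^2A,-)\otimes_A-$ and $-\otimes_A\Hom_A(L^2A,-)$), and the two routes perform this conversion over the two \emph{different} bases $A_1$ and $A_2$. Reconciling these conversions, while keeping track of how the middle vector $\xi$ is redistributed, is precisely what Lemma~\ref{lemma B <-A -> B -> B} was set up to do: applied to the chain $B_1\leftarrow A_1\rar B_2\rar B_2\tens{A_2}B_3$ it rewrites the top composite through a fusion over $B_2$, reducing the pentagon to the already-established compatibilities together with the identity $\sqrt{\phi}^*a\sqrt{\phi}=\phi(a)$ of \eqref{last eq} and the $A$-linearity of conditional expectations. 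Once this bookkeeping is in place the two state computations coincide termwise, and the density and positivity arguments complete the proof.
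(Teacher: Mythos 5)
Your proposal is correct and is essentially the paper's own argument: both proofs reduce the pentagon to Lemma~\ref{lemma B <-A -> B -> B} by routing through the intermediate fusion over $B_2$ (inserting objects such as $(L^2B_1\cofu{A_1}L^2B_2)\cofu{B_2}(L^2B_2\cofu{A_2}L^2B_3)$ and $L^2\big((B_1\tens{A_1}B_2)\tens{B_2}(B_2\tens{A_2}B_3)\big)$), after which every remaining cell commutes by naturality of $\Ltwomap$ and of the unitors and associators. The only differences are that the paper invokes the lemma \emph{twice} — once for the chain $B_1\leftarrow A_1\rar B_2\rar B_2\tens{A_2}B_3$, as you do, and once in mirror image for the bottom composite, which your ``proceeds symmetrically'' implicitly covers — and that it dispenses entirely with your global dense-set state computation and positivity argument, since once the two pentagons are disposed of by the lemma the rest of the diagram commutes formally, with no further use of \eqref{last eq}.
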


\begin{proof}
The triangles and quadrilaterals in the following diagram visibly commute:
\[
\hspace{-2cm}
\begin{tikzpicture}[yscale=1.2]
\node (A0) at (-6,2) {$L^2B_1\underset{A_1}\boxtimes\big( L^2B_2\underset{A_2}\boxtimes L^2B_3\big)$};
\node (B0) at (0,2) {$L^2B_1\underset{A_1}\boxtimes	L^2(B_2\tens{A_2}B_3)$};
\node (C0) at (4.5,2) {$L^2\big(B_1 \tens{A_1}(B_2 \tens{A_2}B_3)\big)$};
\node (A1) at (-6,0) {$(L^2
B_1 \underset{A_1}\boxtimes L^2 B_2)\underset{B_2}\boxtimes \big(L^2B_2 \underset{A_2}\boxtimes L^2B_3\big)
$};
\node (B1) at (-.5,0) {$L^2
(B_1 \tens{A_1} B_2)\underset{B_2}\boxtimes L^2(B_2 \tens{A_2} B_3)
$};
\node (C1) at (4.5,0) {$L^2\big(
(B_1 \tens{A_1} B_2)\tens{B_2}(B_2 \tens{A_2} B_3)
\big)$};
\node (A2) at (-6,-2) {$\big(L^2B_1\underset{A_1}\boxtimes L^2B_2\big)\underset{A_2}\boxtimes L^2B_3$};
\node (B2) at (0,-2) {$L^2(B_1\tens{A_1}B_2)\underset{A_2}\boxtimes L^2B_3$};
\node (C2) at (4.5,-2) {$L^2\big((B_1 \tens{A_1}B_2) \tens{A_2}B_3\big)$};
\node (X) at (-3.125,1) {($L^2
B_1 \underset{A_1}\boxtimes L^2 B_2)\underset{B_2}\boxtimes L^2(B_2 \tens{A_2} B_3)
$};
\node (Y) at (-3.125,-1) {$L^2
(B_1 \tens{A_1} B_2)\underset{B_2}\boxtimes \big(L^2B_2 \underset{A_2}\boxtimes L^2B_3\big)
$};
\draw ($(A0.south)+(-2,0)$) to[bend right=50, looseness=1.5] ($(A2.north)+(-2,0)$);
\draw ($(C0.south)+(1.5,0)$) to[bend left=50, looseness=1.5] ($(C2.north)+(1.5,0)$);
\draw ($(A0.east)+(0,.1)$) -- ($(B0.west)+(0,.1)$)
($(B0.east)+(0,.1)$) -- ($(C0.west)+(0,.1)$);
\draw ($(A2.east)+(0,.1)$) -- ($(B2.west)+(0,.1)$)
($(B2.east)+(0,.1)$) -- ($(C2.west)+(0,.1)$);
\draw (X) -- (B1) (C1) -- (C0) ($(B1.east)+(0,.1)$) -- ($(C1.west)+(0,.1)$);
\draw (Y) -- (B1) (C1) -- (C2);
\draw (A0) -- (A1) -- (A2);
\draw (B0) --(X) -- (A1) -- (Y) -- (B2);
\end{tikzpicture}
\]
and two pentagons commute by Lemma~\ref{lemma B <-A -> B -> B}.
\end{proof}

\subsubsection{Proof of Lemma~\ref{lem:homtocondexp}}\label{sec:proofoflem}

If $B$ is separable, it admits a faithful state. More generally, every von Neumann algebra can be written as a direct sum of von Neumann algebras that admit faithful states.
By decomposing $B$ as a direct sum $\bigoplus B_i$ where each $B_i$ admits a faithful state, we get a corresponding decomposition $A=\bigoplus A_i$ such that the given map $B\rar A$ is a direct sum of maps $B_i\rar A_i$.
We may thus assume without loss of generality that our algebra $B$ admits a faithful state $\mu$.

We exhibit an inverse $\sqrt{\cdot}$ to the map \eqref{eq: eta square}. If $\phi \in \Omega(A,B)$ is a conditional expectation,
then $\mu \phi$ is a state on $A$, and we have a left $B$-linear map
    \begin{align}\label{eq: sqrt phi as a map}
	\sqrt{\phi}\,\colon\,	L^2B &\longrightarrow L^2A\\\notag
    \textstyle b\sqrt{\mu}& \textstyle \,\,\mapsto b\sqrt{\mu \phi}
    \end{align}
defined as the closure of the densely defined map
$B\sqrt \mu\to L^2A:b\sqrt{\mu} \mapsto b\sqrt{\mu \phi}$.
This is a bounded positive map, and we will see below that it is independent of the choice of faithful state $\mu:B\to \mathbb C$.

To see that the map \eqref{eq: sqrt phi as a map} is independent of $\mu$,
let us temporarily denote it $\sqrt[\uproot{2}\mu] \phi$ to emphasise its potential dependence on $\mu$.
If $\mu'=b_1\mu$ for some positive element with full support $b_1\in B$, then
\[
\sqrt[\uproot{2}\mu] \phi(b\sqrt \mu')
=\sqrt[\uproot{2}\mu] \phi(b\sqrt{b_1}\sqrt \mu)
=b\sqrt{b_1}\sqrt {\mu\phi}
=b\sqrt{b_1\mu\phi}
=b\sqrt{\mu'\phi}
=\sqrt[\uproot{2}\mu'] \phi(b\sqrt \mu')
\]
showing that $\sqrt[\uproot{2}\mu] \phi=\sqrt[\uproot{2}\mu'] \phi$.
For $\mu'$ an arbitrary faithful state on $B$, there exist full support positive elements $b_1,b_2\in B$ such that $\mu'':=b_1\mu=b_2\mu'$. It follows that $\sqrt[\uproot{2}\mu] \phi=\sqrt[\uproot{1}\mu^{\prime\!\!\;\prime}] \phi=\sqrt[\uproot{1}\mu'] \phi$.

Given $\phi \in \Omega(A,B)$, we claim that $\homtocond{\sqrt{\phi}}=\phi$. By definition,
	$
	\homtocond{\sqrt{\phi}}(a)= \sqrt{\phi}^* a \sqrt{\phi},
	$
    so we need to show that
    \begin{equation}\label{eq: phi(a)=sqrtphi* a sqrtphi}
    \phi(a)=\sqrt{\phi}^* a \sqrt{\phi}.
    \end{equation}
Since
	$
	\langle a \sqrt{\mu \phi},  \sqrt{\phi}(b\sqrt{\mu})\rangle=\langle a \sqrt{\mu \phi}, b \sqrt{\mu \phi}\rangle= \mu\phi(a^*b)=\mu(\phi(a)^*b) = \langle \phi(a)\sqrt{\mu}, b \sqrt{\mu} \rangle,
	$
	the adjoint $\sqrt{\phi}^*$ sends elements of the form $a\sqrt{ \mu\phi}$ to $ \phi(a)\sqrt{\mu}$:
\begin{equation}\label{eq: adj's formula}
\sqrt{\phi}^*(a\sqrt{ \mu\phi})=\phi(a)\sqrt{\mu}.
\end{equation}
We check \eqref{eq: phi(a)=sqrtphi* a sqrtphi} after evaluating on $b\sqrt \mu\in L^2B$:
	$$
    \sqrt{\phi}^* a \sqrt{\phi}\big(b\sqrt\mu\big)=\sqrt{\phi}^*\big(ab \sqrt{\mu \phi}\big)=\phi(ab) \sqrt{\mu}=\phi(a) b\sqrt{\mu}.
	$$
	So $\homtocond{\sqrt{\phi}}=\phi$.
    
    Given $\eta\in \Hom^\tn{+}_B(L^2B, L^2A)$ we now verify that $\sqrt{\eta^2}=\eta$.  The map $\sqrt{\homtocond{\eta}}$ sends $\srmu$ to $\sqrt{\mu \homtocond{\eta}}$. Since the state $\mu \homtocond{\eta}:A\to \mathbb C$ sends $a\in A$ to
	$$
	\mu\homtocond{\eta}(a)=\mu(\eta^* a \eta ) = \langle \sqrt{\mu}, \eta^* a \eta \sqrt{\mu}\rangle =\langle \eta \srmu , a \eta \srmu \rangle= \homtocond{(\eta \srmu)}(a),
	$$
    we have $\sqrt{\mu\homtocond{\eta}}=\eta\sqrt{\mu}$.
    It follows that $\sqrt{\homtocond{\eta}}(\srmu)=\eta\sqrt{\mu}$ for all $\sqrt{\mu}\in L^2B$, from which we get our desired conclusion: $\sqrt{\homtocond{\eta}}=\eta$.
\hfill $\square$
\medskip

We finish this section by recording a consequence of equation \eqref{eq: adj's formula}:

\begin{lemma}\label{last lemma}
For $\phi\in\Omega(A,B)$, we have
\begin{equation*}
\sqrt{\phi}^* a \sqrt{\phi} = \phi(a).
\end{equation*}
\end{lemma}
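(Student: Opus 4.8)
The plan is to obtain this identity as an immediate consequence of equation \eqref{eq: adj's formula}, which was already extracted during the proof of Lemma~\ref{lem:homtocondexp}. Indeed, the asserted identity is exactly equation \eqref{eq: phi(a)=sqrtphi* a sqrtphi}, so the work is already done and it remains only to record the short computation.

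First I would reduce to the case where $B$ admits a faithful state $\mu$, exactly as in the proof of Lemma~\ref{lem:homtocondexp}: writing $B=\bigoplus B_i$ with each $B_i$ carrying a faithful state yields a compatible decomposition $A=\bigoplus A_i$, and both sides of the claimed identity respect this direct sum, so it suffices to treat a single summand. In this situation the vectors $b\sqrt{\mu}$ with $b\in B$ span a dense subspace $B\sqrt{\mu}\subset L^2B$, which is where I will test the operator identity.

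Next I would recall the two facts produced in that proof. By \eqref{eq: sqrt phi as a map}, the map $\sqrt{\phi}\colon L^2B\to L^2A$ sends $b\sqrt{\mu}\mapsto b\sqrt{\mu\phi}$, and by \eqref{eq: adj's formula} its adjoint satisfies $\sqrt{\phi}^*(a\sqrt{\mu\phi})=\phi(a)\sqrt{\mu}$. Evaluating the composite $\sqrt{\phi}^*\,a\,\sqrt{\phi}$ on a generic vector $b\sqrt{\mu}$ then gives
\[
\sqrt{\phi}^*\,a\,\sqrt{\phi}\big(b\sqrt{\mu}\big)
=\sqrt{\phi}^*\big(ab\sqrt{\mu\phi}\big)
=\phi(ab)\sqrt{\mu}
=\phi(a)\,b\sqrt{\mu},
\]
where the last step uses the $B$-bilinearity of $\phi$. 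Since $B\sqrt{\mu}$ is dense in $L^2B$, this shows that $\sqrt{\phi}^*\,a\,\sqrt{\phi}$ acts as multiplication by the element $\phi(a)\in B$, which is the claim.

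There is essentially no obstacle here: the only point requiring any care is the reduction to a faithful state, so that the notation $\sqrt{\mu}$ is available and $\sqrt{\phi}$ is defined via \eqref{eq: sqrt phi as a map}; this reduction was already carried out for Lemma~\ref{lem:homtocondexp} and transfers verbatim. The lemma is thus best regarded as a convenient restatement of \eqref{eq: phi(a)=sqrtphi* a sqrtphi} for later reference.
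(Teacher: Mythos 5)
Your proof is correct and follows essentially the same route as the paper's: evaluate $\sqrt{\phi}^*\,a\,\sqrt{\phi}$ on vectors $b\sqrt{\mu}$ and invoke equation \eqref{eq: adj's formula} together with the $B$-bilinearity of $\phi$, exactly as the paper does (it too observes that this is just a restatement of \eqref{eq: phi(a)=sqrtphi* a sqrtphi} from the proof of Lemma~\ref{lem:homtocondexp}). Your added remarks on the direct-sum reduction and density are fine but are the same implicit ingredients the paper relies on.
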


\begin{proof}
Evaluating the two sides of the equation on $b\sqrt{\mu}$ for some state $\mu$ on $B$, we check:
$\sqrt{\phi}^* a \sqrt{\phi}(b\sqrt{\mu}) 
=
\sqrt{\phi}^*\big(ab\sqrt{\mu\phi}\big)
= \phi(ab)\sqrt{\mu}
= \phi(a)b\sqrt{\mu}$,
where the middle equality holds by \eqref{eq: adj's formula}.
\end{proof}

\section{Three-functor formalism for commutative von Neumann algebras}\label{sec4}
The goal of this section is to set up our three-functor formalism for commutative von Neumann algebras.  We first introduce the three functors (pullback, pushforward, and tensor product, which in our context become induction, restriction, and fusion), and their structure morphisms. After this, we introduce the structure maps expressing compatibility between the three functors. The final part of this section is devoted to proving all the axioms. 

\subsection{The category of geometric objects}
Our three-functor formalism lives on the category $\cvna$ of commutative von Neumann algebras. It is a contravariant three-functor formalism, meanning that it induces a three-functor formalism with the usual variance on the opposite category $\cvna^\op$ (which is a version of the category of measure spaces).

We will take all morphisms to be exceptional, meaning that the exceptional pushforward is defined an all morphisms. The distinguished squares in $\cvna$ are the fibre product squares \eqref{eq:tenssquare}; it follows from Lemma~\ref{lem:fibreprodassoc} (with $A_2=B_2$) that these squares glue.

\subsection{The three functors}\label{sec:the three functors}
We now introduce the functors from Definition~\ref{def:threeffdata}. Because our three-functor formalism is contravariant the role of ``pushforward'' $\push{(-)}$ will be played by a \emph{contravariant} functor
\begin{align*}
\cvna^\op \,&\longrightarrow\, \Wstarcat\\
A \qquad&\,\,\mapsto\,\,\, \lmod{A}\\
(f:B\to A) &\,\,\mapsto (\res{f}:\lmod{A}\to \lmod{B})\hspace{-1.5cm}
\intertext{corresponding to restriction (that views an $A$-module as an $B$-module along $\vn{f}$). The ``pullback'' $\pull{(-)}$ is replaced by a \emph{covariant} functor}
\cvna \,&\longrightarrow \symWcat\\
A \quad&\,\,\mapsto\,\,\,\,\,\,\,(\lmod{A},\boxtimes_A)\\
(f:B\to A) &\,\,\mapsto (\ind{f}:\lmod{B}\to \lmod{A})\hspace{-1.5cm}
\end{align*}
given by induction: 
$\ind{f}(M):= L^2A \cofu{B} M$ (where $L^2A$ is viewed as a $B$-module using $\vn{f}$).
The identitors and compositors for $\res{(-)}$ are identity natural transformations, and those for
$\ind{(-)}$ come from the associators and unitors of fusion.

The symmetric monoidal structure on $\lmod{A}$ is given by fusion over the commutative von Neumann algebra $A$.
To show that $\vnind{f}$ is monoidal, we need to provide isomorphisms
\begin{equation}\label{monoial structure maps}
\vnind{f} (L^2B) \cong L^2A
\qquad\,\,\,\text{and}
\qquad\,\,\,
	\vnind{f}(M\cofu{B}N)\cong \vnind{f}(M)\cofu{A}\vnind{f}(N)
\end{equation}
for all $M,N\in\lmod{B}$.
The first one $\vnind{f} (L^2B)= L^2A_\vn{f}\cofu{B}L^2B\cong L^2A$
comes from the unitor for $\cofu{B}$.
By Lemma~\ref{lem:mainlemma}, the second set of isomorphisms are determined by the special case $N=L^2B$ (or $N=M=L^2B$), in which case they are given by the following composition of (images under $\vnind{f}$ of) unitors
$$
\vnind{f}(M\cofu{B}L^2B)\cong \vnind{f}(M) \cong\vnind{f}(M)\cofu{A}L^2A\cong \vnind{f}(M)\cofu{A}\vnind{f}(L^2B).
$$
The coherences between the monoidal structure maps \eqref{monoial structure maps} and the compositors for $\ind{(-)}$ are all applications of Lemma~\ref{lem:mainlemma}, using the well known coherence diagrams for unitors and associators in the bicategory of von Neumann algebras \cite{Brouwer2003}.

\subsection{Projection and base change}
To give a three-functor formalism we need to specify natural isomorphisms expressing projection and base change (the second and third bullets in Definition~\ref{def:threeffdata}).

Given a map $\vnmap$ of commutative von Neumann algebras and a module $M \in \lmod{A}$, projection is expressed by a natural isomorphism $\vnres{f}(M)\cofu{B} - \cong \vnres{f}(M \cofu{A} \vnind{f} - )$.
When the argument is $L^2B$, we define this isomorphism to be 
\begin{equation}\label{eq:projectiononltwo}
\vnres{f}(M)\cofu{B}L^2B \cong \vnres{f}(M)\cong \vnres{f}(M\cofu{A}L^2A)\cong  \vnres{f}\big(M \cofu{A} \vnind{f} (L^2B) \big).
\end{equation}
We then use Lemma~\ref{lem:mainlemma} to extend this to other values of the argument. Note that all the isomorphisms used here are unitors and associators for fusion, so \eqref{eq:projectiononltwo} is natural in $M$.

Given $A\stackrel{f}{\leftarrow} C \stackrel{g}{\rightarrow} B$, and $\bar g$, $\bar f$ forming a distinguished square as in~\eqref{eq:tenssquare}, base change is a natural isomorphism $\vnind{f}\vnres{g}\cong \vnres{\bar{g}}\vnind{\bar{f}}$.
When the argument is $L^2B$, the base change isomorphism is defined to be the composite
\begin{align*}
\vnind{f} \vnres{g}(L^2B) \cong \vnind{f}(L^2B) &\cong L^2A \cofu{C} L^2B
\\
&\cong L^2(B \tens{C} A)
\cong \vnres{\bar{g}}(L^2(B \tens{C} A)\cofu{B}L^2B)
\cong \vnres{\bar{g}} \vnind{\bar{f}}(L^2B),
\end{align*}
where the third isomorphism is $\Ltwomap$ from Proposition~\ref{prop:tensstandard}.
The above isomorphism is one of $B$-modules so, by  Lemma~\ref{lem:mainlemma}, we can extend it to a definition of the base change isomorphism to all other values of the argument.

\subsection{Coherences}
We now proceed to check all the coherence diagrams for the structure maps constructed in the previous section. We have already argued that the restriction and induction each satisfy the required coherence to define functors $\vna^\op\rar \Wstarcat$, and $\cvna\rar \symWcat$. This only leaves checking the coherence conditions 1)--10) in Definition~\ref{def:threeffdata}. These are the ones that involve either projection or base-change, or both. For all these coherences, it suffices by Lemma~\ref{lem:mainlemma} to check the commutativity of the diagram on generators of the relevant categories. 

\subsubsection{Coherence for projection} 
We start by considering the coherence conditions 1)--4) in Definition~\ref{def:threeffdata}. With the exception of the base-change isomorphism, all structure morphisms in our three-functor formalism are compositions of the unitors and associators for fusion, making the verifications that the relevant diagrams commute very routine.

1) The diagram is
\begin{center}
\begin{tikzcd}
		M\cofu{A}N \arrow[r,"="]\arrow[d,"\cong"] & M\cofu{A}N\arrow[d,"="]\\
		M\cofu{A}(L^2A\cofu{A}N)\arrow[r,"\cong"]& M\cofu{A}N.
\end{tikzcd}
\end{center}
for $M,N\in \lmod{A}$.
The bottom horizontal map is a unitor.
The left vertical map is prescribed to be \eqref{eq:projectiononltwo} when $N=L^2A$, and extended to general $N$ by Lemma~\ref{lem:mainlemma}.
When $N=L^2A$, that left vertical map is easily checked to be a unitor.
By Lemma~\ref{lem:mainlemma}, it is therefore a unitor for all $A$-modules $N$ (this can also be checked by direct inspection of the formula \eqref{eq: nat transf -- generator argument}).
The diagram commutes.

2) Given maps $C\stackrel{g}\to B\stackrel{f}\to A$, and modules $M\in\lmod{A}$, and $N\in\lmod{C}$, the relevant diagram is
\begin{center}
\begin{tikzcd} 
    M\cofu{C} N \arrow[d,"="]\arrow[r,"\cong"]& M\cofu{B}(L^2B\cofu{C}N) \arrow[r,"\cong"]& 
    M\cofu{A}(L^2A\cofu{B}(L^2B\cofu{C}N))\arrow[d,"\cong"]\\
    M\cofu{C} N \arrow[rr,"\cong"]&& M\cofu{A} (L^2A\cofu{C}N)
\end{tikzcd}
\end{center}
The horizontal isomorphisms are defined through Lemma~\ref{lem:mainlemma} (applying \eqref{eq: nat transf -- generator argument} to \eqref{eq:projectiononltwo}), and are compositions of unitors and associators. 
The rightmost isomorphism is a unitor.
The diagram commutes.

3) For $\vn{f}\colon B\rar A$ and $M\in \lmod{A}$, we need to show that the diagram
\begin{center}
	\begin{tikzcd}
		M\cofu{B}L^2B \arrow[r,"\cong"]\arrow[d,"\cong"] & M\arrow[d,"\cong"]\\
		M\cofu{A}(L^2A\cofu{B}L^2B)\arrow[r,"\cong"]& M\cofu{A}L^2A.
	\end{tikzcd}
\end{center}
commutes.
Once again, all the isomorphisms are just unitors and associators.

4) Given $\vn{f}\colon B\rar A$ and $M\in \lmod{A}$, $N,P\in \lmod{B}$, we must show
\[
\begin{tikzcd}
    \vnres{f}M\otimes N \otimes P \arrow[r,"\cong"]\arrow[d,"\cong"]& \vnres{f}\big(M \otimes \ind{f}N \big)\otimes P \arrow[d,"\cong"]\\
    \vnres{f}\big(M \otimes \ind{f}(N\otimes P)\big) \arrow[r,"\cong"]& \vnres{f}\big(M \otimes \ind{f}N \otimes \ind{f}P \big)
\end{tikzcd}
\]
commutes. This diagram can be redrawn like this:
\[
\begin{tikzcd}
    M\boxtimes_B N \boxtimes_B P \arrow[r,"\cong"]\arrow[d,"\cong"]& (M \boxtimes_A (L^2A\boxtimes_B N) )\boxtimes_B P \arrow[d,"\cong"]\\
    M \boxtimes_A (L^2A\boxtimes_B (N\boxtimes_B P)) \arrow[r,"\cong"]& M \boxtimes_A (L^2A\boxtimes_BN) \boxtimes_A(L^2A\boxtimes_BP) 
\end{tikzcd}
\]
All the maps are composites of unitors and associators. The diagram commutes.

\subsubsection{Coherence for base change}
We now proceed to prove the coherences 
5)--8) in Definition~\ref{def:threeffdata}.
Recall that the definition of the base-change isomorphism uses the isomorphism $\Ltwomap$ from Proposition~\ref{prop:tensstandard}. For the diagrams in 5) and 6)
\[
\begin{tikzcd}
    \ind{f} \vnres{\id} \arrow[r,"\cong"]\arrow[dr,"\cong"']&\vnres{\id}\ind{f}  \arrow[d,"\cong"]\\ & \ind{f}
\end{tikzcd}\qquad
\begin{tikzcd}
    \ind{\id} \vnres{f} \arrow[r,"\cong"]\arrow[dr,"\cong"']& \vnres{f}\ind{\id} \arrow[d,"\cong"]\\ & \vnres{f}
\end{tikzcd}.
\]
involving the identitors, note that if $\vn{f}=\id$ in the fibre product square~\eqref{eq:tenssquare}, then $\vn{g}=\vn{\bar{g}}$ and $\vn{\bar{f}}=\id$, and $A\ast_{C}B\cong B$. Similarly, if $\vn{g}=\id$, we get $A\ast_{C}B\cong A$. By Lemma~\ref{lem: unitor = unitor}, the base-change isomorphism in these diagrams can be identified with a unitor. The two triangle diagrams are easily seen to commute.

7) Given distinguished squares
\begin{equation*} 
\begin{matrix}A_2\tens{A_1}(A_1\tens{C}B)\,\cong \\[12mm] \end{matrix}\!
\begin{tikzcd}[baseline=0]
    A_2\tens{C}B& \arrow[l,"\bar{f}_2"]A_1\tens{C} B& \arrow[l,"\bar{f}_1"] B \\
    A_2 \arrow[u,"\bar{\bar{g}}"] & \arrow[l,"f_2"] A_1 \arrow[u,"\bar{g}"] & \arrow[l,"f_1"] C \arrow[u,"g"]
\end{tikzcd}\qquad\quad
\vspace{-3mm}
\end{equation*}
we must show that
$$
\begin{tikzcd}
	\ind{(\vn{f}_2)}\ind{(\vn{f}_1)}\vnres{g} \arrow[r, "\cong" ] \arrow[d,"\cong"]& \ind{(\vn{f}_2)}\vnres{\bar{g}}\ind{(\vn{\bar{f}}_1)} \arrow[r,"\cong"]
	& \arrow[d,"\cong"] \vnres{\bar{\bar{g}}} \ind{(\vn{\bar{f}}_2)}\ind{(\vn{\bar{f}}_1)}\\
	\ind{(\vn{f}_2\vn{f}_1)}\vnres{g} \arrow[rr,"\cong"] && \vnres{\bar{\bar{g}}} \ind{(\vn{\bar{f}_2}\vn{\bar{f}_1})}
\end{tikzcd}
$$
commutes.
By Lemma~\ref{lem:sass}, it is enough to check that the diagram commutes when evaluated on $L^2B\in\lmod{B}$. Up to some unitors and associators, this is:
\[
\hspace{-1cm}
\begin{tikzpicture}[yscale=.9]
\node (A) at (0,3.5) {$L^2A_2\cofu{A_1}L^2A_1\cofu{C} L^2B$};
\node (B) at (4.5,3.5) {$L^2A_2\cofu{A_1}L^2( A_1\tens{C}B)$};
\node (C) at (2.2,1.5) {$L^2(A_2\tens{A_1}A_1)\cofu{C} L^2B$};
\node (D) at (6,1.5) {$L^2 (A_2\tens{A_1}A_1\tens{C}B)$};
\node (D') at (10,2.2) {$L^2 (A_2\tens{C}B)\underset{A_1\tens{C}B}\boxtimes L^2(A_1\tens{C}B)$};
\node (D'') at (10,3.5) {$L^2 (A_2\tens{A_1}A_1\tens{C}B)\underset{A_1\tens{C}B}\boxtimes L^2(A_1\tens{C}B)$};
\node (E) at (0,0) {$L^2A_2\cofu{C} L^2B$};
\node (F) at (10,0) {$L^2 (A_2\tens{C}B)$};
\draw [->] (A) --node[above]{$\scriptstyle \Ltwomap$} (B);
\draw [->] (A) --node[above]{$\scriptstyle \Ltwomap$} (C);
\draw [->] (A) -- (E);
\draw [->] ($(B.south)+(-.2,.05)$) --node[right]{$\scriptstyle \Ltwomap$} (D);
\draw [->] ($(C.east)+(0,.1)$) --node[above]{$\scriptstyle \Ltwomap$} ($(D.west)+(0,.1)$);
\draw [->] (C) -- (E);
\draw [->] (D) -- (F);
\draw [->] (E) --node[above]{$\scriptstyle \Ltwomap$} (F);
\draw [->] (D') -- (F);
\draw [->] (D) -- +(2,1.6);
\draw [shorten <=-3, ->] (D'') --node[left, pos=.2]{$\scriptscriptstyle \cong$} (D');
\draw [->] (B) -- (D'');
\end{tikzpicture}.
\]
The top left quadrilateral commutes by Lemma~\ref{lem:sass},
the left triangle commutes by Lemma~\ref{lem: unitor = unitor}, and the bottom quadrilateral commutes by naturality of $\Ltwomap$.
The top right triangle commutes by the definition of the base change isomorphism, and the rightmost quadrilateral commutes by naturality of unitors.

8) Given distinguished squares
\[
\hspace{-.2cm}
\begin{tikzcd}[baseline=0]
  \hspace{-2.75cm}(A\tens{C} B_1)\tens{B_1} B_2 \cong A\tens{C} B_2  \arrow[<-, r,"\bar{\bar{f}}"']\arrow[<-, d,"\bar{g}_2"]&\arrow[<-, d,"g_2"]B_2\\ A\tens{C} B_1\arrow[<-, r,"\bar{f}"'] \arrow[<-, d,"\bar{g}_1"] & B_1 \arrow[<-, d,"g_1"]\\ A \arrow[<-, r,"f"']& C
\end{tikzcd}
\]
we must show that
\[
\begin{tikzcd}
\ind{f}\vnres{g_1{}}\vnres{g_2{}}\arrow[r,"\cong"]\arrow[d,"\cong"]& \vnres{\bar{g}_1{}}\ind{\bar{f}}\vnres{g_2{}}\arrow[r,"\cong"] &\vnres{\bar{g}_1{}}\vnres{\bar{g}_2{}} \ind{\bar{\bar{f}}} \arrow[d,"\cong"]\\
\ind{f}\vnres{(g_2g_1)} \arrow[rr,"\cong"] & & \vnres{(\bar{g}_2\bar{g}_1)}\ind{\bar{\bar{f}}}
\end{tikzcd}
\]
commutes.
Invoking Lemma~\ref{lem:mainlemma} once again, it is enough to check the commutativity of the diagram upon evaluation on $L^2B_2\in\lmod{B_2}$. Up to some unitors and associators, this is:
\[
\begin{tikzcd}
L^2A\underset{C}\boxtimes L^2B_2\arrow[r,"\cong"]\arrow[-, dd, double, double distance=2]& L^2(A\tens{C}B_1)\underset{B_1}\boxtimes L^2B_2\arrow[r,"\Ltwomap"] &L^2(
A\tens{C} B_1\tens{B_1} B_2) \arrow[-, dd, "\cong"]\\
&
L^2A\underset{C}\boxtimes L^2B_2\underset{B_1}\boxtimes L^2B_1
\arrow[u, "\Ltwomap"'] \arrow[<-, shorten <=8, shorten >=-7, lu] \arrow[<-, shorten >=1, shorten <=-5,ld]
\\
L^2A\underset{C}\boxtimes L^2B_2 \arrow[rr,"\Ltwomap"] & & L^2(A\tens{C}B_2)
\end{tikzcd}.
\]
The top left triangle commutes by definition  (through equation \eqref{eq: nat transf -- generator argument}) of base-change,

and the pentagon commutes by Lemma~\ref{lemma B <-A -> B -> B}.

\subsubsection{Coherence between projection and base change}
The last two items that need checking in Definition~\ref{def:threeffdata} are 9) and 10).\vspace{-1mm}
The diagrams whose commutativity we must establish are~\eqref{eq:bcandprojcohone} and~\eqref{eq:bcandprojcohtwo}.
Let $A\stackrel{f} \leftarrow C\stackrel{g} \rightarrow B$ and 
$A\stackrel{\bar g} \rightarrow A\ast_{C} B\stackrel{\bar f} \leftarrow B$
be as in~\eqref{eq:tenssquare}.

9) By Lemma~\ref{lem:mainlemma}, it is sufficient to establish~\eqref{eq:bcandprojcohone} for $M=L^2B$ and $N=L^2C$. The diagram is:
\begin{equation*} 
\hspace{-.7cm}	
    \begin{tikzcd}[column sep=tiny, row sep=small]
		\vnind{f}\left(\vnres{g}(L^2B)\otimes L^2C\right)\arrow[r,"\cong"]\arrow[d,"\cong"]&\arrow[r,"\cong"]	\vnind{f} \vnres{g}(L^2B)\otimes \vnind{f}(L^2C)&\arrow[d,"\cong"]	\vnres{\bar{g}}\vnind{\bar{f}}(L^2B)\otimes \vnind{f}(L^2C)\\
		\vnind{f} \vnres{g}\left(L^2B\otimes \vnind{g}(L^2C)\right)\arrow[d,"\cong"]& &\arrow[d,"\cong"]\vnres{\bar{g}}\left(\vnind{\bar{f}}(L^2B)\otimes \vnind{\bar{g}} \vnind{f}(L^2C)\right)\\ 
	\vnres{\bar g}	\vnind{\bar{f}}\left(L^2B\otimes \vnind{g}(L^2C)\right)\arrow[r,"\cong"]& \vnres{\bar{g}}\left(\vnind{\bar{f}}(L^2B)\otimes \vnind{\bar{f}} \vnind{g}(L^2C)\right)\arrow[r,"\cong"]&\vnres{\bar{g}}\left(\vnind{\bar{f}}(L^2B)\otimes \vnind{(g\bar{f})} (L^2C)\right)
	\end{tikzcd}
\end{equation*}
which we expand to:

\[
\hspace{-2cm}
\begin{tikzpicture}
\node (A1) at (-.7,5) {$L^2A\underset C\boxtimes( L^2B \underset C\boxtimes
 L^2C)$};
\node (B1) at (4,5) {$(L^2A \underset C\boxtimes
 L^2B)\underset A\boxtimes(L^2A \underset C\boxtimes
 L^2C)$};
\node (C1) at (10,5) {$\big(L^2(A\tens{C}B)\underset B\boxtimes L^2B\big)\underset A\boxtimes(L^2A \underset C\boxtimes
 L^2C)$};
\node (A2) at (-.7,3.1) {$L^2A \underset C\boxtimes(
 L^2B\underset B\boxtimes(L^2B \underset C\boxtimes
 L^2C))$};
\node (B2) at (2.1,3.9) {$L^2A\underset C\boxtimes L^2B$};
\node (B2') at (4,3) {$L^2(A\tens{C}B)$};
\node (A3) at (-.7,1.5) {$L^2(A\tens{C}B)\underset B\boxtimes (L^2B \underset B\boxtimes (L^2B \underset C\boxtimes L^2C))$};
\node (B3) at (4,.5) {$\big(
L^2(A\tens{C}B)\underset B\boxtimes L^2B \big)\underset{A\tens{C}B}\boxtimes\big(
L^2(A\tens{C}B)\underset B\boxtimes (L^2B\underset C\boxtimes L^2C)
\big)$};
\node (C2) at (10,2.9) {$\big(
L^2(A\tens{C}B)\underset B\boxtimes L^2B \big)\underset{A\tens{C}B}\boxtimes\big(
L^2(A\tens{C}B)\underset A\boxtimes (L^2A\underset C\boxtimes L^2C)
\big)$};
\node[inner sep=1] (C3) at (10,1.5) {$\big(
L^2(A\tens{C}B)\underset B\boxtimes L^2B \big)\underset{A\tens{C}B}\boxtimes\big(
L^2(A\tens{C}B)\underset C\boxtimes L^2C
\big)$};
\draw (A2) -- (A1) -- (B1) --node[above]{$\scriptstyle \Ltwomap$} (C1) -- (C2);
\draw (A2) -- (B2) (B2') -- (C2);
\draw (A2) --node[right]{$\scriptstyle \Ltwomap$} (A3) -- (B3) -- (C3) (C2) -- (B2') -- (B3);
\draw (A1) -- (B2) (B2) -- (B1);
\draw (A3) -- (B2') (B2') -- (C1);
\draw[shorten <=-5] (B2) --node[above]{$\scriptstyle \Ltwomap$} (B2');
\draw[shorten >=3] (C2) -- (C3);
\draw[shorten >=7] (B2') -- (C3);
\end{tikzpicture}
\vspace{-2mm}
\]
The triangles in this diagram are composites of unitors and associators, hence commute. The remaining two squares involve $\Ltwomap$, and commute by naturality of the unitors and associators.

10) Once again by Lemma~\ref{lem:mainlemma}, it suffices to establish~\eqref{eq:bcandprojcohtwo} for $M=L^2A$ and $N=L^2C$. The diagram is:

\begin{equation*} 
\hspace{-.7cm}	
    \begin{tikzcd}[column sep=small,row sep= small]
		\vnres{f}(L^2A)\otimes \vnres{g}(L^2B)\arrow[r,"\cong"]\arrow[d,"\cong"]& 	\vnres{g}\left(\vnind{g} \vnres{f}(L^2A)\otimes L^2B\right)\arrow[r,"\cong"]& 	\vnres{g}\left(\vnres{\bar{f}}\vnind{\bar{g}}(L^2A)\otimes L^2B\right)\arrow[d,"\cong"]\\
		\vnres{f} \left(L^2A\otimes \vnind{f} \vnres{g}(L^2B)\right)\arrow[d,"\cong"]& &	\vnres{g}\vnres{\bar{f}}\left(\vnind{\bar{g}}(L^2A)\otimes  \vnind{\bar{f}}(L^2B)\right)\arrow[d,"\cong"]\\ 
		\left(L^2A\otimes \vnres{\bar{g}} \vnind{\bar{f}}(L^2B)\right)\arrow[r,"\cong"]&\vnres{f}\vnres{\bar{g}}\left(\vnind{\bar{g}}(L^2A)\otimes \vnind{\bar{f}}(L^2B)\right)\arrow[r,"\cong"]&\vnres{(f\bar{g})}\left(\vnind{\bar{g}}(L^2A)\otimes \vnind{\bar{f}}(L^2B)\right).
	\end{tikzcd}	
\end{equation*}
The three entries in the lower right corner are all equal (and connected by identity arrow), so the diagram simplifies to: 
\[
\hspace{-.9cm}
\begin{tikzpicture}
\node (A1) at (-.7,4.3) {$L^2A\underset C\boxtimes L^2B$};
\node (A1') at (.5,5) {$L^2B\underset C\boxtimes L^2A$};
\node (B1) at (4.2,5) {$(L^2B\underset C\boxtimes L^2A)\underset B\boxtimes L^2B$};
\node (C1) at (9,5) {$\big(L^2(A\tens{C}B)\underset A\boxtimes L^2A\big)\underset B\boxtimes L^2B$};
\node (A2) at (-.7,3) {$L^2A\underset A\boxtimes(L^2A\underset C\boxtimes L^2B)$};
\node (B2) at (2.8,3.8) {$L^2A\underset C\boxtimes L^2B$};
\node (B2') at (5,2.9) {$L^2(A\tens{C}B)$};
\node (A3) at (-.7,1.5) {$L^2A\underset A\boxtimes\big(L^2(A\tens{C}B)\underset B\boxtimes L^2B\big)$};
\node (C3) at (9,1.5) {$\big(
L^2(A\tens{C}B)\underset A\boxtimes L^2A \big)\underset{A\tens{C}B}\boxtimes\big(
L^2(A\tens{C}B)\underset B\boxtimes L^2B
\big)$};
\draw (A2) -- (A1) (A1') -- (B1) --node[above]{$\scriptstyle \Ltwomap$} (C1) -- (C3);
\draw (A2) -- (B2);
\draw[shorten <=-5, shorten >=3] (A1) -- (A1');
\draw[shorten <=-5] (B2) --node[above]{$\scriptstyle \Ltwomap$} (B2');
\draw (A2) --node[right]{$\scriptstyle \Ltwomap$} (A3) -- (C3);
\draw (A1) -- (B2) -- (A1') (B2) -- (B1);
\draw (A3) -- (B2') -- (C3) (B2') -- (C1);
\end{tikzpicture}
\]
The triangles consist of unitors and associators (and a symmetry for the top left triangle), hence commute. And the two squares involving $\Ltwomap$ commute by the naturality of unitors and associators.
\medskip

This finishes the proof that our three-functor formalism satisfies all the coherences in~Definition~\ref{def:threeffdata}.

\subsection{Duality and unitarity}
As mentioned in Section~\ref{sec:Variants and enhancements of three-functor formalisms}, our three-functor formalism for commutative von Neumann algebras is involutive (Definition~\ref{def:inv3ff}). This means in particular that each category $\lmod{A}$, for $A$ a commutative von Neumann algebra, comes equipped with a contravariant involutive functor
\[
\mathbb D_A:\lmod{A}\to \lmod{A}.
\]
The duality functor $\mathbb D_A$ sends an $A$-module $M$ to its complex conjugate $\overline{M}$ (with $A$-module structure given by $a\bar \xi:=\overline{a^*\xi}$), and sends an $A$-linear map $h:M\to N$ to its adjoint $h^*:N\to M$ viewed as a map $\overline{N}\to \overline{M}$. The assignment 
\[
(h:M\to N)\mapsto (\mathbb D_Ah:\overline{N}\to \overline{M})
\]
is $\mathbb C$-linear and contravariant.
It remains to argue that $\mathbb D_{(-)}$ commutes with $\vnind f$ and $\vnres f$ (as formulated in \eqref{eq: map xi}), and to check the coherences listed in Definition~\ref{def:inv3ff}. All the coherences follow trivially from the fact that every commutative von Neumann algebra is canonically the complexification of a real von Neumann algebra, and that the three functors $\vnind f$, $\vnres f$, $-\boxtimes_A-$ are complexifications of functors defined over~$\mathbb R$.

The categories $\lmod{A}$ are moreover dagger categories (with the dagger operation provided by the adjoint of a bounded linear map).
The functors $\vnind{f}$, $\vnres{f}$, $-\boxtimes_A-$, and $\mathbb D_A$ are dagger functors,
and all the natural isomorphisms ever mentioned are unitary.
So our three-functor formalism is a bi-involutive three-functor formalism.

Even more, the categories $\lmod{A}$ are $W^*$-categories
and the functors $(\vnind{f},\vnres{f})$ and $(M\boxtimes_A -,\overline M \boxtimes_A -)$ form adjoint pairs in the sense of \cite[Def~4.14]{Henriques2024} (by \cite[Lem~4.16]{Henriques2024}), making our three-functor formalism into a \emph{unitary three-functor formalism}.

\section{An application: Fell absorption for measure groupoids}\label{sec:fellabs}

In this last section, we illustrate the usefulness of our three-functor formalism with a concrete application. We prove that categories of representations of measure groupoids satisfy the Fell absorption principle: the regular representation tensorially absorbs every faithful unitary representation (see Corollary~\ref{last cor} for a precise statement).

Hereafter, all measure spaces are assumed to be standard, and all Hilbert spaces are assumed to be separable.
As explained in \S\ref{sec:threeffmeasure}, 
the three-functor formalism constructed in \S\ref{sec4} can be interpreted as a three-functor formalism on the category of measure spaces and measurable maps.

Recall that a \emph{groupoid} is a small category in which all morphisms are invertible.
Given a groupoid $G$, we write $G_0$ for its set of objects, $G_1$ for its set of morphisms, and $G_2:=G_1\times_{G_0}G_1$ for its set of composable pairs of morphisms. 
Let us write $s,t:G_1\to G_0$ for the maps which send a morphism to its source and target, respectively.
And let us write $p,m,q:G_2\to G_1$ for the projection onto the first factor, the composition of morphisms, and the projection onto the second factor.
For $i\in\{0,1,2\}$, let us also write $\pi_i:G_i\to *$ for the unique map to the point. Altogether, we have maps:
\begin{equation}\label{eq: measure groupoid : all the maps}
\begin{tikzpicture}[baseline=-20]

\node (A) at (0,0) {$G_2$};

\node (B) at (2,0) {$G_1$};

\node (C) at (4,0) {$G_0$};

\node (D) at (6,0) {$*$};

\draw [->] (A.east) + (0,.3) --node[above, yshift=1]{$p$} ($(B.west) + (0,.3)$);

\draw [->] (A) --node[above, yshift=-2]{$m$} (B);

\draw [->] (A.east) + (0,-.3) --node[below, yshift=1]{$q$} ($(B.west) + (0,-.3)$);

\draw [->] (B.east) + (0,.15) --node[above]{$s$} ($(C.west) + (0,.15)$);

\draw [->] (B.east) + (0,-.15) --node[below]{$t$} ($(C.west) + (0,-.15)$);

\draw [->] (C) --node[above]{$\pi_0$} (D);

\draw [->] (B.south)+(.15,-.1) to[bend right=30]node[below]{$\pi_1$} ($(D.south)+(-.2,.08)$);

\draw [->] (A.south)+(.1,-.2) to[bend right=45]node[below]{$\pi_2$} ($(D.south)+(-.07,0)$);

\end{tikzpicture}
\end{equation}

A \emph{measure groupoid} is a groupoid such that both $G_0$ and $G_1$ are standard measure spaces, and such that all the maps listed in \eqref{eq: measure groupoid : all the maps} are measurable (see \cite{anantharaman1956amenable,Mackey,RAMSAY1971253,Hahn1978a,Hahn1978}.

\begin{remark}
The map $\iota:G_0\to G_1$ which sends an object of $G$ to its identity morphism is not required to be measurable. In many examples of interest, the image of $\iota$ has measure zero, hence does not induce a map $L^\infty(G_1)\to L^\infty(G_0)$. 
\end{remark}

We recall the definition of an action of a measure groupoid $G$ on a measurable bundle of Hilbert spaces over $G_0$:

\begin{definition}
Let $H$ be measurable bundle over $G_0$.
A unitary representation of $G$ on $H$ is a unitary isomorphism
\[
\alpha\,:\,\,s^\smsq H\stackrel{\alpha}\longrightarrow
t^\smsq H,
\]
that satisfies $m^\smsq \alpha = q^\smsq \alpha \circ p^\smsq \alpha$.
\end{definition}

If we specialise the above definition to the case when $H\cong \pi_0^\smsq V$ for some Hilbert space $V$, 
then the relevant data becomes that of an isomorphism
\[
\alpha\,:\,\,s^\smsq\pi_0^\smsq V=\pi_1^\smsq V\stackrel{\alpha}\longrightarrow\pi_1^\smsq V=t^\smsq\pi_0^\smsq V
\]
satisfying once again $m^\smsq \alpha = q^\smsq \alpha \circ p^\smsq \alpha$.

The \emph{regular representation} has $t_\smsq \mathbb{I}$ as its underlying bundle, and action:
\[
\lambda\,:\,\,s^\smsq t_\smsq \mathbb{I}\to q_\smsq p^\smsq \mathbb{I}=q_\smsq \mathbb{I}=q_\smsq m^\smsq \mathbb{I}\to t^\smsq t_\smsq \mathbb{I},
\]
where we have applied base change twice, using the following two fibre product squares:
\begin{equation}\label{eq: 2SQ}
\begin{tikzcd}
    G_2 \arrow[r, "p"]\arrow[d,"q"] & G_1 \arrow[d, "t"] \\
    G_1 \arrow[r,"s"] & G_0
\end{tikzcd}
\quad \text{and} \quad
\begin{tikzcd}
    G_2 \arrow[r, "m"]\arrow[d,"q"] & G_1 \arrow[d, "t"] \\
    G_1 \arrow[r,"t"] & G_0
\end{tikzcd}
\end{equation}
(the associated diagrams of commutative von Neumann algebras are fibre product squares in the sense of Definition~\ref{def:vnfibreprod}).

Denote by $(\pi_0^\smsq V)_{\mathrm{triv}}$ the Hilbert bundle $\pi_0^\smsq V$ equipped with the trivial action (where $\alpha$ is the identity). Fell absorption is the statement that the regular representation tensorially absorbs any representation of the form $(\pi_0^\smsq V,\alpha:\pi_1^\smsq V\to\pi_1^\smsq V)$, up to a multiplicity given by $\dim(V)$:

\begin{theorem}\label{thm:fellabs}
Let $G$ be a measure groupoid, let $V$ be a Hilbert space, and let $\alpha$ be an action of $G$ on $\pi_0^\smsq V$.
Then the representations $(\pi_0^\smsq V)_{\mathrm{triv}}\otimes t_\smsq \mathbb{I}$ and $\pi_0^\smsq V\otimes t_\smsq \mathbb{I}$ are isomorphic,
with isomorphism $u$ provided by:
\[
u\,:\,\,\pi_0^\smsq V\otimes t_\smsq \mathbb{I}\to t_\smsq (t^\smsq \pi_0^\smsq V\otimes \mathbb{I})=t_\smsq \pi_1^\smsq V\stackrel{t_\smsq \alpha}\longrightarrow t_\smsq \pi_1^\smsq V=t_\smsq (t^\smsq\pi_0^\smsq V\otimes \mathbb{I})\to \pi_0^\smsq V\otimes t_\smsq \mathbb{I}.
\]
(The first and last isomorphisms in the definition of $u$ are given by projection.)
\end{theorem}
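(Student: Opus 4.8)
The map $u$ is a composite of the two projection isomorphisms and of $t_\smsq\alpha$, each of which is invertible, so $u$ is automatically an isomorphism of the underlying bundle $\pi_0^\smsq V\otimes t_\smsq\mathbb{I}$. The whole content of the theorem is therefore that $u$ \emph{intertwines} the two $G$-actions, i.e.\ that the square
\[
\begin{tikzcd}[column sep=large]
s^\smsq(\pi_0^\smsq V\otimes t_\smsq\mathbb{I})\arrow[r,"s^\smsq u"]\arrow[d,"\alpha\otimes\lambda"']&
s^\smsq(\pi_0^\smsq V\otimes t_\smsq\mathbb{I})\arrow[d,"\mathrm{id}\otimes\lambda"]\\
t^\smsq(\pi_0^\smsq V\otimes t_\smsq\mathbb{I})\arrow[r,"t^\smsq u"]&
t^\smsq(\pi_0^\smsq V\otimes t_\smsq\mathbb{I})
\end{tikzcd}
\]
commutes among bundles over $G_1$ (the monoidal comparison isomorphisms for $s^\smsq$ and $t^\smsq$ being suppressed), where the left vertical arrow is the action of the source representation $\pi_0^\smsq V\otimes t_\smsq\mathbb{I}$ and the right vertical arrow that of the target representation $(\pi_0^\smsq V)_{\mathrm{triv}}\otimes t_\smsq\mathbb{I}$.

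Writing $P:\pi_0^\smsq V\otimes t_\smsq\mathbb{I}\xrightarrow{\cong}t_\smsq\pi_1^\smsq V$ for the projection isomorphism (using $t^\smsq\pi_0^\smsq V=\pi_1^\smsq V$), the definition of $u$ reads $u=P^{-1}\circ t_\smsq\alpha\circ P$. The plan is to substitute this into the square and conjugate the two verticals by $s^\smsq P$ and $t^\smsq P$, reducing the claim to the commutativity of
\[
\begin{tikzcd}[column sep=large]
s^\smsq t_\smsq\pi_1^\smsq V\arrow[r,"s^\smsq(t_\smsq\alpha)"]\arrow[d,"\widetilde\beta_{\mathrm{src}}"']&
s^\smsq t_\smsq\pi_1^\smsq V\arrow[d,"\widetilde\beta_{\mathrm{tgt}}"]\\
t^\smsq t_\smsq\pi_1^\smsq V\arrow[r,"t^\smsq(t_\smsq\alpha)"]&
t^\smsq t_\smsq\pi_1^\smsq V
\end{tikzcd}
\]
where $\widetilde\beta_{\mathrm{src}}=(t^\smsq P)\circ(\alpha\otimes\lambda)\circ(s^\smsq P)^{-1}$ and $\widetilde\beta_{\mathrm{tgt}}=(t^\smsq P)\circ(\mathrm{id}\otimes\lambda)\circ(s^\smsq P)^{-1}$ are the transported source and target actions.

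The decisive step is to apply base change to the two fibre-product squares \eqref{eq: 2SQ}. The first yields $s^\smsq t_\smsq\cong q_\smsq p^\smsq$ and the second $t^\smsq t_\smsq\cong q_\smsq m^\smsq$; together with the canonical identifications $p^\smsq\pi_1^\smsq V=\pi_2^\smsq V=m^\smsq\pi_1^\smsq V$ (as $\pi_1 p=\pi_2=\pi_1 m$) these identify both corners $s^\smsq t_\smsq\pi_1^\smsq V$ and $t^\smsq t_\smsq\pi_1^\smsq V$ with $q_\smsq\pi_2^\smsq V$. By naturality of base change the two horizontal maps become $q_\smsq(p^\smsq\alpha)$ and $q_\smsq(m^\smsq\alpha)$, so that the legs $p$ and $m$ of the groupoid appear automatically, one from each square. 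It then remains to transport the verticals: using the compatibility between projection and base change (item~9 of Definition~\ref{def:threeffdata}) I would show that the untwisted action $\widetilde\beta_{\mathrm{tgt}}$ transports to the identity of $q_\smsq\pi_2^\smsq V$, while the twist $\alpha$ carried by the $\pi_0^\smsq V$-factor in $\widetilde\beta_{\mathrm{src}}$ is pulled onto the remaining leg, contributing $q_\smsq(q^\smsq\alpha)$. After these identifications the square becomes an equality of endomorphisms of $q_\smsq\pi_2^\smsq V$ which is exactly $q_\smsq$ applied to the cocycle identity $m^\smsq\alpha=q^\smsq\alpha\circ p^\smsq\alpha$ (reinterpreted, via the canonical isomorphisms of the squares, as an identity of endomorphisms of $\pi_2^\smsq V$): all three structure maps $p$, $q$, $m$ enter, with $p$ and $m$ coming from the two squares and $q$ from the transported twist.

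I expect the main obstacle to lie in this last identification, namely in commuting the projection isomorphism $P$ past the base-change isomorphisms that define $\lambda$ while keeping precise track of which of $p$, $q$, $m$ each occurrence of $\alpha$ is ultimately pulled back along (and of the exact order and orientation forced by the source/target conventions). This is a coherence bookkeeping problem, to be handled by the compatibilities 9) and 10) of Definition~\ref{def:threeffdata} together with naturality of the unitors, associators and of $\Ltwomap$. Once it is settled, the collapse of the entire square to $q_\smsq$ of the groupoid cocycle is forced, and $u$ is the desired isomorphism of representations.
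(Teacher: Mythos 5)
Your outline is essentially the paper's proof. The paper also reduces the theorem to one large commutative diagram: its four outer rectangles are instances of coherence 9) of Definition~\ref{def:threeffdata} applied to the two fibre product squares \eqref{eq: 2SQ} (item 10) is in fact not needed), the remaining cells are naturality, unitor and associator coherences, and the middle rectangle is exactly $q_\smsq$ applied to the cocycle identity $m^\smsq \alpha = q^\smsq \alpha \circ p^\smsq \alpha$, with the twist entering as $q_\smsq q^\smsq \alpha$ because the projection isomorphism for $q$ pulls the $\pi_1^\smsq V$-factor back along $q$ --- precisely the mechanism you describe.

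There is, however, one genuine slip: the orientation of your intertwining square. You take the left vertical to be $\alpha\otimes\lambda$ (source) and the right to be $\mathrm{id}\otimes\lambda$ (target), i.e.\ you assert that $u$ is a morphism of representations from the $\alpha$-twisted one to the trivial one. In the paper's diagram it is the other way around: the left vertical is $\mathrm{id}\otimes\lambda$ and the right is $\alpha\otimes\lambda$, so that $u$ intertwines $(\pi_0^\smsq V)_{\mathrm{triv}}\otimes t_\smsq\mathbb{I} \to \pi_0^\smsq V\otimes t_\smsq\mathbb{I}$. This matters at the final step: carrying out your own transports (horizontals becoming $q_\smsq p^\smsq\alpha$ and $q_\smsq m^\smsq\alpha$, the untwisted vertical becoming the canonical identification, the twist contributing $q_\smsq q^\smsq\alpha$), your square asserts $q_\smsq p^\smsq\alpha = q_\smsq m^\smsq\alpha \circ q_\smsq q^\smsq\alpha$, which is \emph{not} $q_\smsq$ of the cocycle identity --- it is the identity satisfied by $\alpha^{-1}$, equivalently by $u^{-1}$. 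With the paper's orientation the reduced square reads $q_\smsq q^\smsq\alpha \circ q_\smsq p^\smsq\alpha = q_\smsq m^\smsq\alpha$, which is exactly $q_\smsq$ applied to $m^\smsq\alpha = q^\smsq\alpha\circ p^\smsq\alpha$. Since the theorem only claims that the two representations are isomorphic, flipping the direction in which $u$ is declared to intertwine repairs your argument with no other change; the coherence bookkeeping you defer is then exactly the content of the paper's four outer rectangles, and it goes through as you anticipate.
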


\begin{proof}
The map $u$ is clearly an isomorphism.
We need to show that it is also a morphism of $G$-representations, ie. that it intertwines the action of the groupoid. This is the content of the following diagram:

\[
\hspace{-3.2cm}\begin{tikzpicture}

\node (A1) at (0,9) {$s^\smsq (\pi_0^\smsq V\otimes t_\smsq\mathbb{I})$};

\node (B1) at (0,7.5) {$s^\smsq \pi_0^\smsq V\otimes s^\smsq t_\smsq\mathbb{I}$};

\node (C1) at (0,6) {$\pi_1^\smsq V\otimes q_\smsq p^\smsq\mathbb{I}$};

\node (D1) at (0,4.5) {$\pi_1^\smsq V\otimes q_\smsq\mathbb{I}$};

\node (E1) at (0,3) {$\pi_1^\smsq V\otimes q_\smsq m^\smsq\mathbb{I}$};

\node (F1) at (0,1.5) {$t^\smsq \pi_0^\smsq V\otimes t^\smsq t_\smsq\mathbb{I}$};

\node (G1) at (0,0) {$t^\smsq (\pi_0^\smsq V\otimes t_\smsq\mathbb{I})$};

\node (A2) at (3,9) {$s^\smsq t_\smsq (t^\smsq \pi_0^\smsq V\otimes\mathbb{I})$};

\node (B2) at (3,8) {$q_\smsq p^\smsq (t^\smsq \pi_0^\smsq V\otimes\mathbb{I})$};

\node (B'2) at (3,7) {$q_\smsq (p^\smsq \pi_1^\smsq V\otimes p^\smsq\mathbb{I})$};

\node (C2) at (3,6) {$q_\smsq (q^\smsq \pi_1^\smsq V\otimes p^\smsq\mathbb{I})$};

\node (D2) at (3,4.5) {$q_\smsq (q^\smsq \pi_1^\smsq V\otimes\mathbb{I})$};

\node (E2) at (3,3) {$q_\smsq (q^\smsq \pi_1^\smsq V\otimes m^\smsq\mathbb{I})$};

\node (F2) at (3,2) {$q_\smsq (m^\smsq \pi_1^\smsq V\otimes m^\smsq\mathbb{I})$};

\node (F'2) at (3,1) {$q_\smsq m^\smsq (t^\smsq \pi_0^\smsq V\otimes\mathbb{I})$};

\node (G2) at (3,0) {$t^\smsq t_\smsq (t^\smsq \pi_0^\smsq V\otimes\mathbb{I})$};

\node (A3) at (6,9) {$s^\smsq t_\smsq \pi_1^\smsq V$};

\node (B3) at (6,7.6) {$q_\smsq p^\smsq \pi_1^\smsq V$};

\node (D3) at (6,4.5) {$q_\smsq \pi_2^\smsq V$};

\node (F3) at (6,1.4) {$q_\smsq m^\smsq \pi_1^\smsq V$};

\node (G3) at (6,0) {$t^\smsq t_\smsq \pi_1^\smsq V$};

\node (A4) at (9,9) {$s^\smsq t_\smsq \pi_1^\smsq V$};

\node (B4) at (9,7.6) {$q_\smsq p^\smsq \pi_1^\smsq V$};

\node (C4) at (9,6.5) {$q_\smsq \pi_2^\smsq V$};

\node (D4) at (9,5.2) {$q_\smsq q^\smsq \pi_1^\smsq V$};

\node (D'4) at (9,3.8) {$q_\smsq q^\smsq \pi_1^\smsq V$};

\node (E4) at (9,2.5) {$q_\smsq \pi_2^\smsq V$};

\node (F4) at (9,1.4) {$q_\smsq m^\smsq \pi_1^\smsq V$};

\node (G4) at (9,0)  {$t^\smsq t_\smsq \pi_1^\smsq V$};

\node (A5) at (12,9) {$s^\smsq t_\smsq (t^\smsq \pi_0^\smsq V\otimes\mathbb{I})$};

\node (B5) at (12,8.1) {$q_\smsq p^\smsq (t^\smsq \pi_0^\smsq V\otimes\mathbb{I})$};

\node (B'5) at (12,7.2) {$q_\smsq (p^\smsq \pi_1^\smsq V\otimes p^\smsq\mathbb{I})$};

\node (C5) at (12,6.3) {$q_\smsq (q^\smsq \pi_1^\smsq V\otimes p^\smsq\mathbb{I})$};

\node (D5) at (12,5.2)  {$q_\smsq (q^\smsq \pi_1^\smsq V\otimes\mathbb{I})$};

\node (D'5) at (12,3.8)  {$q_\smsq (q^\smsq \pi_1^\smsq V\otimes\mathbb{I})$};

\node (E5) at (12,2.7) {$q_\smsq (q^\smsq \pi_1^\smsq V\otimes m^\smsq\mathbb{I})$};

\node (F5) at (12,1.8) {$q_\smsq (m^\smsq \pi_1^\smsq V\otimes m^\smsq\mathbb{I})$};

\node (F'5) at (12,.9) {$q_\smsq m^\smsq (t^\smsq \pi_0^\smsq V\otimes\mathbb{I})$};

\node (G5) at (12,0) {$t^\smsq t_\smsq (t^\smsq \pi_0^\smsq V\otimes\mathbb{I})$};

\node (A6) at (15,9) {$s^\smsq (\pi_0^\smsq V\otimes t_\smsq\mathbb{I})$};

\node (B6) at (15,7.6) {$s^\smsq \pi_0^\smsq V\otimes s^\smsq t_\smsq\mathbb{I}$};

\node (C6) at (15,6.3) {$\pi_1^\smsq V\otimes q_\smsq p^\smsq\mathbb{I}$};

\node (D6) at (15,5.2) {$\pi_1^\smsq V\otimes q_\smsq\mathbb{I}$};

\node (D'6) at (15,3.8)  {$\pi_1^\smsq V\otimes q_\smsq\mathbb{I}$};

\node (E6) at (15,2.7) {$\pi_1^\smsq V\otimes q_\smsq m^\smsq\mathbb{I}$};

\node (F6) at (15,1.4) {$t^\smsq \pi_0^\smsq V\otimes t^\smsq t_\smsq\mathbb{I}$};

\node (G6) at (15,0) {$t^\smsq (\pi_0^\smsq V\otimes t_\smsq\mathbb{I})$};

\draw [->] (A1) -- (A2) -- (A3) --node[above]{$\scriptstyle s^\smsq t_\smsq \alpha$} (A4);\draw(A4) -- (A5) -- (A6);

\draw [->] (B2.east) + (0,-.1) -- ($(B3.west) + (0,.1)$) (B3) --node[above]{$\scriptstyle q_\smsq p^\smsq \alpha$} (B4); \draw (B4.east) + (0,.1) -- ($(B5.west) + (0,-.15)$);

\draw (B2.south) + (1.2,0) -- (D3) (B'2.south) + (1,0) -- (D3) (C2.south) + (.8,0) -- (D3);

\draw (C1) -- (C2) (C5) -- (C6);

\draw (B5.south) + (-1.2,0) -- (C4) (B'5.west) + (0,.-.1) -- (C4) (C5.west) -- (C4) (D5.north) + (-1.1,-.05) -- (C4);

\draw (D1) -- (D2) -- (D3) (D4) -- (D5) -- (D6) (D'4) -- (D'5) -- (D'6);

\draw (D'5.south) + (-1.1,.05) -- (E4) (E5.west) -- (E4) (F5.west) + (0,.1) -- (E4) (F'5.north) + (-1.2,0) -- (E4) ;

\draw (E1) -- (E2) (E5) -- (E6);

\draw (F2.north) + (1,0) -- (D3) (F'2.north) + (1.2,0) -- (D3) (E2.north) + (.8,0) -- (D3);

\draw [->]  (F'2.east) + (0,.1) -- ($(F3.west) + (0,-.1)$) (F3) --node[above]{$\scriptstyle q_\smsq m^\smsq \alpha$} (F4); \draw (F4.east) + (0,-.1) -- ($(F'5.west) + (0,.15)$);

\draw [->] (G1) -- (G2) -- (G3) --node[above]{$\scriptstyle t^\smsq t_\smsq \alpha$} (G4);\draw(G4) -- (G5) -- (G6);

\draw (A1) -- (B1) -- (C1) -- (D1) -- (E1) -- (F1) -- (G1);

\draw (A2) -- (B2) -- (B'2) -- (C2) -- (D2) -- (E2) -- (F2) -- (F'2) -- (G2);

\draw (A3) -- (B3) -- (D3) -- (F3) -- (G3);

\draw[->] (A4) -- (B4) -- (C4) -- (D4) --node[right]{$\scriptstyle q_\smsq q^\smsq \alpha$} (D'4);\draw (D'4) -- (E4) -- (F4) -- (G4);

\draw[->] (A5) -- (B5) -- (B'5) -- (C5) -- (D5) --node[right]{$\scriptstyle q_\smsq q^\smsq (\alpha\otimes \id)$} (D'5);\draw (D'5) -- (E5) -- (F5) -- (F'5) -- (G5);

\draw[->] (A6) -- (B6) -- (C6) -- (D6) --node[right]{$\scriptstyle \alpha\otimes \mathrm{id}$} (D'6);\draw (D'6) -- (E6) -- (F6) -- (G6);

\draw [->] (A1) -- +(1.15,1) --node[above]{$\scriptstyle s^\smsq (u)$} ($(A6)+(-1.15,1)$) -- (A6);

\draw [->] (G1) -- +(1.15,-1) --node[below]{$\scriptstyle t^\smsq (u)$} ($(G6)+(-1.15,-1)$) -- (G6);

\draw [->] ($(B1.south)+(-.5,0)$) -- +(-.8,-.6) --node[left]{$\scriptstyle \mathrm{id}\otimes \lambda$} ($(F1.north)+(-1.3,.6)$) -- ($(F1.north)+(-.5,0)$);

\draw [->] ($(B6.south)+(.5,0)$) -- +(.8,-.6) --node[right]{$\scriptstyle\alpha \otimes \lambda$} ($(F6.north)+(1.3,.6)$) -- ($(F6.north)+(.5,0)$);

\end{tikzpicture}
\]
The middle rectangle commutes because $\alpha$ is an action.
The four outer rectangle are instances of item 9)
of Definition~\ref{def:threeffdata}, applied to the fibre product squares~\eqref{eq: 2SQ}.
\end{proof}

\begin{corollary}\label{last cor}
Let $G$ be a measure groupoid.
Let $H$ be a representation of $G$ on with $L^\infty(G_0)$ acts faithfully, and let $\Omega := \bigoplus^\infty \push{t}\mathbb{I}$ be the infinite direct sum of copies of the regular representation. Then $H \otimes \Omega\cong \Omega$.
\end{corollary}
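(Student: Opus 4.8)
The plan is to deduce the general statement from the constant-bundle case in Theorem~\ref{thm:fellabs} by means of a swindle, after first using a tensoring trick to make the underlying bundle of $H$ constant.

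First I would record the constant-bundle consequence of Theorem~\ref{thm:fellabs}. Since $(\pi_0^\smsq V)_{\mathrm{triv}}\cong\bigoplus^{\dim V}\mathbb I$, the theorem yields, for every Hilbert space $V$ and every action on $\pi_0^\smsq V$,
\[
\pi_0^\smsq V\otimes \push{t}\mathbb I\;\cong\;(\pi_0^\smsq V)_{\mathrm{triv}}\otimes \push{t}\mathbb I\;\cong\;\bigoplus\nolimits^{\dim V}\push{t}\mathbb I .
\]
Because $-\otimes M$ preserves direct sums in these $\mathrm{W}^*$-categories, taking $V=\ell^2$ (with the trivial action, available since the bundle is constant) gives $\pi_0^\smsq\ell^2\otimes\Omega\cong\bigoplus^\infty\bigoplus^{\aleph_0}\push{t}\mathbb I\cong\Omega$, so in particular $\Omega\cong\pi_0^\smsq\ell^2\otimes\Omega$.

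The heart of the argument is the following reduction. Faithfulness of the $L^\infty(G_0)$-action forces $H_x\neq 0$ for almost every $x$, and the fibres are separable, so each fibre of $H\otimes\pi_0^\smsq\ell^2$ is $H_x\otimes\ell^2\cong\ell^2$. Hence $H\otimes\pi_0^\smsq\ell^2$ has constant, countably infinite fibre dimension, and I would invoke the classification of measurable Hilbert bundles to identify its underlying bundle with the constant bundle $\pi_0^\smsq\ell^2$. Transporting the action $\alpha\otimes\id$ along this bundle isomorphism produces an action $\alpha'$ on $\pi_0^\smsq\ell^2$ with $H\otimes\pi_0^\smsq\ell^2\cong(\pi_0^\smsq\ell^2,\alpha')$ as representations of $G$. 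Combining the pieces,
\[
H\otimes\Omega\;\cong\;H\otimes\pi_0^\smsq\ell^2\otimes\Omega\;\cong\;(\pi_0^\smsq\ell^2,\alpha')\otimes\Omega\;\cong\;\Omega ,
\]
the last isomorphism being the constant-bundle case applied to $\alpha'$.

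The hard part will be the reduction step: I must know that a measurable field of separable Hilbert spaces over the standard measure space $G_0$ with almost-everywhere constant (countably infinite) fibre dimension is isomorphic to the constant field, and that faithfulness of the $L^\infty(G_0)$-action yields nonvanishing fibres. These are standard facts about measurable fields, but they are exactly what bridges the special case $\pi_0^\smsq V$ of Theorem~\ref{thm:fellabs} and an arbitrary faithful $H$. The only routine point to confirm alongside is that $-\otimes M$, being a normal functor of $\mathrm{W}^*$-categories, commutes with the infinite direct sum defining $\Omega$.
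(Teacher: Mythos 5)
Your proposal is correct, but it takes a genuinely different route from the paper's. The paper argues by stratification: decompose $G_0=\bigsqcup_{i\geq 1}G_0^i$ according to the fibre dimension of $H$ (faithfulness of the $L^\infty(G_0)$-action guarantees rank $\geq 1$ almost everywhere), note that this induces a decomposition $G=\bigsqcup G^i$ of the whole groupoid --- implicitly using that the unitary $\alpha\colon s^\smsq H\to t^\smsq H$ forces $\dim H_{s(g)}=\dim H_{t(g)}$, so the rank strata are invariant --- identify $H|_{G^i}$ with a representation on the trivial bundle of rank $i$, apply Theorem~\ref{thm:fellabs} on each stratum, and reassemble via direct sums. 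You instead stabilise: tensoring with $(\pi_0^\smsq\ell^2)_{\mathrm{triv}}$ makes the rank constant and countably infinite everywhere, you trivialise the underlying bundle of $H\otimes\pi_0^\smsq\ell^2$, transport the action along a (not necessarily equivariant) trivialisation $u$ to obtain $\alpha'$, apply the theorem once, and use the absorption $\Omega\cong(\pi_0^\smsq\ell^2)_{\mathrm{triv}}\otimes\Omega$. Both arguments rest on the same standard inputs, which the paper also uses tacitly (faithfulness implies nonzero fibres a.e.; measurable fields of constant rank over a standard measure space are trivial; $-\otimes M$ preserves direct sums in these $\mathrm{W}^*$-categories), so your flagged ``hard part'' is no heavier than what the paper's decomposition step already assumes. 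What your route buys is a single application of Theorem~\ref{thm:fellabs} and no need to verify invariance of the rank stratification; the cost is the transport-of-structure step, where you should record that $\alpha':=t^\smsq u\circ(\alpha\otimes\id)\circ(s^\smsq u)^{-1}$ satisfies the cocycle condition $m^\smsq\alpha'=q^\smsq\alpha'\circ p^\smsq\alpha'$, which follows from the identities $t\circ p=s\circ q$, $s\circ m=s\circ p$ and $t\circ m=t\circ q$ underlying the squares~\eqref{eq: 2SQ}. One small simplification: your intermediate step $\pi_0^\smsq\ell^2\otimes\Omega\cong\Omega$ for the \emph{trivial} action needs no appeal to the theorem at all, since $(\pi_0^\smsq\ell^2)_{\mathrm{triv}}\cong\bigoplus^{\aleph_0}\mathbb{I}$ and $\mathbb{I}$ is the monoidal unit.
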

\begin{proof}
    Decompose $G_0$ as $\bigsqcup_{i=1}^\infty G^i_0$, such that each $H|_{G^i_0}$ has rank $i$. This induces a disjoint union decomposition of the whole groupoid as $G=\bigsqcup_{i=1}^\infty G^i$. 
    On each $G^i$, the representation $H$ restricts to one whose underlying $L^\infty(G^i_0)$-module is trivial of rank $i$.
    By Theorem~\ref{thm:fellabs}, we have $H|_{G^i} \otimes \Omega|_{G^i}\cong \Omega|_{G^i}$.
    Assembling all the pieces, we get the desired isomorphism $H \otimes \Omega\cong \Omega$.
\end{proof}


\begin{thebibliography}{Hah78b}
	
	\bibitem[ADR56]{anantharaman1956amenable}
	C.~Anantharaman-Delaroche and J.~Renault.
	\newblock {\em Amenable Groupoids}.
	\newblock Monographie ... de l'Enseignement math{\'e}matique. L'Enseignemet
	Math{\'e}matique, Universit{\'e}., 1956.
	
	\bibitem[Ayo07]{Ayoub2007}
	Joseph Ayoub.
	\newblock Les six op{\'e}rations de {G}rothendieck et le formalisme des cycles
	\'{e}vanescents dans le monde motivique.
	\newblock {\em Ast{\'e}risque}, 314, 2007.
	
	\bibitem[Ber07]{Bernstein2007}
	Joseph Bernstein.
	\newblock Algebraic theory of {D}-modules.
	\newblock {\em Lecture notes available at
		{\url{http://math.uchicago.edu/~drinfeld/langlands/Bernstein/}}}, 2007.
	
	\bibitem[Bor57]{Borel1957}
	Armand Borel.
	\newblock The {P}oincaré duality in generalized manifolds.
	\newblock {\em Michigan Mathematical Journal}, 4(3), 1957.
	
	\bibitem[Bro03]{Brouwer2003}
	R.~M. Brouwer.
	\newblock A bicategorical approach to {Morita} equivalence for von {Neumann}
	algebras.
	\newblock {\em Journal of Mathematical Physics}, 44(5):2206--2214, 2003.
	
	\bibitem[CD19]{Cisinski2019}
	Denis-Charles Cisinski and Frédéric Déglise.
	\newblock {\em Triangulated Categories of Mixed Motives}.
	\newblock Springer International Publishing, 2019.
	
	\bibitem[Con94]{Connes1994}
	Alain Connes.
	\newblock {\em Noncommutative geometry}.
	\newblock Academic Press, 1994.
	
	\bibitem[Del76]{Deligne1976}
	Pierre Deligne.
	\newblock {\em Cohomologie Etale {(SGA 4$\frac{1}{2}$)}}.
	\newblock Number 569 in Lecture notes in mathematics. Springer-Verlag, 1976.
	
	\bibitem[EV00]{Enock200}
	Michel Enock and jean-michel Vallin.
	\newblock Inclusions of von neumann algebras, and quantum groupoids.
	\newblock {\em Journal of Functional Analysis - J FUNCT ANAL}, 172:249--300, 04
	2000.
	
	\bibitem[FHM03]{Fausk2003}
	H.~Fausk, P.~Hu, and J.~P May.
	\newblock Isomorphisms between left and right adjoints.
	\newblock {\em Theory and Applications of Categories}, 11, 2003.
	
	\bibitem[GR17]{Gaitsgory2017}
	Dennis Gaitsgory and Nick Rozenblyum.
	\newblock {\em A Study in Derived Algebraic Geometry}.
	\newblock American Mathematical Society, 2017.
	
	\bibitem[Gro77]{Grothendieck1977}
	Alexander Grothendieck.
	\newblock {\em Cohomologie l-adique et {F}onctions {L} (SGA 5)}.
	\newblock Number 589 in Lecture Notes in Mathematics. Springer-Verlag, 1977.
	
	\bibitem[Haa75]{Haagerup}
	Uffe Haagerup.
	\newblock The standard form of von {N}eumann algebras.
	\newblock {\em Mathematica Scandinavica}, 37:271, 1975.
	
	\bibitem[Hah78a]{Hahn1978a}
	Peter Hahn.
	\newblock Haar measure for measure groupoids.
	\newblock {\em Transactions of the American Mathematical Society}, 242:1--33,
	1978.
	
	\bibitem[Hah78b]{Hahn1978}
	Peter Hahn.
	\newblock The regular representations of measure groupoids.
	\newblock {\em Transactions of the American Mathematical Society}, 242:35--72,
	1978.
	
	\bibitem[HM24]{Heyer2024}
	Claudius Heyer and Lucas Mann.
	\newblock 6-functor formalisms and smooth representations.
	\newblock {\em ArXiV:2410.13038}, 2024.
	
	\bibitem[HNP24]{Henriques2024}
	André Henriques, Nivedita, and David Penneys.
	\newblock Complete {W}*-categories.
	\newblock {\em arXiv:2411.01678}, 2024.
	
	\bibitem[H{\"o}r18]{Hoermann2018}
	Fritz H{\"o}rmann.
	\newblock Six-functor-formalisms and fibered multiderivators.
	\newblock {\em Selecta Mathematica}, 24(4):2841--2925, 2018.
	
	\bibitem[HP23]{Henriques2023}
	Andr\'e~G. Henriques and David Penneys.
	\newblock Representations of fusion categories and their commutants.
	\newblock {\em Selecta Mathematica}, 29(38), 2023.
	
	\bibitem[Ive86]{Iversen1986}
	Birger Iversen.
	\newblock {\em Cohomology of {S}heaves}.
	\newblock Springer Berlin Heidelberg, 1986.
	
	\bibitem[Jos03]{Joshua2003}
	Roy Joshua.
	\newblock Grothendieck-{V}erdier duality in enriched symmetric monoidal
	t-categories.
	\newblock {\em Unpublished, available at
		\url{https://people.math.osu.edu/joshua.1/GV.pdf}}, 2003.
	
	\bibitem[KS90]{Kashiwara1990}
	Masaki Kashiwara and Pierre Schapira.
	\newblock {\em Sheaves on Manifolds}.
	\newblock Springer Berlin Heidelberg, 1990.
	
	\bibitem[Ler50]{Leray1950}
	Jean Leray.
	\newblock L'anneau spectral et l'anneau filtr{\'e} d'homologie d'un espace
	localement compact et d'une application continue.
	\newblock {\em Journal de Mathématiques Pures et Appliquées}, 1950.
	
	\bibitem[LH09]{Lipman2009}
	Joseph Lipman and Mitsuyasu Hashimoto.
	\newblock {\em Foundations of Grothendieck Duality for Diagrams of Schemes}.
	\newblock Springer Berlin Heidelberg, 2009.
	
	\bibitem[LZ12]{Liu2012}
	Yifeng Liu and Weizhe Zheng.
	\newblock Enhanced six operations and base change theorem for higher artin
	stacks.
	\newblock {\em ArXiV:1211.5948}, 2012.
	
	\bibitem[Mac63]{Mackey}
	George~W. Mackey.
	\newblock Ergodic theory, group theory, and differential geometry.
	\newblock {\em Proceedings of the National Academy of Sciences of the United
		States of America}, 50(6):1184--1191, 1963.
	
	\bibitem[Man22]{Mann2022}
	Lucas Mann.
	\newblock {\em A $p$-Adic 6-Functor Formalism in Rigid-Analytic Geometry}.
	\newblock PhD thesis, University of Bonn, 2022.
	
	\bibitem[Pav22]{Pavlov2022}
	Dmitri Pavlov.
	\newblock Gelfand-type duality for commutative von neumann algebras.
	\newblock {\em Journal of Pure and Applied Algebra}, 226(4):106884, 2022.
	
	\bibitem[Ram71]{RAMSAY1971253}
	Arlan Ramsay.
	\newblock Virtual groups and group actions.
	\newblock {\em Advances in Mathematics}, 6(3):253--322, 1971.
	
	\bibitem[Sau83]{Sauvageot1983}
	Jean-Luc Sauvageot.
	\newblock Sur le produit tensoriel relatif d'espaces de {H}ilbert.
	\newblock {\em Journal of Operator Theory}, 9(2):237--252, 1983.
	
	\bibitem[Sau85]{Sauvageot1985}
	Jean-Luc Sauvageot.
	\newblock Produits tensoriels de {Z} - modules et applications.
	\newblock In Huzihiro Araki, Calvin~C. Moore, Şerban-Valentin Stratila, and
	Dan-Virgil Voiculescu, editors, {\em Operator {Algebras} and their
		{Connections} with {Topology} and {Ergodic} {Theory}}, pages 468--485,
	Berlin, Heidelberg, 1985. Springer.
	
	\bibitem[Sch22]{Scholze2022}
	Peter Scholze.
	\newblock {\em Six-{F}unctor {F}ormalisms}.
	\newblock 2022.
	
	\bibitem[Tim08]{Timmermann2008}
	Thomas Timmermann.
	\newblock {\em An {I}nvitation to {Q}uantum {G}roups and {D}uality}.
	\newblock EMS Press, 2008.
	
	\bibitem[Tim12]{Timmermann2012}
	Thomas Timmermann.
	\newblock The relative tensor product and a minimal fiber product in the
	setting of {$C^*$}-algebras.
	\newblock {\em Journal of Operator Theory}, 68(2):365--404, 2012.
	
	\bibitem[Vol21]{Volpe2021}
	Marco Volpe.
	\newblock The six operations in topology.
	\newblock {\em ArXiV:2110.10212}, 2021.
	
	\bibitem[Was95]{Wassermann1995}
	Antony~J. Wassermann.
	\newblock Operator algebras and conformal field theory.
	\newblock In {\em Proceedings of the International Congress of Mathematicians},
	pages 966--979. Birkhäuser Basel, 1995.
	
\end{thebibliography}

\end{document}